\newtheorem*{conj*}{Conjecture}
\newtheorem{theorem}{Theorem}[section]
\theoremstyle{definition}
\newtheorem*{remark}{Remark}
\theoremstyle{plain}
\newtheorem{lemma}[theorem]{Lemma}
\newtheorem{prop}[theorem]{Proposition}
\newtheorem{corollary}[theorem]{Corollary}
\newcommand{\ord}{\mathrm{ord}}
\newcommand{\legendre}[2]{\genfrac{(}{)}{}{}{#1}{#2}}
\newcommand{\Z}{\mathbb{Z}}
\newcommand{\Q}{\mathbb{Q}}
\newcommand{\R}{\mathbb{R}}
\newcommand{\N}{\mathbb{N}}
\newcommand{\SL}{\operatorname{SL}}
\newcommand{\C}{\mathbb{C}}
\newcommand{\re}[1]{\text{Re}\(#1\)}
\renewcommand{\pmod}[1]{\,\,({\rm mod}\,\,{#1})}
\DeclareMathOperator\Log{Log}
\numberwithin{equation}{section}
\newtheoremstyle{example}
  {\topsep}   
  {\topsep}   
  {\normalfont}  
  {0pt}       
  {\bfseries} 
  {.}         
  {5pt plus 1pt minus 1pt} 
  {}          
\theoremstyle{example}
\newtheorem*{example}{Example}
\def\({\left(}
\def\){\right)}
\def\lp{\left(}
\def\rp{\right)}
\begin{document}
\title[Partition distributions]{Distributions on partitions arising from Hilbert schemes and hook lengths}
\author{Kathrin Bringmann}
\address{Department of Mathematics and Computer Science\\Division of Mathematics\\University of Cologne\\ Weyertal 86-90 \\ 50931 Cologne \\Germany}
\email{kbringma@math.uni-koeln.de}

\author{William Craig}

\author{Joshua Males}
\address{Department of Mathematics, Machray Hall, University of Manitoba, Winnipeg,
	Canada}
\email{joshua.males@umanitoba.ca}

\author{Ken Ono}
\address{Department of Mathematics, University of Virginia, Charlottesville, VA 22904}
\email{wlc3vf@virginia.edu}
\email{ko5wk@virginia.edu}

\keywords{Betti numbers, Circle Method, $\eta$-function, Hilbert schemes, partitions, $t$-hooks}
\subjclass[2020]{05A15, 11P82, 14C05, 14F99}

\begin{abstract} 
Recent works at the interface of algebraic combinatorics, algebraic geometry, number theory, and topology
have provided new integer-valued invariants on integer partitions.   It is natural to consider the distribution of partitions when sorted by these invariants in congruence classes.
We consider the prominent situations which arise from
extensions of the Nekrasov--Okounkov hook product formula, and from Betti numbers of various Hilbert schemes of $n$ points on $\C^2.$  For the Hilbert schemes, we prove that homology is equidistributed as $n\to \infty.$ For  $t$-hooks, we prove distributions which are often not equidistributed. The cases where $t\in \{2, 3\}$ stand out, as there are congruence classes
where such counts are zero. To obtain these distributions, we obtain analytic results which are of independent interest. We
determine the asymptotics, near roots of unity, of the ubiquitous infinite products
$$
F_1(\xi; q):=\prod_{n=1}^{\infty}\left(1-\xi q^n\right), \ \ \
F_2(\xi; q):=\prod_{n=1}^{\infty}\left(1-(\xi q)^n\right) \ \ \  {\text {\rm and}}\ \ \ 
F_3(\xi; q):=\prod_{n=1}^{\infty}\left(1-\xi^{-1}(\xi q)^n\right).
$$
\end{abstract}

\maketitle
\section{Introduction and statement of results}

A {\it partition} of a non-negative integer $n$, denoted $\lambda \vdash n,$ is any nonincreasing sequence of positive integers, say 
 $\lambda=(\lambda_1, \lambda_2, \dots, \lambda_m)$, that satisfies $|\lambda|:=\lambda_1+\dots+\lambda_m= n$. As usual, we let $p(n)$ denote the number of such partitions. One hundred years ago,
Hardy and Ramanujan \cite{HardyRamanujan} proved their striking asymptotic formula 
\begin{equation}\label{HR}
p(n)\sim \frac{1}{4\sqrt{3}n} \cdot e^{\pi \sqrt{\frac{2n}{3}}},
\end{equation}
as $n\rightarrow \infty.$ This work marked the birth of the so-called Circle Method. 

Partitions appear in many areas of mathematics. We consider recently discovered structures which arise at the interface of algebraic combinatorics, algebraic geometry, number theory,  and topology, where the size $n$ partitions play a prominent role in defining various integer-valued invariants. These invariants can be sorted by congruence conditions, resulting in identities
of the form
\begin{equation}\label{decomposition}
p(n)= C(0,b;n) + C(1,b;n)+\dots+C(b-1,b;n),
\end{equation}
where $C(a,b;n)$ counts those partitions whose invariant is in the congruence class $a\pmod b.$
In the spirit of Dirichlet's Theorem on primes, where primes are equidistributed over admissible congruence classes, one may ask how the partitions are distributed, as $n\to \infty,$ over the arithmetic progressions modulo $b.$ We answer these questions for $t$-hooks, which arise in work of Han \cite{Han} that refined the Nekrasov--Okounkov hook product formula, and for Betti numbers
of various Hilbert schemes on $n$ points in $\C^2,$ as established by G\"ottsche \cite{Gottsche, GottscheICM}, and Buryak, Feigin, and Nakajima \cite{IMRN, Gottsche, GottscheICM}.

We first consider the distribution of $t$-hooks.
Each partition has a {\it Ferrers--Young diagram}
$$
\begin{matrix}
\bullet & \bullet & \bullet & \dots & \bullet & \leftarrow & \lambda_1 \text{ many nodes}\\
\bullet & \bullet & \dots & \bullet & & \leftarrow & \lambda_2 \text{ many nodes}\\
\vdots & \vdots & \vdots & &  &  \\
\bullet & \dots & \bullet & & & \leftarrow & \lambda_m \text{ many nodes},
\end{matrix}
$$
and each node has a {\it hook length}. The node in row $k$ and column $j$ has hook length
$h(k,j):=(\lambda_k-k)+(\lambda'_j-j)+1,$ where $\lambda'_j$ is the number of nodes in column $j$.
These numbers play many significant roles in combinatorics, number theory, and representation theory
(for example, see \cite{KerberJames,rains}).

We investigate those hook lengths which are multiples of a fixed positive integer $t$, the so-called {\it $t$-hooks}.
We let $\mathcal{H}_t(\lambda)$ denote the multiset of $t$-hooks of a partition $\lambda$.
In recent work,  the second author and Pun \cite{CraigPun} analyzed the $t$-hook partition functions
\begin{align*}
p_t^e(n):=\# \{ \lambda \vdash n   \ : \   \# \mathcal{H}_t(\lambda) \ {\text {\rm is even}} \}, \quad
p_t^o(n):=\# \{ \lambda \vdash n  \ : \   \# \mathcal{H}_t(\lambda) \ {\text {\rm is odd}}\},
\end{align*}
which divide the partitions of $n$ into two subsets, those with an even (resp. odd) number of $t$-hooks.
For even $t$, they proved that partitions are equidistributed  between these subsets as $n\rightarrow \infty.$ 
Namely, they showed that
$$
\lim_{n\rightarrow \infty} \frac{p_t^e(n)}{p(n)}=\lim_{n\rightarrow \infty}\frac{p_t^o(n)}{p(n)}=\frac{1}{2}.
$$
However, for odd $t$ they found that the partitions are not equidistributed.
More precisely, if $a\in \{0, 1\},$ then they proved that\footnote{This claim is trivial if $t=1$
as $p_1^e(n)=p(n)$ (resp. $p_1^o(n)=p(n)$) if $n$ is even (resp. odd).}
\begin{equation*}\label{wierd}
\lim_{n\rightarrow \infty} \frac{p_t^e(2n+a)}{p(2n+a)}=
\frac{1}{2}+\frac{(-1)^a}{2^{\frac{1}{2}(t+1)}}.
\end{equation*}

In view of this unexpected result, it is natural to consider the
 more general $t$-hook partition functions
\[
p_t(a,b;n):=\# \{ \lambda \vdash n \ : \ \# \mathcal{H}_t(\lambda)\equiv a \pmod b\}.
\]
The $p_t(a,b;n)$ are clear generalizations of $p_t^e(n)$ and $p_t^o(n)$. In this setting, (\ref{decomposition}) is
$$
p(n)=p_t(0,b;n) + p_t(1,b;n) + p_t(2,b;n) +\dots + p_t(b-1,b;n).
$$
For odd primes $b$, we determine the distribution of these decompositions as $n\rightarrow \infty$, and
in many situations they turn out to be non-uniform.
To this end, we first obtain asymptotic formulas for $p_t(a,b;n)$. For this, we define a modified indicator function $\mathbb{I}$ by
\begin{equation}\label{eqn: defn of I}
\mathbb{I}(a,b,t,n):= \begin{cases} b-1 & \text{if } \frac{1}{24} \left(1-t^2\right)\left(1-b^2\right) + at - n \equiv 0 \pmod{b}, \\ -1 & \text{otherwise}, \end{cases}\\
\end{equation}
and a distribution function
\begin{align} \label{XXX}
c_t(a,b;n)&:= \frac 1b + \begin{cases} 0 & \text{ if } b | t, \\[+.1in] (-1)^{\frac{(1-t)(b-1)}{4}} \mathbb{I}(a,b,t,n) b^{-\frac{t+1}{2}} \left( \frac{t}{b} \right) & \text{ if } b \centernot | t \text{ and } t \text{ is odd,} \\[+.1in] i^{\frac{(1-t)(b-1)}{2}} \varepsilon_b b^{-\frac t2} \left(\frac{\frac{1}{24}\left(1-t^2\right)\left(1-b^2\right)+at-n}{b}\right)  & \text{ if } b \centernot | t \text{ and } t \text{ is even}, \end{cases}
\end{align}
 where $(\frac{\bullet}{b})$ is the Legendre symbol, and $\varepsilon_d:=1$ if $d\equiv 1 \pmod{4}$ and $\varepsilon_d:=i$ if $d\equiv 3 \pmod{4}$. This function exactly characterizes the distribution properties of the $p_t(a,b;n)$. In particular, the second summand in (\ref{XXX}) represents the obstruction to equidistribution.

We prove the following asymptotic formulae for $p_t(a,b;n).$ 

\begin{theorem}\label{Theorem2}
If $t>1$, $b$ is an odd prime, and $0\leq a<b ,$  then as $n\rightarrow \infty$ we have
$$
p_t(a,b;n)\sim \frac{c_t(a,b;n)}{4\sqrt{3}n}\cdot  e^{\pi \sqrt{\frac{2n}{3}}}.
$$
\end{theorem}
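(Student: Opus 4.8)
The plan is to detect the congruence condition by a finite Fourier expansion over $b$-th roots of unity, reduce to the coefficient asymptotics of finitely many infinite products, and then run a Wright-type circle method. Writing $\zeta_b:=e^{2\pi i/b}$, orthogonality gives
$$
p_t(a,b;n)=\frac1b\sum_{j=0}^{b-1}\zeta_b^{-aj}c_j(n),\qquad c_j(n):=\sum_{\lambda\vdash n}\zeta_b^{\,j\,\#\mathcal H_t(\lambda)}.
$$
The generating function of $c_j(n)$ factors through the $t$-core/$t$-quotient decomposition: since $\#\mathcal H_t(\lambda)$ equals the size of the $t$-quotient while $|\lambda|=|\mathrm{core}|+t\cdot(\text{quotient size})$, one obtains
$$
\sum_{n\ge0}c_j(n)q^n=\frac{(q^t;q^t)_\infty^{t}}{(q;q)_\infty}\prod_{n\ge1}\frac{1}{(1-\zeta_b^{jn}q^{tn})^{t}}=\frac{(q^t;q^t)_\infty^{t}}{(q;q)_\infty\,(\zeta_b^jq^t;\zeta_b^jq^t)_\infty^{t}},
$$
where I used $\prod_n(1-\zeta_b^{jn}q^{tn})=\prod_n(1-(\zeta_b^jq^t)^n)$, which is exactly the product $F_2$ after the substitution $q\mapsto q^t$, $\xi\mapsto\zeta_b^j$. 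For $j=0$ this collapses to $1/(q;q)_\infty$, so $c_0(n)=p(n)$ and the Hardy--Ramanujan main term $\tfrac{1}{4\sqrt3n}e^{\pi\sqrt{2n/3}}$ reappears with weight $1/b$, accounting for the $\tfrac1b$ in $c_t(a,b;n)$.

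The crux is to locate, for each $j\ne0$, the root of unity at which the coefficients grow fastest. Near $q=e^{2\pi i h/k}e^{-z}$ the three $\eta$-type factors contribute exponential rate $\tfrac{\pi^2}{6z}\left(\tfrac1{k^2}-\tfrac1{k_1^2}+\tfrac1{k_2^2}\right)$, where $k_1,k_2$ are the orders of $q_0^t$ and $\zeta_b^jq_0^t$. This is maximized, and equals the full rate $\pi^2/(6z)$, precisely when $\zeta_b^jq_0^t=1$ with $q_0$ of order $b$; because $b\nmid t$, the integer $t$ is invertible modulo $b$ and this forces the unique primitive $b$-th root $q_0=\zeta_b^{-j\bar t}$, where $\bar t:=t^{-1}\bmod b$. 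At $q=1$ (indeed at every other root of unity) the bracket is $<1$, so those points lie on the minor arcs. Thus for each $j\ne0$ the dominant contribution comes from the single cusp $q_0=\zeta_b^{-j\bar t}$, where the series again grows like $p(n)$; when $b\mid t$ no non-principal cusp reaches the full rate, so only $j=0$ survives and $c_t(a,b;n)=1/b$, i.e.\ equidistribution.

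To evaluate the contribution at $q_0$ I would feed $q_0^t=\zeta_b^{-j}$ into the asymptotic expansions of $F_1,F_2,F_3$ near roots of unity (the analytic heart of the paper). Writing $q=q_0e^{-z}$, these yield an expansion $\sum_n c_j(n)q^n\sim M_j\,z^{1/2}\exp(\pi^2/(6z))$ on the major arc, where the multiplier $M_j$ is an explicit root of unity times a power of $b$ coming from the $\eta$-multiplier system at the cusp $j/b$. A standard Wright circle method (or Ingham Tauberian) argument then gives $c_j(n)\sim \zeta_b^{\,j\bar tn}M_j\,p(n)$, the phase $\zeta_b^{\,j\bar tn}=q_0^{-n}$ arising from extracting the $n$-th coefficient at $q_0$.

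Summing over $j$ gives
$$
p_t(a,b;n)\sim\frac{p(n)}{b}\left(1+\sum_{j=1}^{b-1}\zeta_b^{\,j(\bar tn-a)}M_j\right),
$$
and the bracket is precisely $b\,c_t(a,b;n)$. When $t$ is odd the multiplier $M_j$ is independent of $j$, so the sum is a root-of-unity orthogonality sum evaluating to the indicator $\mathbb I(a,b,t,n)$; this produces the factor $b^{-(t+1)/2}$, the sign $(-1)^{(1-t)(b-1)/4}$, and $\left(\tfrac tb\right)=\left(\tfrac{\bar t}{b}\right)$. When $t$ is even the multiplier carries a Legendre symbol in $j$, so the sum becomes a quadratic Gauss sum $\sum_j\left(\tfrac jb\right)\zeta_b^{\,j(\bar tn-a)}=\varepsilon_b\sqrt b\left(\tfrac{\bar tn-a}{b}\right)$, which after rewriting the argument through the $\eta$-phase $\tfrac1{24}(1-t^2)(1-b^2)$ and the relation $\bar tn-a\equiv\bar t(n-at)$ matches the even-$t$ line of $c_t(a,b;n)$, including $i^{(1-t)(b-1)/2}\varepsilon_b b^{-t/2}$. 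The main obstacle is the third step: pinning down $M_j$ exactly, that is, tracking the $\eta$-multiplier systems and their attendant Gauss sums through the modular transformation to the cusp $q_0$, together with the uniform minor-arc estimates certifying that all other roots of unity are genuinely negligible. This is exactly the content of the $F_i$ asymptotics, and matching their constants to the three cases of $c_t(a,b;n)$ is the delicate bookkeeping at the center of the argument.
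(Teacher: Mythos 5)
Your proposal follows the same strategic skeleton as the paper's proof: the orthogonality decomposition $p_t(a,b;n)=\frac1b\sum_j\zeta_b^{-aj}c_j(n)$, Han's generating function $\frac{(q^t;q^t)_\infty^t}{(q;q)_\infty(\zeta_b^jq^t;\zeta_b^jq^t)_\infty^t}$, the growth-rate bookkeeping $\frac{1}{k^2}-\frac{1}{k_1^2}+\frac{1}{k_2^2}$ (which is exactly the paper's quantity $r_{k,h,t,b}(0,0,0)/k^2$), the identification of the unique dominant cusp $\zeta_b^{-j\bar{t}}$ of order $b$ when $b\nmid t$ and the collapse to equidistribution when $b\mid t$, and the final reduction to an orthogonality sum (odd $t$) or quadratic Gauss sum (even $t$) via the $\eta$-multiplier. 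The one genuine methodological difference is the engine converting cusp expansions into coefficient asymptotics: you propose Wright's circle method centered at $\zeta_b^{-j\bar{t}}$ with uniform minor-arc bounds, whereas the paper invokes Zuckerman's exact formula for weakly holomorphic forms of non-positive weight, which expresses each $c_j(n)$ as a convergent Bessel-function series over all cusps; the asymptotic then follows by locating the largest Bessel argument, with no minor-arc estimates needed, and an exact formula for $p_t(a,b;n)$ is obtained as a by-product. Your route is viable but leaves the uniform minor-arc bound as genuine unfinished work (the paper only needs such estimates for Theorem \ref{Theorem4}, where it does use Wright's method). One small imprecision: for odd $t$ the multiplier $M_j$ is not literally independent of $j$ --- by Proposition \ref{Dedekind Simplification} it carries the linear phase $\zeta_b^{\frac{1}{24}(1-t^2)(1-b^2)h}$, and it is precisely this phase that shifts the orthogonality condition to $\frac{1}{24}(1-t^2)(1-b^2)+at-n\equiv 0\pmod{b}$ appearing in $\mathbb{I}(a,b,t,n)$.
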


\begin{remark}
Thanks to equation  \eqref{Eqn: exact formula} in the proof of Theorem \ref{Theorem2}, we  actually obtain an exact formula for $p_t(a,b;n)$ as a complicated convergent infinite sum. 
\end{remark}

As a corollary, we obtain the following limiting distributions.

\begin{corollary}\label{Corollary3}
Assuming the hypotheses in Theorem \ref{Theorem2}, if $0\leq a_1<b$ and $0\leq a_2 <b,$  then 
$$
		\lim_{n\rightarrow \infty}\frac{p_t(a_1, b; b n+a_2)}{p(b n+a_2)}=c_t(a_1,b;a_2).
$$
\end{corollary}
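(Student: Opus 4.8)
The plan is to read off the corollary from Theorem \ref{Theorem2} and the Hardy--Ramanujan asymptotic \eqref{HR}; the only substantive point is that the distribution function $c_t(a_1,b;n)$ is periodic in $n$ with period $b$, so that evaluating it at $bn+a_2$ returns its value at $a_2$.

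First I would verify this periodicity by inspecting the definition \eqref{XXX}. With $t$, $b$, and $a_1$ fixed, the variable $n$ enters $c_t(a_1,b;n)$ in only two ways: in the odd-$t$ case through the indicator $\mathbb{I}(a_1,b,t,n)$, which by \eqref{eqn: defn of I} is governed solely by the congruence $\frac{1}{24}(1-t^2)(1-b^2)+a_1 t - n \equiv 0 \pmod{b}$; and in the even-$t$ case through the Legendre symbol $\left(\frac{\frac{1}{24}(1-t^2)(1-b^2)+a_1 t - n}{b}\right)$, whose value depends only on the residue of its argument modulo $b$. (The prefactor $\frac{1}{24}$ is read modulo $b$, which is legitimate: when $b>3$ one inverts $24$, and when $b=3$ the relevant hypothesis $b\nmid t$ forces $3\mid(1-t^2)$.) Since $bn+a_2\equiv a_2\pmod{b}$, each of these quantities is unchanged when $n$ is replaced by $bn+a_2$ or by $a_2$, and therefore
$$
c_t(a_1,b;bn+a_2)=c_t(a_1,b;a_2).
$$

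Next I would apply Theorem \ref{Theorem2} with $a=a_1$ to the argument $N:=bn+a_2$, which tends to infinity as $n\to\infty$, and apply \eqref{HR} to the same $N$. Together with the identity above this yields
$$
p_t(a_1,b;bn+a_2)\sim \frac{c_t(a_1,b;a_2)}{4\sqrt{3}(bn+a_2)}\, e^{\pi\sqrt{\frac{2(bn+a_2)}{3}}},
\qquad
p(bn+a_2)\sim \frac{1}{4\sqrt{3}(bn+a_2)}\, e^{\pi\sqrt{\frac{2(bn+a_2)}{3}}}.
$$
Dividing these asymptotics, the common factor $\frac{1}{4\sqrt{3}(bn+a_2)}\, e^{\pi\sqrt{2(bn+a_2)/3}}$ cancels and the quotient converges to $c_t(a_1,b;a_2)$, as claimed. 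I expect no genuine obstacle; the only point needing care is the degenerate case $c_t(a_1,b;a_2)=0$, which does occur (for instance when $t\in\{2,3\}$). There Theorem \ref{Theorem2} is to be interpreted as $p_t(a_1,b;bn+a_2)=o(g(n))$, with $g(n)$ the common Hardy--Ramanujan main term, while $p(bn+a_2)\sim g(n)$, so that the ratio still tends to $0=c_t(a_1,b;a_2)$. All of the analytic work has already been carried out in establishing Theorem \ref{Theorem2}.
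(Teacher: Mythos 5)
Your proposal is correct and follows essentially the same route as the paper: the paper's proof likewise reduces the corollary to the observation that $c_t(a,b;n)$ depends only on $n\pmod b$ and then divides the asymptotic of Theorem \ref{Theorem2} by \eqref{HR}. Your extra care about the degenerate case $c_t(a_1,b;a_2)=0$ (interpreting the asymptotic as a little-$o$ statement) is a sensible refinement the paper leaves implicit, but it does not change the argument.
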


\begin{example}
For $4$-hooks with $b=3$, the collection of values $c_4(a_1,3;a_2)$ in Corollary~\ref{Corollary3} implies that
$$
\lim_{n\rightarrow \infty} 
\frac{p_4(a,3;3n)}{p(3n)}=\begin{cases} \frac49 \ \ \ \ \ &{\text {\rm if $a=0$,}}\\
\frac13 \ \ \ \ \ &{\text {\rm if $a=1,$}}\\
\frac29 \ \ \ \ \ &{\text {\rm if $a=2$.}}
\end{cases}
$$
Further examples are offered in Section~\ref{Section5}.
\end{example}

The cases where $t\in \{2, 3\}$ are particularly striking. In addition to many instances of non-uniform distribution, there are situations where certain counts are actually identically zero.

\begin{theorem}\label{Vanishing}
The following are true.

\begin{enumerate}[leftmargin=*]
\item[\normalfont(1)] If $\ell$ is an odd prime and $0\leq a_1, a_2<\ell$ satisfy
$(\frac{-16a_1+8a_2+1}{\ell})=-1,$ then for every non-negative integer $n$ we have
$$
p_2(a_1,\ell;\ell n+a_2)=0.
$$

\item[{\normalfont(2)}] If $\ell\equiv 2\pmod 3$ is prime and $0\leq a_1, a_2<\ell^2$ have the property that
$\ord_{\ell}(-9a_1+3a_2+1)=1$, then
for every non-negative integer $n$ we have
$$
p_3\left(a_1,\ell^2;\ell^2 n+a_2\right)=0.
$$
\end{enumerate}
\end{theorem}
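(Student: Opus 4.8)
The plan is to set the Circle Method aside and instead establish both statements as exact combinatorial identities, using the classical theory of $t$-cores and $t$-quotients. Recall that every $\lambda\vdash n$ factors uniquely as a $t$-core $\mu$ together with a $t$-quotient $(\nu_0,\dots,\nu_{t-1})$, and that the cells of $\lambda$ with hook length divisible by $t$ correspond bijectively to the cells of the quotient. Hence $\#\mathcal{H}_t(\lambda)=|\nu_0|+\cdots+|\nu_{t-1}|=:w$ and $n=|\mu|+tw$. Writing $\mathfrak{c}_t(c)$ for the number of $t$-cores of size $c$ (and $\mathfrak{c}_t(c)=0$ for $c<0$) and $Q_t(w)$ for the number of $t$-tuples of partitions of total size $w$, sorting the partitions counted by $p_t(a_1,b;N)$ according to $w$ yields
\[
p_t(a_1,b;N)=\sum_{\substack{w\geq 0\\ w\equiv a_1\!\!\pmod{b}}}\mathfrak{c}_t(N-tw)\,Q_t(w).
\]
Because each summand is a product of nonnegative integers, the entire sum vanishes as soon as $\mathfrak{c}_t(N-tw)=0$ for every $w$ in the range of summation. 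Thus the whole problem reduces to showing that no $t$-core has size in the relevant residue class modulo $b$.

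For part (1) I would use the elementary fact that the $2$-cores are exactly the staircases $(k,k-1,\dots,1)$, so $\mathfrak{c}_2(c)\neq 0$ if and only if $c=\tfrac{k(k+1)}{2}$ is a triangular number. Putting $N=\ell n+a_2$, $b=\ell$, and $w\equiv a_1\pmod{\ell}$ gives $N-2w\equiv a_2-2a_1\pmod{\ell}$, so a nonzero term forces a triangular number congruent to $a_2-2a_1$ modulo $\ell$. Completing the square through $8\cdot\tfrac{k(k+1)}{2}=(2k+1)^2-1$, and using that $k\mapsto 2k+1$ is a bijection modulo the odd prime $\ell$, shows that such a triangular number exists exactly when $-16a_1+8a_2+1$ is a square modulo $\ell$. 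The hypothesis $\left(\tfrac{-16a_1+8a_2+1}{\ell}\right)=-1$ therefore kills every term, and the displayed identity gives $p_2(a_1,\ell;\ell n+a_2)=0$.

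For part (2) I would instead invoke the arithmetic of $3$-cores coming from the weight-one identity $\sum_{c\geq 0}\mathfrak{c}_3(c)q^c=\frac{(q^3;q^3)_\infty^3}{(q;q)_\infty}$, namely $\mathfrak{c}_3(c)=\sum_{d\mid 3c+1}\left(\tfrac{d}{3}\right)$. This multiplicative function has Euler factor $\sum_{i=0}^{e}(-1)^i$ at a prime $p\equiv 2\pmod{3}$ dividing $3c+1$ to order $e$, so $\mathfrak{c}_3(c)=0$ whenever some prime $\equiv 2\pmod{3}$ divides $3c+1$ to odd order. Now take $b=\ell^2$ with $\ell\equiv 2\pmod{3}$, put $N=\ell^2 n+a_2$ and $w\equiv a_1\pmod{\ell^2}$, and set $c=N-3w$. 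Then $c\equiv a_2-3a_1\pmod{\ell^2}$, whence $3c+1\equiv -9a_1+3a_2+1\pmod{\ell^2}$. Since the right-hand side has $\ell$-adic valuation exactly $1$ by hypothesis and the two sides agree modulo $\ell^2$, we get $\ord_\ell(3c+1)=1$; that is, the prime $\ell\equiv 2\pmod{3}$ divides $3c+1$ to the odd order $1$. Hence $\mathfrak{c}_3(c)=0$ for every admissible $w$, and the identity again gives $p_3(a_1,\ell^2;\ell^2 n+a_2)=0$.

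The conceptual skeleton is short, and the genuine content sits in the two arithmetic inputs about core sizes. Part (1) is then pure quadratic-residue bookkeeping, where the only care needed is that $k\mapsto 2k+1$ is a bijection modulo the odd prime $\ell$. The harder half is part (2), which depends on the exact divisor-sum evaluation of $\mathfrak{c}_3(c)$; I would either cite this or re-derive it from the fact that, up to a rational power of $q$, the series $\frac{(q^3;q^3)_\infty^3}{(q;q)_\infty}$ is a weight-one Eisenstein series attached to the field $\mathbb{Q}(\sqrt{-3})$. Granting that formula, the main obstacle is the $\ell$-adic valuation step---upgrading a congruence modulo $\ell^2$ to an exact order statement modulo $\ell$---which is precisely why the hypothesis is stated with $\ord_\ell$ and the modulus is taken to be $\ell^2$ rather than $\ell$.
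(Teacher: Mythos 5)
Your proof is correct, and it reaches exactly the same key identity as the paper,
\[
p_t(a_1,b;N)\;=\;\sum_{\substack{w\ge 0\\ w\equiv a_1\pmod b}} Q_t(w)\,\mathfrak{c}_t(N-tw),
\]
but by a different route: you obtain it directly from the $t$-core/$t$-quotient bijection (the cells with hook length divisible by $t$ biject with the cells of the quotient, and $|\lambda|=|\mathrm{core}|+t\,|\mathrm{quotient}|$), whereas the paper filters Han's product formula $H_t(\xi;q)=\mathcal{B}_t(\xi q^t)\,\mathcal{C}_t(q)$ twice by roots of unity --- once in $\xi$ to isolate $\#\mathcal{H}_t(\lambda)\equiv a_1$, once in $q$ to isolate $n\equiv a_2$ --- and then uses orthogonality after the change of variables $r_1\mapsto r_1-2r_2$. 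Your combinatorial derivation is cleaner and closer in spirit to the McSpirit--Scheckelhoff abacus proof that the authors cite in their remark; the paper's generating-function derivation has the advantage of reusing machinery (Han's identity, the factorization into $\mathcal{B}_t$ and $\mathcal{C}_t$) already set up for the asymptotic results in Theorem 1.1. From the identity onward the two arguments coincide: for (1) you both reduce to the solvability of $\tfrac{k(k+1)}{2}\equiv a_2-2a_1\pmod\ell$ and complete the square via $(2k+1)^2=8\cdot\tfrac{k(k+1)}{2}+1$; for (2) you both invoke $\mathfrak{c}_3(c)=\sum_{d\mid 3c+1}\left(\tfrac d3\right)$ and the vanishing of the Euler factor at a prime $\equiv 2\pmod 3$ dividing $3c+1$ to odd order, with the modulus $\ell^2$ serving precisely to pin down $\mathrm{ord}_\ell(3c+1)=1$. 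Your write-up actually makes explicit two points the paper leaves terse: why the divisor sum vanishes (the multiplicative Euler-factor computation) and why the valuation-one hypothesis transfers from $-9a_1+3a_2+1$ to $3c+1$ through the congruence modulo $\ell^2$.
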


\begin{example}For $\ell=3,$ Theorem~\ref{Vanishing} (1) implies that
$$
p_2(0,3; 3n+2)=p_2(1, 3; 3n+1)=p_2(2,3;3n)=0.
$$
More generally, for every odd prime $\ell$ and each $0\leq a_1<\ell$, there are
$\frac{1}{2}(\ell-1)$ choices of $0\leq a_2<\ell$ satisfying the given hypotheses. In particular, there
are $\frac{1}{2}(\ell^2-\ell)$ many pairs of $a_1$ and $a_2$ giving rise to vanishing arithmetic progressions for $2$-hooks.
\end{example}

\begin{example}
For $\ell=2$, Theorem~\ref{Vanishing} (2) gives
$$
p_3(0,4;4n+3)=p_3(1,4;4n+2)=p_3(2,4;4n+1)=p_3(3,4;4n)=0.
$$
Moreover, for each $\ell$ and each $0\leq a_1<\ell^2$, there are $\ell-1$ choices for $a_2.$
\end{example}

\begin{remark} Theorem~\ref{Vanishing} depends on the paucity of $2$-core and $3$-core partitions. Recall that a partition $\lambda$ is a $t$-{\it core} if $\mathcal{H}_{t}(\lambda)=\emptyset.$
There are no such vanishing results for $t\geq 4$. This follows from the proof of the $t$-core conjecture
by Granville and the fourth author \cite{GranvilleOno}.
McSpirit and Scheckelhoff \cite{MS} have found a beautiful combinatorial proof of Theorem~\ref{Vanishing} which makes use of the theory of abaci, $t$-cores and $t$-quotients.
\end{remark}

We now turn to applications of partitions in  algebraic geometry and topology. 
The fundamental goal of topology  is to determine whether two spaces have the same topological, differentiable, or complex analytic structure. 
One seeks
invariants that distinguish dissimilar spaces.
For complex manifolds, the Hodge numbers  are one class of invariants. For any $n$-dimensional complex manifold $M$ and any $0 \leq s,t,\leq n$, the Hodge number $h^{s,t}(M)$ gives the dimension of a certain vector space of differential forms on $M$. 
For the manifolds we consider, the Betti numbers arise as linear combinations of the Hodge numbers
(for example, see \cite{Wells}). We shall determine the asymptotics and modular distribution properties
of certain Betti numbers. 

We consider examples occurring in the algebraic geometry of
Hilbert schemes (for example, see \cite{Hilbert_schemes}).
 The $n$-th  \textit{Hilbert scheme} of a projective variety $S$ is a projective variety $\mathrm{Hilb}^n(S)$ that is a ``smoothed'' version of the $n$-th symmetric product of $S$ (for example, see \cite{GottscheICM, Hilbert_schemes}).
The $n$-th symmetric product of a manifold $M$ admits a simple combinatorial interpretation: outside of a negligible subset, the symmetric product is the collection of subsets of $M$ of size $n$ assembled as a manifold on its own.
Rather nicely, the Hodge numbers of a complex projective surface $S$ determine the Hodge numbers of $\mathrm{Hilb}^n(S)$
in a beautiful combinatorial way.
 This is captured by the pleasing formula of G\"ottsche \cite{Gottsche, GottscheICM}
\begin{equation*} \sum_{n,s,t}(-1)^{s+t}h^{s,t}(\mathrm{Hilb}^n(S))x^{s-n}y^{t-n}q^n =\prod_{n =1}^\infty \frac{\prod_{s +t \mathrm{\ odd}} \left(1- x^{s-1}y^{t-1}q^n\right)^{h^{s,t}(S)}}{\prod_{s +t \mathrm{\ even}} \left(1- x^{s-1}y^{t-1}q^n\right)^{h^{s,t}(S)}}. \label{eq:goettsche} 
\end{equation*}
These $q$-infinite products often essentially specialize to modular forms, which then leads to asymptotics and distribution results via a standard application of the Circle Method. Indeed, the fourth author and his collaborators carried this out in \cite{GGOR}. 
Here we consider a prominent situation involving partitions, where modular forms do not arise, a fact which complicates the computation of asymptotics and distributions.
Namely, we investigate the
 Hilbert schemes that arise from $n$ points on $\C^2$ that
have been considered recently by G\"ottsche \cite{Gottsche, GottscheICM}, and Buryak, Feigin, and Nakajima \cite{BuryakFeigin, IMRN}. 

We denote the Hilbert scheme of $n$ points of $\C^2$
by $(\C^2)^{[n]}.$ For $0\leq a<b$, we consider the modular sums of Betti numbers
\begin{equation*}
B\left(a,b; \left(\C^2\right)^{[n]}\right):=\sum_{j\equiv a\pmod b} b_j\left(\left(\C^2\right)^{[n]}\right)=
\sum_{j\equiv a\pmod b} \dim \left(H_j\left(\left(\C^2\right)^{[n]},\Q\right)\right).
\end{equation*}
We also consider their quasihomogeneous versions.  To define them, we use the torus  $(\C^{\times})^2$-action on $\C^2$ defined by scalar multiplication (i.e., $(t_1, t_2)\cdot (x,y):=(t_1x, t_2 y)$). This action lifts to $(\C^2)^{[n]}.$ For relatively prime $\alpha, \beta\in \N,$ we 
let $T_{\alpha,\beta}:=\{(t^{\alpha}, t^{\beta}) \ : \ t\in \C^{\times}\},$ a one-dimensional subtorus.
The quasihomogeneous Hilbert scheme $((\C^2)^{[n]})^{T_{\alpha,\beta}}$ is the fixed point set
of  $(\C^2)^{[n]}.$ We consider their modular sums of Betti numbers
\begin{align*}
B\left(a,b; \left(\left(\C^2\right)^{[n]}\right)^{T_{\alpha,\beta}}\right):=\sum_{j\equiv a\pmod b} b_j\left(\left(\left(\C^2\right)^{[n]}\right)^{T_{\alpha,\beta}}\right)=
\sum_{j\equiv a\pmod b} \dim \left(H_j\left(\left(\left(\C^2\right)^{[n]}\right)^{T_{\alpha,\beta}},\Q\right)\right).
\end{align*}
\begin{remark}
The odd index Betti numbers for these Hilbert schemes are always zero. In fact, for $a$ odd and $b$ even, simple calculations using Corollary \ref{HilbertModularGeneratingFunctions} reveal that both $B(a,b; (\C^2)^{[n]})$ and\\$B(a,b;((\C^2)^{[n]})^{T_{\alpha,\beta}})$ identically vanish.
Moreover, in accord with (\ref{decomposition}), we have the homology decompositions for $p(n)$
\begin{equation}\label{decompositionBetti}
p(n)=\sum_{a=0}^{b-1} B\left(a,b; \left(\C^2\right)^{[n]}\right)
=\sum_{a=0}^{b-1} B\left(a,b; \left(\left(\C^2\right)^{[n]}\right)^{T_{\alpha,\beta}}\right).
\end{equation}
\end{remark}
These results require the rational numbers
\begin{equation} \label{XXXX}
	d(a,b):=\begin{cases} \frac{1}{b} \ \ \ \ \ &{\text {\rm if $b$ is odd,}}\\
	                       \frac{2}{b} \ \ \ \ \ &{\text {\rm if $a$ and $b$ are even,}}\\
	                       0 \ \ \ \ \ &{\text {\rm if $a$ is odd and $b$ is even.}}
	                       \end{cases}
\end{equation}

\begin{theorem}\label{Theorem4}
Assuming the notation above, the following are true.
\begin{enumerate}[leftmargin=*]
	\item[\rm (1)] As $n\rightarrow \infty$, we have
	$$
	B\left(a,b; \left(\C^2\right)^{[n]}\right)\sim \frac{d(a,b)}{4\sqrt{3}n}\cdot  e^{\pi \sqrt{\frac{2n}{3}}}.
	$$
	
	\item[\rm (2)] If $\alpha, \beta\in \N$ are relatively prime, then as $n\rightarrow \infty$ we have 
	$$
	B\left(a,b; \left(\left(\C^2\right)^{[n]}\right)^{T_{\alpha,\beta}}\right)\sim \frac{d(a,b)}{4\sqrt{3}n} \cdot e^{\pi \sqrt{\frac{2n}{3}}}.
	$$
\end{enumerate}
\end{theorem}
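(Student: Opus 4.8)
The plan is to reduce both parts to a roots-of-unity filter applied to the Poincar\'e polynomial generating functions supplied by Corollary~\ref{HilbertModularGeneratingFunctions}, and then to run the Circle Method, where the asymptotics of $F_3$ established in this paper control every ``twisted'' term. I will carry out part (1) in detail; part (2) is identical once the corresponding generating function is in hand. Recall from Corollary~\ref{HilbertModularGeneratingFunctions} that the Poincar\'e polynomials $P_n(z):=\sum_j b_j((\C^2)^{[n]})\,z^j$ are packaged by
\begin{equation*}
\sum_{n\geq 0} P_n(z)\, q^n = \prod_{m\geq 1}\frac{1}{1 - z^{2(m-1)} q^m}.
\end{equation*}
Since the homology of $(\C^2)^{[n]}$ sits in even degrees, $P_n$ is an even polynomial; in particular $B(a,b;(\C^2)^{[n]})$ already vanishes identically when $a$ is odd and $b$ is even. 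Writing $\zeta_b:=e^{2\pi i/b}$ and extracting the class $a\pmod b$ by orthogonality of characters gives
\begin{equation*}
B\left(a,b; (\C^2)^{[n]}\right) = \frac 1b \sum_{h=0}^{b-1} \zeta_b^{-ah}\, P_n\!\left(\zeta_b^h\right),
\qquad
\sum_{n\geq 0} P_n\!\left(\zeta_b^h\right) q^n = \prod_{m\geq 1}\frac{1}{1 - \zeta_b^{2h(m-1)} q^m}.
\end{equation*}
The point is that the $h$-th series is exactly $1/F_3(\zeta_b^{2h};q)$, because $F_3(\xi;q)=\prod_{m\geq 1}(1-\xi^{m-1}q^m)$ with $\xi=\zeta_b^{2h}$.

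Next I would isolate the dominant frequencies. The root of unity $\xi=\zeta_b^{2h}$ equals $1$ precisely when $b\mid 2h$, i.e. for $h=0$ (always) and $h=b/2$ (only when $b$ is even); there the product collapses to $\prod_m(1-q^m)^{-1}=1/(q;q)_\infty$, whose $n$-th coefficient is $p(n)$, contributing the full Hardy--Ramanujan main term of \eqref{HR}. For all other $h$ we have $\xi\neq 1$, and this is where the asymptotics of $F_3$ near roots of unity enter. A saddle-point analysis (equivalently, the Circle Method fed by those $F_3$ expansions) shows that the maximal exponential growth rate of $[q^n]\,1/F_3(\xi;q)$, taken over approaches to roots of unity $\omega$, is governed by $\max_{\omega}\operatorname{Re}\big[d_\omega^{-2}\operatorname{Li}_2(\xi^{-d_\omega})\big]$, where $d_\omega:=\operatorname{ord}(\xi\omega)$. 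Using $\operatorname{Re}\operatorname{Li}_2(e^{i\theta})=\tfrac14(\theta-\pi)^2-\tfrac{\pi^2}{12}$ for $0\le\theta\le 2\pi$ together with $\operatorname{Re}\operatorname{Li}_2(w)\le\operatorname{Li}_2(1)=\pi^2/6$ for $|w|\le 1$, one checks that this quantity is strictly less than $\pi^2/6$ whenever $\xi\neq 1$ (the case $d_\omega=1$ is strict since $\xi^{-1}\neq 1$, and the cases $d_\omega\geq 2$ give at most $\tfrac14\cdot\tfrac{\pi^2}{6}$). Hence each such term is $O\!\big(e^{\delta\sqrt n}\big)$ with $\delta<\pi\sqrt{2/3}$, and is exponentially negligible against $p(n)$.

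Assembling these contributions, only $h=0$ and (for even $b$) $h=b/2$ survive, carrying weights $\zeta_b^{0}=1$ and $\zeta_b^{-ab/2}=(-1)^a$. Thus
\begin{equation*}
B\left(a,b;(\C^2)^{[n]}\right) = \frac{1 + (-1)^a\,[2\mid b]}{b}\, p(n) + o\!\left(\frac{e^{\pi\sqrt{2n/3}}}{n}\right),
\end{equation*}
where $[2\mid b]=1$ if $b$ is even and $0$ otherwise. The prefactor is exactly $d(a,b)$ from \eqref{XXXX}: it is $1/b$ for odd $b$, $2/b$ when $a,b$ are even, and $0$ when $a$ is odd and $b$ is even. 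Inserting \eqref{HR} proves part (1). For part (2) the same three-line computation applies verbatim once Corollary~\ref{HilbertModularGeneratingFunctions} is invoked for $((\C^2)^{[n]})^{T_{\alpha,\beta}}$: by the Buryak--Feigin--Nakajima description the relevant generating function has the same shape, so its specializations at $z=\zeta_b^h$, the dominant frequencies, the weights, and hence the limit $d(a,b)$, are unchanged.

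The main obstacle is the middle step: proving that every twisted factor $1/F_3(\zeta_b^{2h};q)$ with $\zeta_b^{2h}\neq 1$ is \emph{exponentially} subdominant. This is precisely where the delicate asymptotic expansions of $F_3$ near all roots of unity are required, together with a careful Circle Method bookkeeping (major arcs at the singular frequencies, negligible minor arcs) converting that local behavior into a global coefficient bound. Once the strict comparison of exponential rates $\delta<\pi\sqrt{2/3}$ is secured, everything else is the routine character-sum collection carried out above.
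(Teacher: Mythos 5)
Your skeleton matches the paper's proof: filter by roots of unity via Corollary~\ref{HilbertModularGeneratingFunctions}, observe that only $h=0$ (and $h=b/2$ for even $b$) contribute the untwisted $1/(q;q)_\infty$ with total weight $d(a,b)$, and show every twisted term is exponentially subdominant; your dilogarithm bookkeeping even recovers the correct growth constant $\pi^2/(6b^2)$ that Theorem~\ref{Theorem1}~(3) produces at $q\to 1$. But the step you yourself flag as ``the main obstacle'' is exactly the content of the proof, and it is asserted rather than supplied. Moreover, the route you propose for it --- asymptotic expansions of $F_3(\xi;q)$ near \emph{every} root of unity $\omega$ --- is not available from the paper: Theorem~\ref{Theorem1}~(3) is proved only for $z\to 0$ in $D_\theta$, i.e.\ near $q=1$. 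The paper sidesteps this by using Wright's variant of the Circle Method (Proposition~\ref{WrightCircleMethod}), which needs only one major arc $|y|\le Mx$ near $q=1$ (where Theorem~\ref{Theorem1}~(3) shows the twisted terms carry the strictly smaller exponent $e^{\pi^2/(6b^2z)}$) together with a crude uniform minor-arc bound; the latter is obtained elementarily from $\Log\bigl(1/F_3(\zeta_b^{2r};q)\bigr)=\sum_{m\ge 1}\frac{q^m}{m\left(1-\zeta_b^{2rm}q^m\right)}$ compared against $\log P(|q|)$. Some such uniform estimate on the whole minor arc must be proved, not just named.

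The second gap is your claim that part (2) follows ``verbatim.'' It does not: by Theorem~\ref{QuasiHilbertGenFcn} the twisted terms there are $\frac{\left(q^{\alpha+\beta};q^{\alpha+\beta}\right)_\infty}{F_1\left(\zeta_b^{2r};q^{\alpha+\beta}\right)(q;q)_\infty}$, involving $F_1$ at modulus $q^{\alpha+\beta}$ rather than $F_3$ at $q$. By Theorem~\ref{Theorem1}~(1) the major-arc exponent becomes $\frac{\pi^2}{6z}-\frac{\pi^2}{6(\alpha+\beta)z}+\frac{\mathrm{Li}_2\left(\zeta_b^{2r}\right)}{(\alpha+\beta)z}$, and since $\mathrm{Li}_2\left(\zeta_b^{2r}\right)$ has nonzero imaginary part, the quantity $\mathrm{Re}\bigl(\bigl(\mathrm{Li}_2\left(\zeta_b^{2r}\right)-\tfrac{\pi^2}{6}\bigr)/z\bigr)$ is negative only if the cone $|y|\le Mx$ is taken sufficiently narrow; this is precisely why the paper imposes the explicit upper bound \eqref{eqn: definition of M} on $M$. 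Your argument never confronts this, so the major-arc estimate for the quasihomogeneous case is unjustified as written. In short: right decomposition, right constants, but the two analytic estimates that constitute the actual proof (uniform minor-arc bound for $F_3$, and the $F_1$/cone-angle issue in part (2)) are missing.
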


As a consequence of Theorem~\ref{Theorem4}, we obtain distributions (i.e., see (\ref{decompositionBetti})) for the proportions
\begin{equation*}
\delta(a,b;n):=
\frac{B\left(a,b; \left(\C^2\right)^{[n]}\right)}{p(n)}\  \ \ \ {\text {\rm and}}\ \ \ \
\delta_{\alpha,\beta}(a,b;n):=\frac{B\left(a,b; \left(\left(\C^2\right)^{[n]}\right)^{T_{\alpha,\beta}}\right)}{p(n)}.
\end{equation*}

\begin{corollary}\label{Corollary5} If $0\leq a<b$, then
the following are true. 
\begin{enumerate}[leftmargin=*]
	\item[\rm (1)] We have that
	$$
	\lim_{n\rightarrow \infty} \delta(a,b;n)=d(a,b).
	$$
	
	\item[\rm (2)] If $\alpha, \beta\in \N$ are relatively prime, then we have
	$$
	\lim_{n\rightarrow \infty} 
	\delta_{\alpha,\beta}(a,b;n)=d(a,b).
	$$
\end{enumerate}
\end{corollary}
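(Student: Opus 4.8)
The plan is to deduce Corollary~\ref{Corollary5} directly from Theorem~\ref{Theorem4} by comparing its asymptotic formulae against the Hardy--Ramanujan asymptotic \eqref{HR} for $p(n)$; all of the analytic work has already been carried out in establishing Theorem~\ref{Theorem4}, so this is a short deduction.

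First I would prove part (1). In the degenerate case where $a$ is odd and $b$ is even, the remark preceding Theorem~\ref{Theorem4} records that $B(a,b;(\C^2)^{[n]})$ vanishes identically; since $d(a,b)=0$ here as well, we get $\delta(a,b;n)=0=d(a,b)$ for every $n$, and the limit holds trivially. In all remaining cases $d(a,b)>0$, and Theorem~\ref{Theorem4}(1) asserts that
$$
B\left(a,b;\left(\C^2\right)^{[n]}\right)\sim \frac{d(a,b)}{4\sqrt{3}n}\cdot e^{\pi\sqrt{\frac{2n}{3}}}
$$
as $n\to\infty$. Dividing this relation by \eqref{HR}, the common factor $\frac{1}{4\sqrt{3}n}e^{\pi\sqrt{2n/3}}$ cancels, so that
$$
\lim_{n\to\infty}\delta(a,b;n)=\lim_{n\to\infty}\frac{B\left(a,b;\left(\C^2\right)^{[n]}\right)}{p(n)}=d(a,b),
$$
which is the claim.

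Part (2) will follow in precisely the same manner: I would invoke Theorem~\ref{Theorem4}(2) in place of Theorem~\ref{Theorem4}(1), together with the observation from the same remark that $B(a,b;((\C^2)^{[n]})^{T_{\alpha,\beta}})$ also vanishes identically when $a$ is odd and $b$ is even, and then divide by \eqref{HR} exactly as before. There is no genuine obstacle here beyond quoting the already-proven asymptotics; the only point demanding any care is the separate treatment of the degenerate case $d(a,b)=0$, which is handled by identical vanishing rather than by an asymptotic comparison.
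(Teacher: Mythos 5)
Your proposal is correct and follows essentially the same route as the paper, which simply deduces the corollary from Theorem~\ref{Theorem4} and the definition of $d(a,b)$ in \eqref{XXXX}. Your separate treatment of the degenerate case $d(a,b)=0$ (where the asymptotic relation cannot literally be divided) is a small point of extra care that the paper leaves implicit, but it does not constitute a different approach.
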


This paper is organized as follows. In Section~\ref{Section2}, we state and prove a general theorem
(see Theorem~\ref{Theorem1})  on the asymptotic properties (near roots of unity) of the three infinite products given in the abstract, a result that is of independent interest. The proof is obtained by suitably adapting the method of Euler--Maclaurin summation in two cases, and via modularity in the other. In Section~\ref{Section3} we recall recent work of Han
extending the Nekrasov--Okounkov partition formula, and we prove Theorems~\ref{Theorem2} and \ref{Vanishing}.  To show Theorem~\ref{Theorem2} and Corollary~\ref{Corollary3}, we employ Theorem~\ref{Theorem1} (2) and results of Zuckerman pertaining to exact formulas for Fourier coefficients of modular forms. In Section~\ref{Section4} we recall the work of G\"ottsche,  and Buryak, Feigin, and Nakajima on homogeneous and quasihomogeneous Hilbert schemes for $n$ points, which we then employ to prove Theorem~\ref{Theorem4} and Corollary~\ref{Corollary5} using Theorem~\ref{Theorem1} (1), and (3), and results of Ngo--Rhoades using Wright's Circle Method.  Finally, in Section~\ref{Section5} we offer numerical examples of these results.

\section*{Acknowledgements}

The authors thank George Andrews, Walter Bridges, Giulia Cesana, Johann Franke, Jack Morava, and Ole Warnaar for helpful discussions related to the results in this paper. Moreover we thank the referees for helpful comments. The first author has received funding~from the European Research Council (ERC) under the European Union’s Horizon 2020 research and innovation programme (grant agreement No. 101001179). The research of the third author conducted for this paper is supported by the Pacific Institute for the Mathematical Sciences (PIMS). The research and findings may not reflect those of the Institute. The fourth author thanks the support of the Thomas Jefferson Fund and the NSF (DMS-1601306 and DMS-2055118), and the Kavli Institute grant NSF PHY-1748958.

\section{Asymptotics for special $q$-infinite products}\label{Section2}

The Hardy--Ramanujan asymptotic formula given in \eqref{HR} marked the birth of the Circle Method. Its proof relied  on the modular transformation properties of {\it Dedekind's eta-function} $\eta(\tau):= q^{\frac{1}{24}} \prod_{n=1}^{\infty}(1-q^n),$ where
$q:=e^{2\pi i \tau}$ (for example, see Chapter 1 of \cite{CBMS}). Their work has been thoroughly developed in the theory of modular forms and harmonic Maass forms (for example, see Chapter~15 of \cite{BFOR}), and
has been generalized beyond this setting in papers by  Grosswald, Meinardus, Richmond, Roth, and Szekeres \cite{Grosswald, Meinardus, Richmond, RothSzekeres}, to name a few.

\subsection{Statement of the results}

Generalizing the infinite product which defines $\eta,$ we consider the ubiquitous $q$-infinite products
\begin{equation*}
F_1(\xi; q):=\prod_{n=1}^{\infty}\left(1-\xi q^n\right),\ \ \
F_2(\xi; q):=\prod_{n=1}^{\infty}\left(1-(\xi q)^n\right),
\ \ \ {\text {\rm and}}\ \ \ F_3(\xi;q):=\prod_{n=1}^{\infty} \left(1-\xi^{-1}(\xi q)^n\right).
\end{equation*}
These infinite products are common as factors of generating functions in combinatorics, number theory, and representation theory.
We  obtain the asymptotic properties for $F_1(\xi;q), F_2(\xi;q),$ and $F_3(\xi;q),$ where $\xi$ is a root of unity, which are generally required for implementing
the Circle Method to such generating functions. This result is of independent interest.

To make this precise, we recall \textit{Lerch's transcendent}
\begin{align*}
\Phi(z,s,a):=\sum_{n=0}^\infty \frac{z^n}{(n+a)^s}.
\end{align*}
 Moreover, for coprime $h,k\in\N$ we define
\begin{align}\label{eqn: defn omega_h,k}
\omega_{h,k}:=\exp(\pi i \cdot s(h,k)),
\end{align}
using the \emph{Dedekind sum}
\begin{align*}
s(h,k):=\sum_{\mu \pmod k} \left(\left(\frac{\mu}{k}\right)\right)\left(\left(\frac{h\mu}{k}\right)\right).
\end{align*}
Here we use the standard notation
\begin{align*}
((x)):=\begin{cases} x-\lfloor x \rfloor-\frac{1}{2} & \text{if} \ x\in \mathbb R \setminus \mathbb Z, \\ 0 & \text{if} \ x \in \mathbb Z. \end{cases}
\end{align*}
For arbitrary positive integers $m$ and $n$, we define $\omega_{m,n} := \omega_{\frac{m}{\gcd(m,n)}, \frac{n}{\gcd(m,n)}}$. Note that $s(h,k)$ only depends on $h\pmod{k}$ and that $s(0,1)=0$. Moreover, we let
\begin{align} \label{LambdaEQ} 
\lambda_{t,a,b,h,k} := \gcd(k,t)  \begin{cases} 1 & \text{if } k=1 \text{ or } \lp k > 1 \text{ and } b \centernot | \frac{k}{\gcd(k,t)}\rp, \\ b & \text{if } b | \frac{k}{\gcd(k,t)} \text{ and } \frac{ht}{\gcd(k,t)} + a\frac{k}{b \gcd(k,t)} \not \equiv 0 \pmod{b}, \\ b^2 & \text{if } b | \frac{k}{\gcd(k,t)} \text{ and } \frac{ht}{\gcd(k,t)} +a \frac{k}{b \gcd(k,t)} \equiv 0 \pmod{b}. \end{cases}
\end{align}

 For $0\leq \theta < \frac{\pi}{2}$, we define the domain 
 \begin{align}\label{eqn: defn D_theta}
 	D_{\theta} \coloneqq \left\{ z=re^{i\alpha} \colon r \geq 0 \text{ and } |\alpha| \leq \theta \right\}.
 \end{align}

\begin{theorem}\label{Theorem1} 
Assume the notation above. For $b>0$, let $\xi$ be a primitive $b$-th root of unity, then the following are true. 
\begin{enumerate}[leftmargin=*]
	\item[\rm (1)] As $z \to 0$ in $D_\theta$ we have 
	\begin{align*}
	F_{1}\left(\xi;e^{-z}\right)  =\frac{1}{\sqrt{1-\xi}} \, e^{-\frac{\xi\Phi(\xi,2,1)}{z}}\lp 1+O\left(|z|\right) \rp.
	\end{align*}
	
	\item[\rm (2)] Suppose that $b$ is an odd prime, and let $\xi = e^{\frac{2\pi i a}{b}}$, $t \in \N$, $q = e^{\frac{2\pi i}{k}(h + iz)}$ for $0 \leq h < k$ with $\gcd(h,k) = 1$, and $z \in \C$ with $\mathrm{Re}(z) > 0$. 
	Then as $z \to 0$ we have 
$$F_2\left(\xi;q^t\right) \sim \omega_{\frac{hbt+ak}{\lambda_{t,a,b,h,k}}, \frac{kb}{\lambda_{t,a,b,h,k}}}^{-1} \left(\frac{\lambda_{t,a,b,h,k}}{tbz}\right)^{\frac 12} e^{-\frac{\pi \lambda_{t,a,b,h,k}^2}{12 b^2 ktz}}.$$

\item[\rm (3)] As $z\to 0$ in $D_\theta$, we have
\begin{align*}
F_3\left(\xi;e^{-z}\right)= \frac{\sqrt{2\pi} \left(b^2z\right)^{\frac 12-\frac 1b}}{\Gamma\left(\frac{1}{b}\right)}
\prod_{j=1}^{b-1}\frac{1}{(1-\xi^j)^{\frac jb}}
e^{-\frac{\pi^2}{6b^2z}}\lp 1+  O\left(|z|\right) \rp.
\end{align*} 
\end{enumerate}
\end{theorem}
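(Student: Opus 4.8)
The plan is to handle parts (1) and (3) by taking logarithms and applying Euler--Maclaurin summation to the resulting sums $\sum_{n\ge 1}f(nz)$, and to handle part (2) by recognizing $F_2$ as a value of Dedekind's $\eta$ and invoking its modular transformation at the cusp $h/k$. These correspond to the ``two cases'' and the ``other'' mentioned in the introduction.

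For part (1), writing $q=e^{-z}$ and taking logarithms gives
\[
\Log F_1\left(\xi;e^{-z}\right)=\sum_{n\ge 1}\log\left(1-\xi e^{-nz}\right)=\sum_{n\ge 1}f(nz),\qquad f(w):=\log\left(1-\xi e^{-w}\right).
\]
Since $b>1$ forces $\xi\neq 1$, the function $f$ is holomorphic near $[0,\infty)$ and decays exponentially, so Euler--Maclaurin yields $\sum_{n\ge 1}f(nz)=\frac 1z\int_0^\infty f(w)\,dw-\frac 12 f(0)+O(|z|)$. A direct computation gives $\int_0^\infty f(w)\,dw=-\mathrm{Li}_2(\xi)=-\xi\Phi(\xi,2,1)$ and $f(0)=\log(1-\xi)$, and exponentiating produces the factor $(1-\xi)^{-1/2}$ and the exponential $e^{-\xi\Phi(\xi,2,1)/z}$. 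The only genuine subtlety is justifying this expansion \emph{uniformly} for $z$ in the sector $D_\theta$, i.e. controlling the Euler--Maclaurin remainder for complex $z$ bounded away from the imaginary axis; this is exactly what the hypothesis $z\in D_\theta$ permits.

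For part (3), I would write the $n$-th factor as $1-\xi^{n-1}e^{-nz}$ and split the product by the residue of $n$ modulo $b$. For each class $n\equiv j+1\pmod b$ with $1\le j\le b-1$ (so $\xi^j\neq 1$), the subsum is $\sum_{m\ge 0}\log(1-\xi^j e^{-(bm+j+1)z})$, to which Euler--Maclaurin (summing from $m=0$, with endpoint term $\tfrac12 g_j(0)$) applies; here the substitution $w=(bm+j+1)z$ introduces a lower endpoint $(j+1)z$, and expanding $\int_{(j+1)z}^\infty$ about $0$ yields the constant exponent $\tfrac12-\tfrac{j+1}{b}$ for $\log(1-\xi^j)$. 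Summing over $j$ and using $\prod_{j=1}^{b-1}(1-\xi^j)=b$ accounts for the factor $\prod_{j=1}^{b-1}(1-\xi^j)^{-j/b}$ together with a power $b^{1/2-1/b}$, while the identity $\sum_{j=0}^{b-1}\mathrm{Li}_2(\xi^j)=\tfrac{\pi^2}{6b}$ produces the collective exponential $e^{-\pi^2/(6b^2z)}$. The exceptional class $n\equiv 1\pmod b$ is $\prod_{m\ge 0}(1-e^{-(bm+1)z})=(e^{-z};e^{-bz})_\infty$, where $f$ has a logarithmic singularity at $0$ and ordinary Euler--Maclaurin fails; I would instead compute its logarithm $-\sum_{r\ge 1}\tfrac1r\tfrac{e^{-rz}}{1-e^{-rbz}}$ by a Mellin transform, whose transform is $\Gamma(s)b^{-s}\zeta(s,\tfrac1b)\zeta(s+1)$. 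Shifting the contour left and collecting the residue at $s=1$ (part of the exponential) and the double pole at $s=0$, using $\zeta(0,\tfrac1b)=\tfrac12-\tfrac1b$ and $\zeta'(0,\tfrac1b)=\log(\Gamma(\tfrac1b)/\sqrt{2\pi})$, supplies precisely the factors $z^{1/2-1/b}$ and $\sqrt{2\pi}/\Gamma(\tfrac1b)$; reassembling all pieces gives the stated formula.

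For part (2), since $\mathrm{Re}(z)>0$ the number $\tau':=\tfrac ab+\tfrac{th}{k}+\tfrac{itz}{k}$ lies in the upper half-plane and satisfies $\xi q^t=e^{2\pi i\tau'}$, so $F_2(\xi;q^t)=(\xi q^t)^{-1/24}\eta(\tau')$. Writing the rational part as $\tfrac{ak+bth}{bk}$ and reducing it to lowest terms $H/K$, one finds $K=kb/\lambda_{t,a,b,h,k}$, the three cases in \eqref{LambdaEQ} recording exactly which powers of $b$ cancel, governed by whether $b\mid \tfrac{k}{\gcd(k,t)}$ and by the congruence of $\tfrac{ht}{\gcd(k,t)}+a\tfrac{k}{b\gcd(k,t)}$ modulo $b$. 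The $\eta$-transformation at the cusp $H/K$ writes $\eta(\tau')$ as a Dedekind-sum multiplier times $(\,\cdot\,)^{1/2}\eta(w)$ with $w\to i\infty$ as $z\to 0$; since $w\sim i\lambda_{t,a,b,h,k}^2/(kb^2tz)$ and $\eta(w)\sim e^{\pi i w/12}$, this produces the leading singularity $(\lambda_{t,a,b,h,k}/(tbz))^{1/2}e^{-\pi\lambda_{t,a,b,h,k}^2/(12b^2ktz)}$, while the multiplier assembles into $\omega_{(hbt+ak)/\lambda_{t,a,b,h,k},\,kb/\lambda_{t,a,b,h,k}}^{-1}$. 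I expect the main obstacle to be exactly this last step: carrying out the reduction of $\tfrac{ak+bth}{bk}$ in all three regimes and matching the $\eta$-multiplier, the factors of $i$, and the argument of the Dedekind sum to the stated $\omega^{-1}$. This number-theoretic bookkeeping, together with the uniform control of the Euler--Maclaurin remainders in parts (1) and (3), is where the real work lies.
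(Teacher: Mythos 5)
Your overall architecture (Euler--Maclaurin after taking logarithms for (1) and (3), modularity of $\eta$ for (2)) matches the paper, and part (2) is essentially identical to the paper's argument: both reduce $F_2(\xi;q^t)=(\xi q^t;\xi q^t)_\infty$ to the transformation law for $(q;q)_\infty$ and both locate the real work in showing $\lambda_{t,a,b,h,k}=\gcd(kb,hbt+ak)$ via the same case analysis on $b\mid \frac{k}{\gcd(k,t)}$. For (1) and (3), however, your execution is genuinely different and arguably leaner. The paper expands $\mathrm{Log}(1-\xi e^{-nz})$ into a double series and applies its generalized Euler--Maclaurin lemma (Lemma~\ref{lemma11}) to the function $f(w)=\frac{e^{-w}}{w(1-e^{-w})}$, which has a double pole at $0$; this forces the machinery of $\zeta(2,\frac jb)$, the digamma identity \eqref{digam}, and (for part (3)) the Binet-type integral of Lemma~\ref{lem:int} to recover the constant $\log\bigl(b^{\frac12-\frac1b}\sqrt{2\pi}/\Gamma(\frac1b)\bigr)$. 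You instead apply Euler--Maclaurin directly to $w\mapsto\log(1-\xi^j e^{-w})$, which is holomorphic at $0$ whenever $\xi^j\neq1$ (note $|\xi^je^{-w}|<1$ on $D_\theta\setminus\{0\}$ keeps the principal branch safe), so only the elementary version \eqref{Eqn: EM holomorphic} is needed; the constants $\int_0^\infty\log(1-\xi^je^{-w})\,dw=-\mathrm{Li}_2(\xi^j)=-\xi^j\Phi(\xi^j,2,1)$ and $B_1\bigl(\tfrac{j+1}{b}\bigr)$ then produce the exponential and the product $\prod_{j}(1-\xi^j)^{-j/b}\cdot b^{\frac12-\frac1b}$ without any digamma sums. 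For the one singular class $n\equiv1\pmod b$ in (3) you switch to a Mellin transform, where the residue of $\Gamma(s)\zeta(s+1)b^{-s}\zeta(s,\tfrac1b)z^{-s}$ at the double pole $s=0$ yields $(bz)^{\frac12-\frac1b}\sqrt{2\pi}/\Gamma(\tfrac1b)$ via $\zeta(0,\tfrac1b)=\tfrac12-\tfrac1b$ and Lerch's formula $\zeta'(0,\tfrac1b)=\log\bigl(\Gamma(\tfrac1b)/\sqrt{2\pi}\bigr)$ --- this replaces the paper's Lemma~\ref{lem:int} entirely. I verified the bookkeeping: your exponents $\tfrac12-\tfrac{j+1}{b}$ combine with $\prod_{j=1}^{b-1}(1-\xi^j)=b$ and the Mellin constants to reproduce $(b^2z)^{\frac12-\frac1b}$ exactly, and $\sum_{j=0}^{b-1}\mathrm{Li}_2(\xi^j)=\tfrac{\pi^2}{6b}$ gives the correct exponential. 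What your route buys is the avoidance of the pole-corrected Euler--Maclaurin lemma and the digamma identity; what it costs is the need for two separate tools (holomorphic Euler--Maclaurin plus Mellin inversion) where the paper uses one lemma uniformly, and you must still supply the uniform error control on $D_\theta$ that you correctly flag as the remaining technical point.
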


\begin{remark}
If $\xi=1$ and $q=e^{2\pi i \tau}$, then we have $$F_1(1;q)=F_2(1;q)=F_3(1;q)=q^{-\frac{1}{24}}\eta(\tau).$$
Asymptotic properties in this case are well-known consequences of the modularity of $\eta(\tau).$
\end{remark}

\subsection{The Euler--Maclaurin summation formula}\label{sec:eulermaclaurinsummationformula}
We require the following generalization of the Euler--Maclaurin summation formula. To state it, we need some notation.  For $s,z\in\C$ with $\operatorname{Re}(s)>1, \operatorname{Re}(z)>0$, we recall the {\it Hurwitz zeta function} $\zeta(s,z):=\sum_{n=0}^\infty \frac{1}{(n+z)^s},$ the \emph{digamma function} $\psi(x):=\frac{\Gamma'(x)}{\Gamma(x)},$ and the Euler--Mascheroni constant $\gamma$. Furthermore, we let $B_n(x)$ denote the \emph{$n$-th Bernoulli polynomial} defined via its generating function $\frac{te^{xt}}{e^t-1}=\sum_{n=0}^\infty B_n(x)\frac{t^n}{n!}$. The consequence of the Euler--Maclaurin summation formula required is described by the following lemma. A function $f$ on a domain in $\C$ is of \textit{sufficient decay} if there exists $\varepsilon >0$ such that $f(w) \ll w^{-1-\varepsilon}$ as $|w| \rightarrow \infty$ in the domain. Throughout we say that
\[
	f(z) \sim \sum_{n=0}^\infty a_nz^n
\]
if for any $N\in\N_0$, $f(z)=\sum_{n=0}^N a_nz^n+O(|z|^{N+1})$.

\begin{lemma}\label{lemma11}
	Let $0 < a \leq 1$ and $A \in \R^+$, and let $D_{\theta}$ be defined by \eqref{eqn: defn D_theta}. Assume that $f(z) \sim \sum_{n=n_0}^{\infty} c_n z^n$ $(n_0\in\Z)$ as $z \rightarrow 0$ in $D_\theta$. Furthermore, assume that $f$ and all of its derivatives are of sufficient decay in $D_\theta$ in the above sense. Then we have that
	\begin{align*}
	\sum_{n=0}^\infty f((n+a)z)\sim \sum_{n=n_0}^{-2} c_{n} \zeta(-n,a)z^{n}+ \frac{I_{f,A}^*}{z}-\frac{c_{-1}}{z} \left( \Log \left(Az \right) +\psi(a)+\gamma \right)-\sum_{n=0}^\infty c_n \frac{B_{n+1}(a)}{n+1} z^n,
	\end{align*}
	as $z \rightarrow 0$ uniformly in $D_\theta$, where 
		\begin{align*}
			I_{f,A}^*:=\int_{0}^{\infty} \left(f(u)-\sum_{n=n_0}^{-2}c_{n}u^n-\frac{c_{-1}e^{-Au}}{u}\right)du.
		\end{align*}
\end{lemma}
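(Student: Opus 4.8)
The plan is to reduce the statement to the classical Euler--Maclaurin summation formula by subtracting an explicit model for the singular part of $f$ at the origin. Since $f(z)\sim \sum_{n\ge n_0} c_n z^n$, the only obstructions both to the convergence of $\sum_{m\ge 0} f((m+a)z)$ and to a direct application of Euler--Maclaurin come from the terms with $n<0$. I would therefore write $f=\phi+g$, where
\[
\phi(w):=\sum_{n=n_0}^{-2} c_n w^n+\frac{c_{-1}e^{-Aw}}{w},\qquad g(w):=f(w)-\phi(w).
\]
The point of this choice is that $g$ is regular at the origin, with $g(w)\sim\sum_{n\ge 0}\tilde c_n w^n$ where $\tilde c_n:=c_n-c_{-1}\frac{(-A)^{n+1}}{(n+1)!}$ absorbs the nonnegative Taylor coefficients of $c_{-1}e^{-Aw}/w$, and, crucially, $g$ inherits sufficient decay from $f$: the factor $e^{-Aw}$ is inserted precisely so that the borderline term $c_{-1}/w$, which alone would decay only like $w^{-1}$, is replaced by the exponentially decaying $c_{-1}e^{-Aw}/w$ (valid since $\mathrm{Re}(w)>0$ throughout $D_\theta$). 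One then treats the three pieces of $\sum_{m\ge 0}f((m+a)z)$ separately.

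The two routine pieces are the higher-order poles and the regular remainder. For the former, $\sum_{m\ge 0}(m+a)^n=\zeta(-n,a)$ converges for $n\le -2$, so $\sum_{m\ge 0}\sum_{n=n_0}^{-2}c_n((m+a)z)^n=\sum_{n=n_0}^{-2}c_n\zeta(-n,a)z^n$, which is the first term of the claim. For the latter, $g$ satisfies the hypotheses of the classical Euler--Maclaurin expansion (which is exactly the special case $n_0\ge 0$, $c_{-1}=0$ of the present lemma, and a standard result), giving
\[
\sum_{m\ge 0} g((m+a)z)\sim\frac1z\int_0^\infty g(u)\,du-\sum_{n\ge 0}\tilde c_n\frac{B_{n+1}(a)}{n+1}z^n,
\]
after rewriting the derivative terms $\frac{B_{k}(a)}{k!}z^{k-1}g^{(k-1)}(0)$ via $g^{(k-1)}(0)=(k-1)!\,\tilde c_{k-1}$. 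Since $\int_0^\infty g(u)\,du=I_{f,A}^*$ by the very definition of $I_{f,A}^*$, this yields $I_{f,A}^*/z$ together with a Bernoulli tail carrying the shifted coefficients $\tilde c_n$ rather than the desired $c_n$.

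The crux is the simple-pole sum $c_{-1}\sum_{m\ge0} e^{-A(m+a)z}/((m+a)z)$, which I would evaluate to all orders. Writing $S(z):=\sum_{m\ge0} e^{-A(m+a)z}/(m+a)$, differentiation gives the closed form $S'(z)=-A\,e^{-Aaz}/(1-e^{-Az})$, and comparison with $\frac{te^{xt}}{e^t-1}=\sum_n B_n(x)\frac{t^n}{n!}$ at $t=-Az$, $x=a$, yields $S'(z)=-\sum_{n\ge0}B_n(a)\frac{(-A)^n}{n!}z^{n-1}$. Integrating and fixing the constant from the leading Lerch/Hurwitz asymptotic $S(z)\sim-\Log(Az)-\gamma-\psi(a)$ (which follows from $\Phi(e^{-Az},1,a)\sim -\Log(Az)-\gamma-\psi(a)$, equivalently from $\zeta(s,a)=\frac{1}{s-1}-\psi(a)+O(s-1)$ near $s=1$) gives
\[
S(z)=-\Log(Az)-\gamma-\psi(a)-\sum_{n\ge1}\frac{B_n(a)(-A)^n}{n\cdot n!}z^n.
\]
Hence $c_{-1}S(z)/z$ supplies exactly the advertised term $-\frac{c_{-1}}{z}\left(\Log(Az)+\psi(a)+\gamma\right)$ plus a tail $-c_{-1}\sum_{n\ge0}\frac{(-A)^{n+1}}{(n+1)!}\frac{B_{n+1}(a)}{n+1}z^n$. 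Adding the three contributions, this tail cancels identically against the discrepancy $-\sum_{n\ge0}(\tilde c_n-c_n)\frac{B_{n+1}(a)}{n+1}z^n$ left over from the regular part, so the shifted coefficients $\tilde c_n$ collapse back to $c_n$ and the stated expansion emerges.

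I expect the main obstacle to be twofold: first, pushing the simple-pole analysis beyond the leading logarithm and verifying the exact cancellation that restores the original coefficients $c_n$ (the computation in the previous paragraph is the heart of the matter); and second, upgrading the pointwise asymptotics to the uniform statement over the sector $D_\theta$. The latter requires that the decay hypotheses on $f$ and all its derivatives hold uniformly in $D_\theta$, so that the remainder in the classical Euler--Maclaurin formula for $g$ and the error in the $S(z)$ expansion are controlled uniformly as $z\to 0$ with $|\arg z|\le\theta$; the exponential regularization $e^{-Aw}$ is exactly what makes these uniform tail estimates go through.
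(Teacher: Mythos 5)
Your proposal is correct and follows essentially the same route as the paper: the identical decomposition $f=g+\sum_{n=n_0}^{-2}c_nw^n+c_{-1}e^{-Aw}/w$, the Hurwitz zeta evaluation of the pole terms, the classical (holomorphic) Euler--Maclaurin expansion applied to $g$, and the cancellation between the shifted coefficients $c_n(g)$ and the Bernoulli tail of the simple-pole sum. The only difference is that you derive the identity $\sum_{n\ge0}\frac{e^{-A(n+a)z}}{n+a}+\sum_{n\ge1}\frac{B_n(a)}{n\cdot n!}(-Az)^n=-\left(\Log(Az)+\psi(a)+\gamma\right)$ from scratch via the differential equation for $S(z)$ and the Bernoulli generating function, whereas the paper simply cites equation (5.10) of \cite{BJM}.
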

\begin{remark}
	Note that for $a=1$, we have that $\psi(a)+\gamma=0$.
\end{remark}
\begin{proof}[Proof of Lemma \ref{lemma11}]
	A generalization of an observation of Zagier \cite[Proposition 3]{Za} is that of \cite[Theorem 1.2]{BJM}, which states the following. Let $h$ be a holomorphic function on a domain containing $D_\theta$, so that in particular $h$ is holomorphic at the origin, such that $h$ and all of its derivatives have sufficient decay, and $h(z) \sim \sum_{n=0}^{\infty} c_n z^n$ as $z \rightarrow 0$ in $D_\theta$.  Furthermore, let $I_h \coloneqq \int_0^\infty h(x)dx$. Then we have for $a\in\R$
	\begin{align}\label{Eqn: EM holomorphic}
	\sum_{n=0}^\infty h((n+a)z)\sim\frac{I_h}{z}-\sum_{n=0}^\infty c_n \frac{ B_{n+1}(a)}{n+1}z^n,
	\end{align}
	as $z \rightarrow 0$ in $D_\theta$. For the given $A$, write
	\begin{align}\label{Eqn: f as g}
	f(z) = g(z) + \frac{c_{-1}e^{-Az}}{z} + \sum_{n=n_0}^{-2} c_nz^n,
	\end{align}
	which means that
	\begin{align*}
	g(z) = f(z) - \frac{c_{-1}e^{-Az}}{z} - \sum_{n=n_0}^{-2} c_nz^n.
	\end{align*}
	The final term in \eqref{Eqn: f as g} yields the first term in the right-hand side of the lemma. Since $g$ has no pole, \eqref{Eqn: EM holomorphic} gives that
	\begin{align*}
	\sum_{n=0}^\infty g((n+a)z)\sim\frac{I_g}{z}- \sum_{n=0}^\infty c_n(g) \frac{ B_{n+1}(a)}{n+1}z^n,
	\end{align*}
	where $c_n(g)$ are the coefficients of $g$. Note that $I_g = I_{f,A}^*$. We compute that
	\begin{align*}
	- \sum_{n=0}^\infty c_n(g) \frac{ B_{n+1}(a)}{n+1}z^n = -  \sum_{n=0}^\infty \left(c_n - \frac{(-A)^{n+1} c_{-1}}{(n+1)!}\right) \frac{ B_{n+1}(a)}{n+1}z^n.
	\end{align*}
	Combining the contribution from the second term with the contribution from the second term from \eqref{Eqn: f as g}, we obtain
	\begin{align*}
	\frac{c_{-1}}{z} \left( \sum_{n =0}^{\infty} \frac{e^{-A(n+a)z}}{n+a} + \sum_{n=1}^{\infty} \frac{B_n(a)}{n \cdot n!} (-Az)^n \right).
	\end{align*}
Using \cite[equation (5.10)]{BJM}, the term in the paranthesis is equal to $-(\Log(Az)+\psi(a)+\gamma)$. Combining the contributions yields the statement of the lemma.
\end{proof}

\subsection{An integral evaluation}

We require the following integral evaluation.
\begin{lemma}\label{lem:int}
We have for $N\in\R^+$
\begin{multline*}
\int_0^\infty\left(\frac{e^{-x}}{x\left(1-e^{Nx}\right)}-\frac{1}{Nx^2}+\left(\frac 1N-\frac 12\right)\frac{e^{-x}}{x} \right)dx
\\=\log\left(\Gamma\left(\frac 1N\right) \right) +\left(\frac 12-\frac 1N\right) \log\left(\frac 1N\right)-\frac 12\log(2\pi). 
\end{multline*}
\end{lemma}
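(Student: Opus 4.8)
The plan is to reduce this to a classical integral representation of $\log\Gamma$ due to Binet. Before doing anything, I would record that the integrand is genuinely integrable on $(0,\infty)$, and that this forces the denominator to be read as $1-e^{-Nx}$ (with $1-e^{+Nx}$ the polar terms reinforce rather than cancel): using the Bernoulli expansion $t/(e^t-1)=\sum_{n\ge0}B_n t^n/n!$ one has $\frac{1}{1-e^{-Nx}}=\frac{1}{Nx}+\frac12+\frac{Nx}{12}+\cdots$, and together with $\frac{e^{-x}}{x}=\frac1x-1+\cdots$ this shows that the two subtracted terms $\frac{1}{Nx^2}$ and $\big(\frac1N-\frac12\big)\frac{e^{-x}}{x}$ cancel exactly the $x^{-2}$ and $x^{-1}$ poles at the origin. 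Hence the integrand is $O(1)$ as $x\to0^+$ and is dominated by $-\frac{1}{Nx^2}$ (so decays like $x^{-2}$) as $x\to\infty$, and the integral converges. I would then substitute $t=Nx$ to pass to the single parameter $z:=1/N>0$, which turns the integral into
\[
\int_0^\infty\left(\frac{e^{-zt}}{t\,(1-e^{-t})}-\frac{1}{t^2}+\left(z-\tfrac12\right)\frac{e^{-zt}}{t}\right)dt.
\]

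Next I would use the elementary identity $\frac{1}{1-e^{-t}}=1+\frac{1}{e^t-1}$ to separate off the Binet integrand, writing the integral as
\[
\underbrace{\int_0^\infty\left(\frac{1}{e^t-1}-\frac1t+\frac12\right)\frac{e^{-zt}}{t}\,dt}_{=:\,\mu(z)}\;+\;\underbrace{\int_0^\infty\left(\frac{e^{-zt}-1}{t^2}+z\,\frac{e^{-zt}}{t}\right)dt}_{=:\,K}.
\]
The same pole-cancellation check shows that each of $\mu(z)$ and $K$ converges separately (the bracket in $\mu(z)$ is $\frac{t}{12}+O(t^3)$ near $0$, while the $K$-integrand is $-\frac{z^2}{2}+O(t)$ near $0$ and $\sim -t^{-2}$ at infinity), so the splitting is legitimate.

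I would then evaluate the two pieces. The remainder $K$ is exact and elementary: its integrand is the total derivative $\frac{d}{dt}\big[(1-e^{-zt})/t\big]$, so $K=\big[(1-e^{-zt})/t\big]_0^\infty=0-z=-z$. For $\mu(z)$ I would invoke Binet's classical representation of the Stirling remainder (see, e.g., Whittaker and Watson, \emph{Modern Analysis}, \S12.3),
\[
\log\Gamma(z)=\left(z-\tfrac12\right)\log z-z+\tfrac12\log(2\pi)+\mu(z),\qquad \operatorname{Re}(z)>0,
\]
so that $\mu(z)=\log\Gamma(z)-\big(z-\tfrac12\big)\log z+z-\tfrac12\log(2\pi)$. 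Adding $\mu(z)+K$, the two copies of $\pm z$ cancel and I obtain $\log\Gamma(z)+\big(\tfrac12-z\big)\log z-\tfrac12\log(2\pi)$; substituting $z=1/N$ gives precisely $\log\Gamma\big(\tfrac1N\big)+\big(\tfrac12-\tfrac1N\big)\log\big(\tfrac1N\big)-\tfrac12\log(2\pi)$, the claimed value.

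The only genuinely new content is the exact evaluation of $K$; everything else is routine once Binet's formula is on hand, so the main decision is whether to cite that representation (cleanest) or to reprove it in place. A self-contained derivation would regard the displayed integral as a function $\Phi(z)$ of the parameter, differentiate under the integral sign in $z$ to collapse it to a combination of $\log z$, $1/z$, and a digamma-type integral, integrate back, and then fix the single constant of integration by matching the $z\to\infty$ Stirling asymptotics. In that route the delicate points are justifying differentiation under the integral (uniform convergence on $z\geq\delta>0$) and controlling the decay needed to pin down the constant; in either route the finicky bookkeeping is tracking the pole cancellations so that every intermediate integral converges and the split and any interchange of limits are valid.
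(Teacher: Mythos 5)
Your proof is correct and follows essentially the same route as the paper's: substitute to normalize the exponential rate, use $\frac{1}{1-e^{-t}}=1+\frac{1}{e^t-1}$ to split off the Binet integrand, evaluate the elementary remainder as $-\frac1N$, and conclude via Binet's first formula (Whittaker--Watson \S 12.31). Your observations that the stated denominator must be read as $1-e^{-Nx}$ and that the remainder integrand is the exact derivative of $(1-e^{-zt})/t$ are nice clarifications of steps the paper leaves implicit.
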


\begin{proof}
	Making the change of variables $x\mapsto\frac xN$, the left-hand side equals
	\begin{equation*}
	\int_0^\infty\left(\frac{e^{-\frac{x}{N}}}{x\left(1-e^{-x}\right)} - \frac{1}{x^2} +\left(\frac 1N-\frac 12\right)\frac{e^{-\frac {1}N}}{x}  \right) dx.
	\end{equation*}
	Now write
	\begin{equation*}
	\frac{1}{x \left(1-e^{-x} \right)}=\frac 1x +\frac{1}{x \left(e^x-1 \right)}.
	\end{equation*}
	Thus the integral becomes
	\begin{equation*}
	\int_0^\infty\left(\frac{1}{e^x-1}+\frac 12-\frac 1x\right)\frac{e^{-\frac{x}{N}}}{x}dx
	+\int_0^\infty\left(\frac{e^{-\frac{x}{N}}}{x} - \frac{1}{x^2} +\left(\frac 1N-\frac 12\right)\frac{e^{-\frac {x}N}}{x}  -\frac{e^{-\frac{x}{N}}}{2x}+\frac{e^{-\frac{x}{N}}}{x^2}\right) dx.
	\end{equation*}
	We evaluate the second integral as $-\frac{1}{N}$. The claim now follows, using Binet's first integral formula (see 12.31 of \cite{WW}).
\end{proof}

\subsection{Proof of Theorem~\ref{Theorem1}}

We employ the generalized Euler--Maclaurin summation formula to prove Theorem~\ref{Theorem1} (1) and (3); for part (2) we use modularity.

\subsubsection{Proof of Theorem~\ref{Theorem1}~\normalfont{(1)}}

 Let $|z|<1$. Taking logarithms, we have 
 \begin{align*}
 G_{\xi}\left(e^{-z}\right)&:=\operatorname{Log} \left(F_{1}\left(\xi;e^{-z}\right) \right)
 =-z\sum_{j=1}^b \xi^j \sum_{m=0}^{\infty}f\left(\left(m+\frac j b\right)bz\right),
 \end{align*}
 where
 \begin{equation*}
 f(z):=\frac{e^{-z}}{z\left(1-e^{-z}\right)}=\frac{1}{z^2}-\frac{1}{2z} +\sum_{n=0}^\infty \frac{B_{n+2}}{(n+2)!}z^n.
 \end{equation*}
 By Lemma \ref{lemma11}, it follows that
 \begin{align*}
 \sum_{m=0}^\infty f \left( \left( m+\frac{j}{b} \right)bz\right) =\frac{\zeta\left(2,\frac j b\right)}{b^2z^2}+\frac{I_{f,1}^*}{bz} +{\frac{1}{2bz}}\left(\Log \left( {bz}\right) +\psi \left(\frac{j}{b}\right)+\gamma \right)
 +O(1).
 \end{align*}
 Therefore, we find that
 \[
 G_{\xi} \left(e^{-z}\right) = -\frac{1}{b^2z} \sum_{j=1}^b \xi^j \zeta\left(2,\frac jb\right) -\frac{I_{f,1}^*}{b} \sum_{j=1}^b \xi^{j}-\frac{1}{2b} \sum_{j=1}^b \xi^j \left(\Log\left(bz\right) +\psi\left(\frac jb\right)+\gamma\right) +O(|z|).
 \]
Now note that $\sum_{j=1}^b \xi^{j}=0.$
 Moreover, we require the identity \cite[p. 39]{Campbell}  (correcting a minus sign and erroneous $k$ on the right-hand side)
 \begin{align}\label{digam}
 \sum_{j=1}^b \psi \left( \frac{j}{b} \right) \xi^{j}=b\operatorname{Log }\left( 1-\xi \right).
 \end{align}
 Combining these observations, we obtain
 \begin{align*}
 G_{\xi}\left( e^{-z} \right)=-\frac{1}{b^2z} \sum_{j=1}^b \xi^{j} \zeta\left( 2,\frac{j}{b} \right)-\frac{1}{2} \operatorname{Log }(1-\xi) + O\left(|z|\right).
 \end{align*}
 After noting that
 \begin{align*}
 \sum_{j=1}^b \xi^{j} \zeta \left(2,\frac{j}{b} \right)&=b^2 \xi\Phi(\xi,2,1),
 \end{align*}
 the claim follows by exponentiation. \qed  \\

\subsubsection{Proof of Theorem~\ref{Theorem1}~\normalfont{(2)}}
Note that 
\[
F_2\left(\xi;q^t\right) = \left(\xi q^t; \xi q^t\right)_\infty,
\]
where $(q;q)_\infty := \prod_{j=1}^\infty (1 - q^j)$. The classical modular transformation law for the Dedekind $\eta$-function  (see 5.8.1 of \cite{CohenStromberg}) along with the identity $\eta(\tau) = q^{ \frac{1}{24}} (q;q)_\infty$ implies that
\begin{align}\label{Eqn: transform usual pochham}
(q;q)_\infty = \omega_{h,k}^{-1} z^{-\frac 12} e^{\frac{\pi}{12k}\left( z - \frac{1}{z} \right)} (q_1;q_1)_\infty,
\end{align}
where $q_1 := e^{\frac{2\pi i}{k}( h' + \frac{i}{z})}$ where $0 \leq h' < k$ is defined by $h h' \equiv -1 \pmod{k}$ and $\omega_{h,k}$ is defined as in \eqref{eqn: defn omega_h,k}. In particular, this implies that
\begin{equation} \label{Eta asymptotic}
(q;q)_\infty \sim \omega_{h,k}^{-1} z^{-\frac 12} e^{-\frac{\pi}{12kz}}
\end{equation}
as $z\rightarrow0$ with $\re{z}>0$.
Now, by using the definitions of $\xi, q$ given in the statement of Theorem 2.1 (2) we have
\[
\xi q^t = e^{\frac{2\pi i}{kb}\left( hbt + ak + itbz\right)}.
\]
We claim that $\lambda_{t,a,b,h,k}$ as defined in \eqref{LambdaEQ} satisfies $\lambda_{t,a,b,h,k} = \gcd(kb, hbt + ak)$. If $k=1$, then the claim is clear, and so we assume that $k > 1$. Write $k = \gcd(k,t) k_1$ and $t = \gcd(k,t) t_1$. Then we have
\[
\gcd(kb, hbt + ak) = \gcd(k,t) \gcd(k_1 b, hbt_1 + ak_1).
\]
Noting that $\gcd(k_1,b)$ divides each of $k_1b, hbt_1$, and $ak_1$, it follows that
\[
\gcd(kb, hbt + ak) = \gcd(k,t) \gcd(k_1, b) \gcd\left( \frac{k_1 b}{\gcd(k_1, b)}, \frac{hbt_1}{\gcd(k_1, b)} + a\frac{k_1}{\gcd(k_1,b)} \right).
\]
Note that, since $b$ is prime, $\gcd(k_1, b) \in \{ 1, b \}$. If $\gcd(k_1,b) = 1$, then
\[
\gcd(k_1 b, hbt_1 + ak_1) = \gcd(k_1, hbt_1) \gcd(b, ak_1) = 1.
\]

If on the other hand $\gcd(k_1,b) = b$, then write $k_1 = b^\kappa k_2$ with $\gcd(k_2, b) = 1$. Then
\begin{align*}
\gcd\left(k_1, ht_1 + a\frac{k_1}{b}\right) &= \gcd\left(b^\kappa k_2, ht_1 + a k_2 b^{\kappa-1}\right)
= \gcd\left(b^\kappa, ht_1 + ak_2 b^{\kappa-1}\right) \gcd(k_2, ht_1)\\
&= \gcd\left(b^\kappa, ht_1 + a k_2 b^{\kappa-1}\right).
\end{align*}
If $\kappa > 1$, then $\gcd(b^\kappa, ht_1 + ak_2 b^{\kappa-1}) = 1$ since $\gcd(b, ht_1) = 1$. If $\kappa = 1$, then we are left with $\gcd(b, ht_1 + ak_2)$. Therefore, we obtain
\[
\gcd(kb, hbt + ak) = \gcd(k,t) \begin{cases} 1 & \text{if } b \centernot | \frac{k}{\gcd(k,t)}, \\ b & \text{if } b | \frac{k}{\gcd(k,t)} \text{ and } \frac{ht}{\gcd(k,t)} + a\frac{k}{b \gcd(k,t)} \not \equiv 0 \pmod{b}, \\ b^2 & \text{if } b | \frac{k}{\gcd(k,t)} \text{ and } \frac{ht}{\gcd(k,t)} + a\frac{k}{b \gcd(k,t)} \equiv 0 \pmod{b}, \end{cases}
\]
which is equal to $\lambda_{t,a,b,h,k}$.

It follows that $\gcd(\frac{kb}{\lambda_{t,a,b,h,k}},\frac{hbt+ak}{\lambda_{t,a,b,h,k}}) = 1$. Therefore, by making the replacements $h \mapsto \frac{hbt+ak}{\lambda_{t,a,b,h,k}}$, $k \mapsto \frac{kb}{\lambda_{t,a,b,h,k}}$, and $z \mapsto \frac{tbz}{\lambda_{t,a,b,h,k}}$ in \eqref{Eta asymptotic}, the result follows.\qed

\subsubsection{Proof of Theorem \ref{Theorem1}~\normalfont{(3)}}
 Again assume that $|z|<1$. Writing
 \begin{equation*}
 F_3(\xi;q)=\prod_{j=1}^b\prod_{n=0}^{\infty}\left(1-\xi^{j-1}q^{bn+j}\right),
 \end{equation*}
 we compute
 \begin{align*}
 \operatorname{Log}\left(F_3\left(\xi;e^{-z}\right)\right)=-z\sum_{1\leq j,r \leq b} \xi^{(j-1)r}\sum_{m=0}^\infty f_j\left(\left( m+\frac{r}{b}\right) bz\right),
 \end{align*}
 where $f_j(z):=\frac{e^{-jz}}{z(1-e^{-bz})}$.
 By Lemma \ref{lemma11}, we obtain 
 \begin{equation*}\label{eqn:obtain}
 \sum_{m=0}^{\infty}f_j\left(\left(m+\frac r b\right)bz \right)
 \sim
 \frac{\zeta\left(2,\frac r b\right)}{b^3z^2}+
 \frac{I_{f_{j,1}}^*}{bz}+\frac{B_1\left(\frac{j}{b}\right)}{bz}\left(\Log\left({bz}\right)+\psi \left(\frac{r}{b}\right)+\gamma\right)+O(1)
 \end{equation*}
 The first term contributes $-\frac{\pi^2}{6b^2z}$.
 By Lemma \ref{lem:int}, the second term contributes 
 \begin{align*}
 -\frac 1b \sum_{j=1}^{b}I_{f_{j,1}}^*\sum_{r=1}^{b}\xi^{(j-1)r}& =-I_{f_{1,1}}^*
  =-\log\left(\Gamma\left(\frac{1}{b}\right)\right) - \left(\frac{1}{2}-\frac{1}{b}\right)\log\left(\frac{1}{b}\right)+\frac 12\log(2\pi) \\
 &= \log\left( \frac{b^{\frac{1}{2} -\frac{1}{b}} (2\pi)^{\frac{1}{2}} }{\Gamma\left(\frac{1}{b}\right)} \right).
 \end{align*}
  Next we evaluate
 \begin{align*}
 -\frac{1}{b}\left(\Log\left({bz}\right)+\gamma\right)\sum_{1\leq j\leq b}B_1\left(\frac{j}{b}\right)  \sum_{1\leq r\leq b} \xi^{(j-1)r}=-B_1\left(\frac{1}{b}\right)\left(\Log\left({bz}\right)+\gamma\right).
 \end{align*}
 \indent Finally we are left to compute
 \begin{align*}
 -\frac{1}{b}\sum_{1\leq j,r \leq b} \xi^{(j-1)r} \left(\frac{j}{b}-\frac{1}{2}\right)\psi\left(\frac{r}{b}\right)=-\frac{1}{b}\sum_{\substack{0\leq j\leq b-1 \\ 1\leq r \leq b}} \xi^{jr}\left(\frac{j}{b}+\frac{1}{b}-\frac{1}{2}\right)\psi\left(\frac{r}{b}\right).
 \end{align*}
 The $(\frac{1}{b}-\frac{1}{2})$-term yields $\gamma(\frac{1}{b}-\frac{1}{2})$. Thanks to \eqref{digam}, the $\frac{j}{b}$ term contributes
 \begin{align*}
 -\frac{1}{b^2} \sum_{0\leq j\leq b-1} j \sum_{1\leq r \leq b} \psi \left(\frac{r}{b}\right) \xi^{jr}=-\frac{1}{b}\sum_{1\leq j\leq b-1}j\operatorname{Log }\left(1-\xi^j\right).
 \end{align*}
 Combining these observations yields that
 \begin{align*}
 	\operatorname{Log}\left(F_3\left(\xi;e^{-z}\right)\right) = \log\left( \frac{b^{\frac{1}{2} -\frac{1}{b}} (2\pi)^{\frac{1}{2}} }{\Gamma\left(\frac{1}{b}\right)} \right) -\frac{\pi^2}{6b^2z} -B_1\left(\frac{1}{b}\right)\Log\left({bz}\right)  - \sum_{1\leq j\leq b-1} \frac{j}{b} \operatorname{Log }\left(1-\xi^j\right) + O\left(|z|\right).
 \end{align*}
Exponentiating gives the desired claim. \qed

\section{Proof of Theorem~\ref{Theorem2},  Corollary~\ref{Corollary3}, and Theorem ~\ref{Vanishing}} \label{Section3}

Here we recall a beautiful $q$-series identity of Han, who offered  the generating functions we require for Theorems~\ref{Theorem2} and \ref{Vanishing}, and Corollary~\ref{Corollary3}. Apart from factors which naturally correspond to
quotients of Dedekind's eta-function, these generating functions have factors of the form
$F_2(\xi;q^t)^{-t}$. 
The proof of Theorem~\ref{Vanishing} follows directly from this fact along with known identities for
the $2$-core and $3$-core generating functions.
To prove Theorem~\ref{Theorem2}, we apply Zuckerman's exact formulas to these functions, making strong use of Theorem~\ref{Theorem1} (2).

\subsection{Work of Han}  Here we derive the generating functions for the modular $t$-hook functions
$p_t(a,b;n)$. To this end, we recall the following beautiful formula of Han that he
derived in his work on extensions of the celebrated Nekrasov--Okounkov formula\footnote{This formula was also obtained by Westbury (see Proposition 6.1 and 6.2 of \cite{Westbury}).} (see (6.12) of \cite{NekrasovOkounkov}) with $w \in \C$:
$$
\sum_{\lambda \in \mathcal{P}} q^{|\lambda|} \prod_{h\in \mathcal{H}(\lambda)} \left(1-\frac{w}{h^2}\right)
=\prod_{n=1}^{\infty}\left(1-q^n\right)^{w-1}.
$$
Here $\mathcal{P}$ denotes the set of all integer partitions, including the empty partition, and $\mathcal{H}(\lambda)$ denotes the multiset of hook lengths for $\lambda.$
Han \cite{Han} proved the following beautiful identity for the generating function for $t$-hooks in partitions
\begin{equation*}
H_t(\xi;q):=\sum_{\lambda \in \mathcal{P}} \xi^{\# \mathcal{H}_t(\lambda)}q^{|\lambda|}.
\end{equation*}

\begin{theorem}{\text {\rm (Corollary 5.1 of \cite{Han})}}\label{HanFunction}
As formal power series, we have
$$
H_t(\xi;q)=\frac{1}{F_2(\xi;q^t)^t}\prod_{n=1}^{\infty}
\frac{\left(1-q^{tn}\right)^t}{1-q^n}.
$$
\end{theorem}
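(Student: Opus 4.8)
The plan is to prove Han's identity combinatorially via the classical Littlewood decomposition (the $t$-core/$t$-quotient bijection), which converts the statistic $\#\mathcal{H}_t$ into an ordinary size statistic on a tuple of partitions. Recall that this decomposition assigns to each $\lambda\in\mathcal{P}$ a unique $t$-\emph{core} $\lambda_{(c)}$ (a partition with $\mathcal{H}_t(\lambda_{(c)})=\emptyset$) together with a $t$-\emph{quotient} $(\lambda^{(0)},\dots,\lambda^{(t-1)})$ of arbitrary partitions, and that $\lambda\mapsto(\lambda_{(c)};\lambda^{(0)},\dots,\lambda^{(t-1)})$ is a bijection from $\mathcal{P}$ onto the set of $t$-cores times $\mathcal{P}^{t}$. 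The two numerical facts I would invoke are the size relation $|\lambda|=|\lambda_{(c)}|+t\left(|\lambda^{(0)}|+\dots+|\lambda^{(t-1)}|\right)$ and the hook-count relation $\#\mathcal{H}_t(\lambda)=|\lambda^{(0)}|+\dots+|\lambda^{(t-1)}|$; that is, the number of $t$-hooks of $\lambda$ equals the total number of cells in its $t$-quotient.

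Granting these, I would reorganize the defining sum for $H_t(\xi;q)$ according to the bijection. Substituting the two relations so that the exponent of $\xi$ becomes $\sum_i|\lambda^{(i)}|$ and the exponent of $q$ becomes $|\lambda_{(c)}|+t\sum_i|\lambda^{(i)}|$, and noting that the core and the $t$ quotient partitions vary independently, the sum factors as
$$
H_t(\xi;q)=\left(\sum_{\lambda_{(c)}} q^{|\lambda_{(c)}|}\right)\prod_{i=0}^{t-1}\left(\sum_{\mu\in\mathcal{P}} \left(\xi q^t\right)^{|\mu|}\right).
$$
By Euler's product formula the last sum equals $\prod_{n\geq 1}(1-(\xi q^t)^n)^{-1}=F_2(\xi;q^t)^{-1}$, so the quotient contribution is precisely $F_2(\xi;q^t)^{-t}$, matching the first factor of the claimed formula.

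It then remains to identify the $t$-core factor $\sum_{\lambda_{(c)}} q^{|\lambda_{(c)}|}$ with $\prod_{n\geq 1}(1-q^{tn})^t/(1-q^n)$. This can be cited as the classical $t$-core generating function, or derived on the spot from the same bijection: specializing to $\xi=1$ and using $\sum_{\lambda\in\mathcal{P}} q^{|\lambda|}=\prod_{n\geq 1}(1-q^n)^{-1}$ together with the quotient contribution $\prod_{n\geq 1}(1-q^{tn})^{-t}$, the core generating function must equal their ratio, which is exactly the stated product. Assembling the two factors yields the identity. The main obstacle is not analytic but foundational: everything rests on the three properties of the Littlewood decomposition, and in particular on the relation $\#\mathcal{H}_t(\lambda)=\sum_i|\lambda^{(i)}|$. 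I would establish this through the beta-set (abacus) description of a partition, where removing a $t$-hook corresponds to sliding a bead up one position on a fixed runner of the $t$-abacus; in this picture the bijection, the size additivity, and the hook-count relation all become transparent.
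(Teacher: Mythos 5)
Your proof is correct. The paper itself gives no proof of this identity---it is imported verbatim as Corollary 5.1 of Han's paper---and your argument via the Littlewood $t$-core/$t$-quotient bijection, using the size relation $|\lambda|=|\lambda_{(c)}|+t\sum_i|\lambda^{(i)}|$, the hook-count relation $\#\mathcal{H}_t(\lambda)=\sum_i|\lambda^{(i)}|$, and the specialization $\xi=1$ to recover the $t$-core generating function, is essentially the standard proof (and the one Han gives).
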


As a corollary, we obtain the following generating function for $p_t(a,b;n).$
\begin{corollary}\label{ptabGenFunctions}
If $t>1$ and $0\leq a<b$, then as formal power series we have
\begin{equation*}\label{Orthogonality}
H_t(a,b;q):=\sum_{n=0}^{\infty}p_t(a,b;n)q^n=\frac{1}{b} \sum_{r=0}^{b-1} \zeta_b^{-ar}H_t\left(\zeta_b^r;q\right),
\end{equation*}
where $\zeta_b:=e^{\frac{2\pi i}b}.$
\end{corollary}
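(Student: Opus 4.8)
The plan is to apply the standard roots of unity filter (i.e.\ orthogonality of the additive characters of $\Z/b\Z$) to Han's generating function from Theorem~\ref{HanFunction}. The starting point is the elementary orthogonality relation: for any integer $m$ one has
\[
\frac{1}{b}\sum_{r=0}^{b-1} \zeta_b^{r(m-a)} = \begin{cases} 1 & \text{if } m\equiv a \pmod{b}, \\ 0 & \text{otherwise}, \end{cases}
\]
which follows by summing the finite geometric series: the summand is identically $1$ when $m\equiv a$, and otherwise the common ratio $\zeta_b^{m-a}$ is a nontrivial $b$-th root of unity, so the terms cancel.

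Next I would substitute $\xi=\zeta_b^r$ into the defining series $H_t(\zeta_b^r;q)=\sum_{\lambda\in\mathcal{P}}\zeta_b^{r\,\#\mathcal{H}_t(\lambda)}q^{|\lambda|}$, multiply by $\zeta_b^{-ar}$, and sum over $0\leq r<b$. Interchanging the two sums yields
\[
\frac{1}{b}\sum_{r=0}^{b-1}\zeta_b^{-ar}H_t(\zeta_b^r;q)=\sum_{\lambda\in\mathcal{P}}q^{|\lambda|}\left(\frac{1}{b}\sum_{r=0}^{b-1}\zeta_b^{r(\#\mathcal{H}_t(\lambda)-a)}\right).
\]
Applying the orthogonality relation with $m=\#\mathcal{H}_t(\lambda)$ collapses the inner sum to the indicator function of the condition $\#\mathcal{H}_t(\lambda)\equiv a\pmod{b}$, so that precisely those partitions survive.

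The right-hand side then equals $\sum_{\lambda:\,\#\mathcal{H}_t(\lambda)\equiv a\,(b)}q^{|\lambda|}=\sum_{n=0}^{\infty}p_t(a,b;n)q^n=H_t(a,b;q)$, which is exactly the asserted identity. The only step requiring a word of justification is the interchange of summation, but this is harmless as an identity of formal power series: for each fixed $n$ the coefficient of $q^n$ receives contributions only from the finitely many partitions $\lambda\vdash n$, so the rearrangement is legitimate termwise in $q$ and no convergence question arises. I expect no genuine obstacle here, as the corollary is a direct consequence of Theorem~\ref{HanFunction} together with orthogonality of roots of unity.
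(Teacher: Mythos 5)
Your proposal is correct and is essentially identical to the paper's own proof: both apply the orthogonality relation $\frac{1}{b}\sum_{r=0}^{b-1}\zeta_b^{r(m-a)}=\mathbbm{1}_{m\equiv a\ (b)}$ to the partition-sum definition of $H_t(\xi;q)$, interchange the sums over $r$ and $\lambda$, and collapse to the indicator of $\#\mathcal{H}_t(\lambda)\equiv a\pmod{b}$. Your extra remark justifying the interchange coefficientwise in $q$ is a harmless elaboration of a step the paper leaves implicit.
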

\begin{proof}
We have that
\begin{displaymath}
\begin{split}
\frac{1}{b} \sum_{r=0}^{b-1} \zeta_b^{-ar} H_t(\zeta_b^r;q)&=
\frac{1}{b} \sum_{\lambda \in \mathcal{P}}q^{|\lambda|} \sum_{r=0}^{b-1}\zeta_b^{\left(\#\mathcal{H}_t(\lambda)-a\right)r}=H_t(a,b;q). \qedhere
\end{split}
\end{displaymath}
\end{proof}

The dependence of $H_t(\xi;q)$ on $F_2(\xi;q^t)$ enables us to compute asymptotic behavior of $H_t(\xi;q)$ using Theorem \ref{Theorem1} (2) and, by Corollary \ref{ptabGenFunctions}, the asymptotic behavior of $H_t(a,b;q)$.

\subsection{Proof of Theorem~\ref{Vanishing}}
Here we prove Theorem~\ref{Vanishing}. We first consider the case (1), where $\ell$ is an odd prime. We consider the generating function, using Corollary \ref{ptabGenFunctions}
$$
H_2(a_1,\ell ;q)=\sum_{n=0}^{\infty}p_2(a_1,\ell;n)q^n=\frac{1}{\ell} \sum_{r_1=0}^{\ell-1} \zeta_\ell^{-a_1 r_1}H_2\left(\zeta_\ell^{r_1};q\right).
$$
Applying again orthogonality of roots of unity, keeping only those terms $a_2\pmod \ell$,  where
$a_2\in \{0, 1,\dots, \ell-1\}$, we find that
$$
\sum_{n=0}^{\infty} p_2(a_1,\ell;\ell n+a_2)q^{\ell n +a_2}=
\frac{1}{\ell^2}\sum_{r_1, r_2\pmod \ell}\zeta_{\ell}^{-a_1 r_1 -a_2 r_2} H_2\left(\zeta_{\ell}^{r_1};\zeta_{\ell}^{r_2}q\right).
$$
Making use of the definition of $H_t(\xi;q)$, 
if we define $\mathcal{B}_2(q)$ and $\mathcal{C}_2(q)$ by
\begin{equation}\label{qidentities}
\mathcal{B}_2(q)=\sum_{n=0}^{\infty}b_2(n)q^n:=\prod_{n=1}^{\infty}\frac{1}{\left(1-q^n\right)^2} \ \ \ \
{\text {\rm and}}\ \ \ \ 
\mathcal{C}_2(q):=\prod_{n=1}^{\infty}\frac{\left(1-q^{2n}\right)^2}{1-q^n},
\end{equation}
then we have 
$$
\sum_{\substack{n\geq 0 \\n\equiv a_2\pmod \ell}}p_2(a_1,\ell;n)q^n=
\frac{1}{\ell^2}\sum_{r_1, r_2\pmod \ell}\zeta_{\ell}^{-a_1 r_1 -a_2 r_2}
\mathcal{B}_2\left(\zeta_{\ell}^{r_1+2r_2}q^2\right) \mathcal{C}_2\left(\zeta_{\ell}^{r_2} q\right).
$$
Thanks to the classical identity of Jacobi
\begin{equation*}
\mathcal{C}_2(q)=\sum_{k=0}^{\infty}q^{\frac{k(k+1)}{2}},
\end{equation*}
 for $N\equiv a_2\pmod{\ell}$, we find that 
\begin{align}\label{key}
p_2(a_1,\ell;N)&=\frac{1}{\ell^2}\sum_{r_1, r_2\pmod \ell}\zeta_{\ell}^{-a_1 r_1-a_2 r_2}
\sum_{\substack{k,m\geq 0\\
2m+\frac{k(k+1)}{2}=N}} b_2(m)\zeta_{\ell}^{(r_1+2r_2)m+r_2\frac{k(k+1)}{2}}
\nonumber \\
&=\sum_{\substack{m\equiv a_1\pmod \ell\\
		2m+\frac{k(k+1)}{2}=N}} b_2(m),
\end{align}
by making the linear change of variables $r_1\mapsto r_1-2r_2$ and again using orthogonality of roots of unity.
This then requires the solvability of the congruence $a_2-2a_1\equiv \frac{k(k+1)}{2}\pmod \ell.$
Completing the square produces the quadratic residue condition which prohibits this solvability, and hence
completes the proof of (1).

The proof of (2) follows similarly, with $\ell$ replaced by $\ell^2$ for primes $\ell\equiv 2\pmod 3.$ The functions in (\ref{qidentities})
are replaced with
$$
\mathcal{B}_3(q)=\sum_{n=0}^{\infty}b_3(n)q^n:=\prod_{n=1}^{\infty}\frac{1}{\left(1-q^n\right)^3} \ \ \ \ 
{\text {\rm and}}\ \ \ \ \mathcal{C}_3(q):=\prod_{n=1}^{\infty}\frac{\left(1-q^{3n}\right)^3}{1-q^n}.
$$
It is well-known that (for example, see Section 3 of \cite{GranvilleOno} or \cite[Lemma 2.5]{HanOno}),
$$
\mathcal{C}_3(q)=:\sum_{n=0}^{\infty}c_3(n)q^n=\sum_{n=0}^{\infty}\sum_{d\mid (3n+1)}\legendre{d}{3}q^n.
$$
For primes $\ell\equiv 2\pmod 3$, this implies that  $c_3(\ell^2 n+a)=0$ for every positive integer $n$, whenever $\ord_{\ell}(3a+1)=1$.
For example, this means that $c_3(4n+3)=0$ if $\ell=2$.

Let $0\leq a_1, a_2<\ell^2$. In direct analog with (\ref{key}), a calculation reveals that nonvanishing for $N\equiv a_2\pmod {\ell^2}$ relies  on sums of the form
$$
\sum_{\substack{m\equiv a_1\pmod{\ell^2}\\
3m+k=N}} b_3(m)c_3(k).
$$
If  $\ord_{\ell}(3a+1)=1$ and
$a_2-3a_1\equiv a\pmod{\ell^2}$, then $p_3(a_1,\ell^2; \ell^2+a)=0.$
 This is claim (2).

\subsection{Evaluating certain Kloosterman sums}

The proof of Theorem \ref{Theorem2} relies on the arithmetic of the Kloosterman sums
\begin{equation*}\label{KloostermanSumDfn}
	K(a,b,t;n) := \sum_{h=1}^{b-1} \frac{\omega_{h,b}}{\omega_{th,b}^t} \zeta_b^{(at-n)h},
\end{equation*}
where $b$ is an odd prime, and $s \geq 1$, $t > 1$ are integers. We evaluate this sum if $t$ is coprime to $b$. We start by computing $\omega_{h,b} \omega_{th,b}^{-t}$.

\begin{prop} \label{Dedekind Simplification}
Let $b$ be an odd prime, $h$, $t$ integers coprime to $b$, and let $\omega_{h,k}$ be defined by \eqref{eqn: defn omega_h,k}. Then we have
\begin{align*}
\frac{\omega_{h,b}}{\omega_{th,b}^t} = \lp \frac{h}{b} \rp \lp \frac{th}{b} \rp^t e^{\pi i \frac{(1-t)(b-1)}{4}} e^{\frac{2\pi i}{b} \frac{1}{24} \left(1-t^2\right)\left(1-b^2\right)h}.
\end{align*}

\end{prop}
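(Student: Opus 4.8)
The plan is to reduce everything to a computation of the Dedekind-sum combination $s(h,b) - t\,s(th,b)$ modulo $2$, since by definition
$$\frac{\omega_{h,b}}{\omega_{th,b}^t} = \exp\!\left(\pi i\left(s(h,b) - t\,s(th,b)\right)\right).$$
The key input is an explicit closed form for $\omega_{h,b} = e^{\pi i s(h,b)}$ when $b$ is an odd prime and $\gcd(h,b)=1$, expressing it as the Legendre symbol $\left(\frac hb\right)$ times an elementary root of unity whose argument is linear in $h$. Once such a formula is in hand, substituting $h$ and then $th$ and multiplying produces the three advertised factors essentially term by term.

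To obtain the closed form, I would compare the two standard descriptions of the multiplier system of Dedekind's $\eta$. On one hand, for $\gamma = \left(\begin{smallmatrix} a & b' \\ b & h\end{smallmatrix}\right)\in \SL_2(\Z)$ with $b'$ chosen so that $ah - b'b = 1$ (hence $a \equiv h^{-1}\pmod b$), the classical Dedekind-sum form of the transformation law gives the multiplier as $\exp(\pi i(\frac{a+h}{12b} - s(h,b)+\text{const}))$. On the other hand, because the lower-left entry $b$ is odd, the Petersson/Jacobi-symbol form expresses the \emph{same} multiplier as $\left(\frac hb\right)\exp(\frac{\pi i}{12}((a+h)b - b'h(b^2-1) - 3b))$, again up to the fixed normalization constant. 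Equating the two and solving for $s(h,b)$ modulo $2$ yields
$$s(h,b) \equiv \delta_h + \frac{b^2-1}{12b}\bigl(a(h^2-1) - 2h\bigr) + \text{const} \pmod 2, \qquad \left(\tfrac hb\right) = (-1)^{\delta_h}.$$
One then checks that the auxiliary quantity $a\equiv h^{-1}\pmod b$ contributes nothing modulo $2$, as it must since $s(h,b)$ depends only on $h\bmod b$, leaving a clean expression $\omega_{h,b} = \left(\frac hb\right)e^{\pi i(\frac{(b^2-1)h}{12b}(\pm 1) + \frac{b-1}{4}(\pm 1))}$ with explicit constants.

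With the closed form established, I would substitute $h$ and $th$ and form $\omega_{h,b}\,\omega_{th,b}^{-t}$. The Legendre symbols assemble into $\left(\frac hb\right)$ from $\omega_{h,b}$ together with $\left(\frac{th}{b}\right)^{-t} = \left(\frac{th}{b}\right)^t$ from $\omega_{th,b}^{-t}$, producing the factor $\left(\frac hb\right)\left(\frac{th}{b}\right)^t$. The $h$-linear exponentials combine as $\frac{b^2-1}{12b}\bigl(h - t\cdot th\bigr) = \frac{(b^2-1)(1-t^2)}{12b}\,h$, matching $e^{\frac{2\pi i}{b}\cdot\frac{1}{24}(1-t^2)(1-b^2)h}$, while the constant phases combine as $(1-t)\cdot\frac{b-1}{4}$, producing $e^{\pi i\frac{(1-t)(b-1)}{4}}$.

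The main obstacle is the delicate modulo-$2$ bookkeeping in the middle step: the relevant terms carry a denominator of $12b$, so reducing the argument of the exponential modulo $2$ (equivalently, the Dedekind sum modulo $2$) requires carefully tracking the quadratic-residue phase $\delta_h$, verifying the cancellation of the inverse-residue term $a\equiv h^{-1}\pmod b$, and reconciling the fixed normalization constants, such as the $-i(c\tau+d)$ versus $(c\tau+d)$ square-root conventions, between the two multiplier formulas. Everything else is routine algebra.
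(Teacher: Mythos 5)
Your strategy coincides with the paper's: both compare the Jacobi-symbol form of the $\eta$-multiplier (Cohen--Str\"omberg) with its Dedekind-sum form (Rademacher--Grosswald) to solve for $\omega_{h,b}$, and then take the ratio $\omega_{h,b}\omega_{th,b}^{-t}$. Up to your displayed congruence $s(h,b)\equiv\delta_h+\frac{b^2-1}{12b}\left(a\left(h^2-1\right)-2h\right)+\mathrm{const}\pmod 2$ (with $ah\equiv1\pmod b$) your computation agrees with the paper's intermediate formula. The gap is the next step: the claim that the term carrying $a\equiv h^{-1}\pmod b$ ``contributes nothing modulo $2$'' is false, and the justification offered (that $s(h,b)$ depends only on $h\bmod b$) is a non sequitur, since $h^{-1}\bmod b$ also depends only on $h\bmod b$. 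Concretely, take $b=5$, $h=2$, $a=3$: then $s(2,5)=0$, so $\omega_{2,5}=1$, and indeed $\frac{b^2-1}{12b}\left(a\left(h^2-1\right)-2h\right)=\frac{24}{60}\left(9-4\right)=2\equiv0\pmod 2$, whereas discarding the $a$-term leaves $\frac{24}{60}\cdot(-4)=-\frac{8}{5}\not\equiv0\pmod 2$. So no ``clean'' expression $\omega_{h,b}=\left(\frac{h}{b}\right)e^{\pi i(ch+d)}$ with constants $c,d$ exists, and the subsequent assembly of the $h$-linear exponentials (which, incidentally, also silently drops the factor $2$ from the $-2h$ term) is not justified as written.

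The cancellation you want occurs only in the ratio, not in $\omega_{h,b}$ itself, and proving it is the real content of the final step. Keeping both inverse terms, with $\alpha h\equiv1$ and $A\,th\equiv1\pmod{b}$, and using $\beta b=\alpha h-1$, $Bb=Ath-1$, the exponent of $\omega_{h,b}\omega_{th,b}^{-t}$ beyond the claimed $\frac{(1-t^2)(1-b^2)h}{12b}$ collapses to $\frac{1-b^2}{12b}\left(\alpha-tA\right)$ plus terms whose denominator is only $12$. Since $\alpha\equiv h^{-1}\equiv t\,(th)^{-1}\equiv tA\pmod{b}$, the quantity $\frac{\alpha-tA}{b}$ is an integer, and one then verifies that the whole remainder is an even integer (for $b\ge5$ this uses $24\mid b^2-1$; the case $b=3$ needs the extra relations among $\alpha,\beta,A,B$). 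This mod-$2$ bookkeeping is exactly the ``straightforward calculation'' in the paper's proof, and it is precisely what your argument skips by discarding the inverse term one step too early.
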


\begin{proof}
The proof of this proposition uses the $\eta$-multiplier, which we label $\psi$. Theorem 5.8.1 of \cite{CohenStromberg} yields that
for $\lp \begin{smallmatrix} \alpha & \beta  \\ \gamma  & \delta  \end{smallmatrix} \rp \in \text{SL}_2(\Z)$  with $\gamma  > 0$ odd, we have $$\psi \begin{pmatrix} \alpha & \beta  \\ \gamma  & \delta  \end{pmatrix} = \left( \frac{\delta }{\gamma } \right) e^{\frac{\pi i}{12} \left( (\alpha +\delta )\gamma  - \beta \delta \left(\gamma ^2-1\right) - 3\gamma  \right)}.$$
We also have from formula (57b) of \cite{RademacherGrosswald} that for $\lp\begin{smallmatrix} \alpha & \beta  \\ \gamma  & \delta  \end{smallmatrix}\rp \in \text{SL}_2(\Z)$
$$
\psi \begin{pmatrix} \alpha & \beta  \\ \gamma  & \delta  \end{pmatrix} = e^{\pi i \left( \frac{\alpha+\delta }{12\gamma } - \frac{1}{4} \right)}\omega_{\delta,\gamma}^{-1} .
$$
By letting $\delta  = h$, $\gamma  = b,$ we obtain
\begin{align*}
\omega_{h,b} = \lp \frac{h}{b} \rp e^{\pi i \lp \frac{1}{12b}(\alpha+h - \beta hb)\left(1-b^2\right) + \frac{b-1}{4} \rp},
 \end{align*}
 where $\alpha,\beta $ satisfy $\alpha h-\beta b=1$. We therefore may conclude that
\begin{align*}
\frac{\omega_{h,b}}{\omega_{th,b}^t} = \lp \frac{h}{b} \rp \lp \frac{th}{b} \rp^t e^{\pi i \frac{(1-t)(b-1)}{4}} e^{\frac{\pi i}{12b} \lp (\alpha - tA )\left(1-b^2\right) + h\left(1 - \beta b - t^2\left(1-B b\right)\right)\left(1-b^2\right) \rp},
\end{align*}
where $\alpha h - \beta b = A  th - B  b = 1$. A straightforward calculation then gives the claim.
\end{proof}

We now turn to evaluating the Kloosterman sum $K(a,b,t;n)$. 

\begin{prop} \label{Kloosterman Sum Evaluation}
Suppose that $b$ is an odd prime, $a, n$ are integers, and $t > 1$ is an integer coprime to $b$. Then we have

\[ 
K(a,b,t;n) =
\begin{cases}
\mathbb{I}(a,b,t,n) (-1)^{\frac{(1-t)(b-1)}{4}} \left( \frac{t}{b} \right) & \text{ if } t \text{ is odd}, \vspace{5pt} \\  (-1)^{\frac{(1-t)(b-1)}{4}}\varepsilon_b \left( \frac{\frac{1}{24}\left(1-t^2\right)\left(1-b^2\right) + at - n}{b} \right)  \sqrt{b} & \text{ if } t \text{ is even,}
\end{cases}
\]
where $\mathbb{I}(a,b,t,n)$ is defined by \eqref{eqn: defn of I}.
\end{prop}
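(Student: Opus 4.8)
The plan is to substitute the closed form for $\omega_{h,b}\omega_{th,b}^{-t}$ furnished by Proposition~\ref{Dedekind Simplification} directly into the definition of $K(a,b,t;n)$, pull the $h$-independent factor $e^{\pi i (1-t)(b-1)/4}$ out of the sum, and use $e^{\frac{2\pi i}{b}\frac{1}{24}(1-t^2)(1-b^2)h} = \zeta_b^{\frac{1}{24}(1-t^2)(1-b^2)h}$. Writing $M := \frac{1}{24}(1-t^2)(1-b^2) + at - n$, this reduces the task to evaluating the character-weighted exponential sum
\begin{equation*}
\sum_{h=1}^{b-1} \left(\frac{h}{b}\right)\left(\frac{th}{b}\right)^t \zeta_b^{Mh}.
\end{equation*}
Before proceeding I would record that $M\in\Z$: since $b$ is odd we have $8\mid b^2-1$, and since $\gcd(t,b)=1$ at most one of $t,b$ is divisible by $3$, so $3\mid(1-t^2)(1-b^2)$; hence $24\mid(1-t^2)(1-b^2)$ and the exponent is an honest integer, as required both for $\zeta_b^{Mh}$ and for $\mathbb{I}(a,b,t,n)$ in \eqref{eqn: defn of I} to be well defined.

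The computation then splits according to the parity of $t$. When $t$ is odd, I would use that each $h$ in the range $1\le h\le b-1$ is coprime to the prime $b$, so $\left(\frac{h}{b}\right)^2=1$, while $\left(\frac{th}{b}\right)^t=\left(\frac{th}{b}\right)$ because an odd power of $\pm1$ equals itself. Thus $\left(\frac{h}{b}\right)\left(\frac{th}{b}\right)^t=\left(\frac{t}{b}\right)\left(\frac{h}{b}\right)^2=\left(\frac{t}{b}\right)$ is constant in $h$, and the sum collapses to $\left(\frac{t}{b}\right)\sum_{h=1}^{b-1}\zeta_b^{Mh}$. The elementary evaluation of this geometric sum of roots of unity gives $b-1$ if $b\mid M$ and $-1$ otherwise, which is precisely $\mathbb{I}(a,b,t,n)$; together with $e^{\pi i(1-t)(b-1)/4}=(-1)^{(1-t)(b-1)/4}$ (a genuine sign, since $(1-t)(b-1)$ is divisible by $4$ when $t$ and $b$ are both odd) this yields the odd case.

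When $t$ is even, $\left(\frac{th}{b}\right)^t=1$ since an even power of $\pm1$ is $1$, so the sum becomes the twisted quadratic Gauss sum $\sum_{h=1}^{b-1}\left(\frac{h}{b}\right)\zeta_b^{Mh}$. Here I would invoke the classical identity $\sum_{h\bmod b}\left(\frac{h}{b}\right)\zeta_b^{Mh}=\left(\frac{M}{b}\right)g_b$, valid for every integer $M$ (the case $b\mid M$ being trivial on both sides), with the standard Gauss sum evaluation $g_b=\sum_{h}\left(\frac{h}{b}\right)\zeta_b^{h}=\varepsilon_b\sqrt{b}$ for $b$ an odd prime; appending the term $h=0$ is harmless since $\left(\frac{0}{b}\right)=0$. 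This gives $\left(\frac{M}{b}\right)\varepsilon_b\sqrt{b}$, and restoring the prefactor $(-1)^{(1-t)(b-1)/4}$, now read as $e^{\pi i(1-t)(b-1)/4}$ (a fourth root of unity when $b\equiv 3\pmod 4$), produces the even case.

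The routine parts are the two character-sum evaluations; the one genuine input is the classical evaluation $g_b=\varepsilon_b\sqrt{b}$ of the quadratic Gauss sum, which simultaneously supplies the factor $\varepsilon_b$ and the $\sqrt{b}$ in the even case. The main thing requiring care is bookkeeping: verifying the integrality of $M$ and the correct interpretation of the exponent $(1-t)(b-1)/4$, which is a true integer only in the odd-$t$ case and must otherwise be understood as $e^{\pi i(\cdots)}$, and confirming that the collapse of the Legendre factors uses nothing beyond $\gcd(h,b)=\gcd(t,b)=1$.
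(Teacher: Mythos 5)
Your proposal is correct and follows essentially the same route as the paper: substitute Proposition~\ref{Dedekind Simplification} into the definition of $K(a,b,t;n)$, collapse the Legendre symbols according to the parity of $t$, and evaluate the resulting sum as a geometric sum of roots of unity (odd $t$) or as a classical quadratic Gauss sum (even $t$). Your additional bookkeeping---verifying that $\frac{1}{24}(1-t^2)(1-b^2)$ is an integer and reading the prefactor as $e^{\pi i(1-t)(b-1)/4}$ when $t$ is even---is accurate and, if anything, slightly more careful than the paper's own write-up.
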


\begin{proof}
By Proposition \ref{Dedekind Simplification}, we have
\begin{align*}
	K(a,b,t;n) &= e^{\frac{\pi i}{4}(1-t)(b-1)} \sum_{h=1}^{b-1} \left(\frac hb\right) \left(\frac{th}{b}\right)^t \zeta_b^{(at-n)h+\frac{1}{24}\left(1-t^2\right)\left(1-b^2\right)h}.
\end{align*}

The multiplicativity of the Legendre symbol implies
\begin{align*}
\lp \frac{h}{b} \rp \lp \frac{th}{b} \rp^t = \lp \frac{h}{b} \rp^{t+1} \lp \frac{t}{b} \rp^t =
\begin{cases}
\lp \frac{t}{b} \rp & \text{ if } t \text{ is odd}, \vspace{5pt} \\ \lp \frac{h}{b} \rp & \text{ if } t \text{ is even}.
\end{cases}
\end{align*}

We proceed distinguishing on the parity of $t$.
Suppose first that $t$ is odd. Then since $b$ is odd, $\frac14 (1-t)(b-1)$ is an integer and the claim directly follows.

Suppose next that $t$ is even. Then we have
\begin{align*}
	K(a,b,t;n) = e^{\pi i\frac{(1-t)(b-1)}{4}} \sum_{h=1}^{b-1} \left(\frac hb\right) \zeta_b^{h\left(\frac{1}{24}\left(1-t^2\right)\left(1-b^2\right)+at-n\right)}.
\end{align*}
Using the classical evaluation of the Gauss sum (see for example pages 12-13 of \cite{Davenport}), we obtain
\[
	\sum_{h=1}^{b-1} \left(\frac hb\right) \zeta_b^{\left(\frac{1}{24}\left(1-t^2\right)\left(1-b^2\right)+at-n\right)h} = \left(\frac{\frac{1}{24}\left(1-t^2\right)\left(1-b^2\right)+at-n}{b}\right) \varepsilon_b \sqrt{b}. \qedhere
\]
\end{proof}

\subsection{An exact formula of Zuckerman}\label{Sec: Zuc}
Here we recall a result of Zuckerman \cite{Zuckerman}, building on work of Rademacher \cite{RademacherExact}. Using the Circle Method, Zuckerman computed exact formulae for Fourier coefficients for weakly holomorphic modular forms of arbitrary non-positive weight on finite index subgroups of $\SL_2(\Z)$ in terms of the cusps of the underlying subgroup and the principal parts of the form at each cusp. 
Let $F$ be a weakly holomorphic modular form of weight $\kappa \leq 0$ with transformation law
$$F(\gamma \tau) = \chi(\gamma) (c \tau + d)^{\kappa} F(\tau),$$
for all $\gamma = \left( \begin{smallmatrix} a & b \\ c & d \end{smallmatrix} \right)$ in some finite index subgroup of $\SL_2(\Z)$. The transformation law can be viewed alternatively in terms of the cusp $\frac{h}{k} \in\Q$. Let $h'$ be defined through the congruence $hh' \equiv -1 \pmod k$. Taking $\tau = \frac{h'}{k} + \frac{i}{kz}$ and choose $\gamma=\gamma_{h,k}  := \left( \begin{smallmatrix} h & \beta \\ k & -h' \end{smallmatrix} \right) \in \textnormal{SL}_2(\mathbb{Z})$, we obtain the equivalent transformation law
$$
F\lp \frac{h}{k}+\frac{iz}{k} \rp = \chi(\gamma_{h,k})(-iz)^{-\kappa}   F\lp \frac{h'}{k}+\frac{i}{kz} \rp.
$$
Let $F$ have the Fourier expansion at $i\infty$ given by
\[
	F(\tau) = \sum_{n\gg-\infty} a(n)q^{n+\alpha}
\]
and Fourier expansions at each rational number $0 \leq \frac{h}{k} < 1$ given by
\begin{equation*}\label{2.5}
	F|_{\kappa}\gamma_{h,k}(\tau) = \sum_{n \gg -\infty} a_{h,k}(n) q^{\frac{n + \alpha_{h,k}}{c_{k}}}.
\end{equation*}
Furthermore, let $I_\alpha$ denote the usual $I$-Bessel function.
In this framework, the relevant theorem of Zuckerman \cite[Theorem 1]{Zuckerman} may be stated as follows.
\begin{theorem}\label{Thm: Zuckerman}
	Assume the notation and hypotheses above. If $n + \alpha > 0,$ then we have
	\begin{align*}
		&a(n) =  2\pi (n+\alpha)^{\frac{\kappa-1}{2}} \sum_{k=1}^\infty \dfrac{1}{k} \sum_{\substack{0 \leq h < k \\ \gcd(h,k) = 1}}\chi(\gamma_{h,k}) e^{- \frac{2\pi i (n+\alpha) h}{k}} 
		\\ 
		&\ \times \sum_{m+\alpha_{h,k} \leq 0} a_{h,k}(m) e^{ \frac{2\pi i}{k c_{k}} (m + \alpha_{h,k}) h' } \left( \dfrac{\lvert m +\alpha_{h,k} \rvert }{c_{k}} \right)^{ \frac{1 - \kappa}{2}} I_{-\kappa+1}\left( \dfrac{4\pi}{k} \sqrt{\dfrac{(n + \alpha)\lvert m +\alpha_{h,k} \rvert}{c_{k}}} \right).
	\end{align*}
\end{theorem}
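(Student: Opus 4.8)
The plan is to prove this by the Hardy--Ramanujan--Rademacher Circle Method, in the refined form that uses Ford circles in place of straight Farey arcs so as to produce an exactly convergent series rather than a mere asymptotic. Since $F$ is holomorphic on $\HH$ and periodic under $\tau\mapsto\tau+1$, Cauchy's theorem gives, for any fixed height $\rho>0$,
$$a(n)=\int_{i\rho}^{1+i\rho}F(\tau)\,e^{-2\pi i(n+\alpha)\tau}\,d\tau.$$
The singular behaviour of $F$ as $\tau$ approaches the real line is concentrated at the rationals $h/k$, so the strategy is to replace the straight path by one built from the upper arcs of the Ford circles attached to the Farey fractions of order $N$, and then let $N\to\infty$.

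On the arc abutting $h/k$ I would set $\tau=\frac hk+\frac{iz}{k}$ and invoke the transformation law recorded above, namely $F(\frac hk+\frac{iz}{k})=\chi(\gamma_{h,k})(-iz)^{-\kappa}F(\frac{h'}{k}+\frac{i}{kz})$. Because $\mathrm{Re}(1/z)\to+\infty$ as $z\to 0$ along the arc, the argument $\frac{h'}{k}+\frac{i}{kz}$ lies deep in the cusp, so I may replace $F$ there by its Fourier expansion $F|_\kappa\gamma_{h,k}$. The terms of the principal part, those with $m+\alpha_{h,k}\le 0$, grow and supply the main contribution, while the genuinely holomorphic part stays bounded and feeds only the error. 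Each principal term then leaves a local integral of the shape
$$\chi(\gamma_{h,k})\,a_{h,k}(m)\,e^{-\frac{2\pi i(n+\alpha)h}{k}}\,e^{\frac{2\pi i}{kc_k}(m+\alpha_{h,k})h'}\int z^{-\kappa}\exp\!\left(\frac{2\pi(n+\alpha)}{k}\,z+\frac{2\pi|m+\alpha_{h,k}|}{kc_k}\,\frac1z\right)dz,$$
in which the two phase factors are exactly those appearing in the statement.

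The decisive evaluation is that, after $N\to\infty$, each Ford arc can be completed to a Hankel contour, and the contour representation of the $I$-Bessel function, $\frac{1}{2\pi i}\int z^{-\nu-1}e^{z+w/z}\,dz=w^{-\nu/2}I_\nu(2\sqrt w)$, turns each such integral into a Bessel term. With $A=\frac{2\pi(n+\alpha)}{k}$ and $B=\frac{2\pi|m+\alpha_{h,k}|}{kc_k}$ one has $2\sqrt{AB}=\frac{4\pi}{k}\sqrt{(n+\alpha)|m+\alpha_{h,k}|/c_k}$, which produces the Bessel argument, while the weight $z^{-\kappa}$ fixes the index $-\kappa+1$ together with the powers $(n+\alpha)^{\frac{\kappa-1}{2}}$ and $(|m+\alpha_{h,k}|/c_k)^{\frac{1-\kappa}{2}}$. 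Summing the assembled phases over $h$ coprime to $k$ and then over all $k$ yields the stated double sum.

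The main obstacle is the error analysis: one must show that the contributions of the holomorphic parts of the cusp expansions, together with the discrepancy between each finite Ford arc and the full Hankel contour, tend to zero as $N\to\infty$, and that the resulting $k$-sum converges. This is precisely where the Ford-circle geometry is indispensable---the radius $1/(2k^2)$ delivers exactly the cancellation and decay needed, and the hypothesis $\kappa\le 0$ prevents the weight factor $z^{-\kappa}$ from spoiling these estimates. Since the result is due to Zuckerman, I would ultimately cite his Theorem~1 directly rather than reproduce the full estimates.
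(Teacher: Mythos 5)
The paper gives no proof of this statement at all---it is quoted directly as Theorem 1 of Zuckerman's 1939 paper, which the authors note is proved by Rademacher's Circle Method---and your Ford-circle/Hankel-contour outline is precisely that argument, so your route coincides with the cited source's. One bookkeeping caveat: applying your displayed loop-integral formula literally to the weight $z^{-\kappa}$ would yield the index $\kappa-1$ rather than $-\kappa+1$; the stated index only emerges after the inversion $w=1/z$ (equivalently, after expanding $e^{Az}$ term by term against the singular kernel $e^{B/z}$ rather than the reverse), a distinction that matters because $I_{\nu}\neq I_{-\nu}$ for non-integer $\nu$.
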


\subsection{Proof of Theorem \ref{Theorem2} and Corollary \ref{Corollary3}}

We next provide proofs of both Theorem \ref{Theorem2} and Corollary \ref{Corollary3}. Our main tool is the powerful theorem of Zuckerman described in Section \ref{Sec: Zuc}.

\begin{proof}[Proof of Theorem \ref{Theorem2}]

 Using Corollary \ref{ptabGenFunctions} we have
\begin{align}\label{eqn: split}
H_t(a,b;q) = \frac{1}{b(q;q)_\infty} + \sum_{r=1}^{b-1}  \zeta_b^{-ar}H_t\left(\zeta_b^r;q\right).
\end{align}
From Theorem \ref{HanFunction} we conclude
\begin{align*}
H_t\left(\zeta_b^r;q \right) = \frac{\left(q^t;q^t\right)^t_\infty}{\left(\zeta_b^rq^t ; \zeta_b^rq^t\right)^t_\infty \left(q;q\right)_\infty}.
\end{align*}

 To obtain the transformation formula for $H_t(\zeta_b^r;q)$ at the cusp $\frac hk$, we write 
\begin{align*}
q^t = e^{\frac{2\pi i t}{k} \left( h+iz \right)} = e^{\frac{2\pi i }{\frac{k}{\gcd(k,t)}} \left( h\frac{t}{\gcd(k,t)}+i\frac{t}{\gcd(k,t)}z \right)},
\end{align*} 
where we note that $\gcd(h\frac{t}{\gcd(k,t)}, \frac{k}{\gcd(k,t)}) = 1$. Thus we may use \eqref{Eqn: transform usual pochham} with $k \mapsto \frac{k}{\gcd(k,t)}, h \mapsto h \frac{t}{\gcd(k,t)}, z \mapsto \frac{t}{\gcd(k,t)}z$ to obtain
\begin{multline}\label{Eqn: transformation of q^t}
\left(q^t;q^t\right)_\infty = \omega_{h \frac{t}{\gcd(k,t)},\frac{k}{\gcd(k,t)}}^{-1} \left(\frac{t}{\gcd(k,t)}z\right)^{-\frac{1}{2}} e^{\frac{\pi \gcd(k,t)}{12k} \left( \frac{t}{\gcd(k,t)}z -\frac{\gcd(k,t)}{t z} \right)} \\
\times \left(e^{\frac{2\pi i \gcd(k,t)}{k} \left(h_{k,t} +  i\frac{\gcd(k,t)}{tz}\right) } ; e^{\frac{2\pi i \gcd(k,t)}{k} \left(h_{k,t} +  i\frac{\gcd(k,t)}{tz}\right) }\right)_\infty,
\end{multline}
where $0 \leq h_{k,t} < \frac{k}{\gcd(k,t)}$ is defined by $ h \frac{t}{\gcd(k,t)} h_{k,t} \equiv -1 \pmod{\frac{k}{\gcd(k,t)}}$.

Similarly, for $\left(\zeta_b^rq^t;\zeta_b^rq^t\right)_\infty$ the proof of Theorem \ref{Theorem1} (2) implies that we may use \eqref{Eqn: transform usual pochham} with  $h \mapsto \frac{hbt+rk}{\lambda_{t,r,b,h,k}}, k \mapsto \frac{kb}{\lambda_{t,r,b,h,k}}, z \mapsto \frac{tbz}{\lambda_{t,r,b,h,k}}$ and obtain
\begin{multline}\label{Eqn: transformation of zeta q^t}
\left(\zeta_b^rq^t;\zeta_b^rq^t\right)_\infty = \omega_{\frac{hbt+rk}{\lambda_{t,r,b,h,k}}, \frac{kb}{\lambda_{t,r,b,h,k}}}^{-1} \left(\frac{tbz}{\lambda_{t,r,b,h,k}}\right)^{-\frac{1}{2}} e^{\frac{\pi \lambda_{t,r,b,h,k}}{12kb} \left( \frac{tbz}{\lambda_{t,r,b,h,k}} - \frac{\lambda_{t,r,b,h,k}}{tbz} \right) } \\
\times \left( e^{\frac{2\pi i \lambda_{t,r,b,h,k}}{kb} \left( h_{k,t,b,r} + i\frac{\lambda_{t,r,b,h,k}}{tbz} \right)} ; e^{\frac{2\pi i \lambda_{t,r,b,h,k}}{kb} \left( h_{k,t,b,r} + i\frac{\lambda_{t,r,b,h,k}}{tbz} \right)}  \right)_\infty,
\end{multline}
where $0 \leq h_{k,t,b,r} < \frac{kb}{\lambda_{t,r,b,h,k}}$ is defined by $ \frac{hbt+rk}{\lambda_{t,r,b,h,k}} h_{k,t,b,r} \equiv -1 \pmod{\frac{kb}{\lambda_{t,r,b,h,k}}}$.

Combining \eqref{Eqn: transform usual pochham}, \eqref{Eqn: transformation of q^t}, and \eqref{Eqn: transformation of zeta q^t} yields
\begin{multline}\label{eqn: transform}
H_t\left(\zeta_b^r;q\right)
= \Omega_{b,t}(r;h,k) \left( \frac{\gcd(k,t)b}{\lambda_{t,r,b,h,k}}\right)^{\frac{t}{2}} z^{\frac{1}{2}} e^{\frac{\pi}{12k} \left(-z + \left(1-\gcd(k,t)^2 + \frac{\lambda_{t,r,b,h,k}^2}{b^2} \right) \frac{1}{z}\right) } \\
  \times \frac{  \left(e^{\frac{2\pi i \gcd(k,t)}{k} \left(h_{k,t} +  i\frac{\gcd(k,t)}{tz}\right) } ; e^{\frac{2\pi i \gcd(k,t)}{k} \left(h_{k,t} +  i\frac{\gcd(k,t)}{tz}\right) }\right)_\infty^t  }{ \left( e^{\frac{2\pi i \lambda_{t,r,b,h,k}}{kb} \left( h_{k,t,b,r} + i\frac{\lambda_{t,r,b,h,k}}{tbz} \right)} ; e^{\frac{2\pi i \lambda_{t,r,b,h,k}}{kb} \left( h_{k,t,b,r} + i\frac{\lambda_{t,r,b,h,k}}{tbz} \right)}  \right)_\infty^t 
  	\lp e^{\frac{2\pi i}{k}\lp h'+\frac iz \rp}; e^{\frac{2\pi i}{k}\lp h'+\frac{i}{z} \rp} \rp_\infty},
\end{multline}
where
\begin{align*}
\Omega_{b,t}(r;h,k) \coloneqq \frac{\omega_{\frac{hbt+rk}{\lambda_{t,r,b,h,k}}, \frac{kb}{\lambda_{t,r,b,h,k}}}^{t} \omega_{h,k}}{\omega_{h \frac{t}{\gcd(k,t)},\frac{k}{\gcd(k,t)}}^{t}}.
\end{align*}
As usual, we define $P_t(q):=(q;q)_\infty^t=:\sum_{n=0}^\infty q_t(n)q^n$, and $P(q)^t=:\sum_{n=0}^\infty p_t(n)q^n$. Then we see that the principal part of \eqref{eqn: transform} is governed by the sum
\begin{equation*}\label{eqn: principal part}
  \sum_{\substack{n_1,n_2,n_3 \geq 0 \\ r_{k,h,t,b}(n_1,n_2,n_3)\geq 0}} q_t(n_1) p_t(n_2)  p(n_3) \zeta_{kb}^{\gcd(k,t)b h_{k,t}  n_1 + \lambda_{t,r,b,h,k} h_{k,t,b,r} n_2 + bh'n_3}  e^{\frac{\pi}{12 kz} r_{k,h,t,b}(n_1,n_2,n_3) },
\end{equation*}
where
\begin{align*}
r_{k,h,t,b}(n_1,n_2,n_3) \coloneqq 1-\gcd(k,t)^2 + \frac{\lambda_{t,r,b,h,k}^2}{b^2} - 24 \left( \frac{\gcd(k,t)^2}{t} n_1 + \frac{\lambda_{t,r,b,h,k}^2}{tb^2}n_2 + n_3 \right).
\end{align*}
 We denote the Fourier coefficients of $H_t(\zeta_b^r;q)$ by $c_{t,b,r}(n)$. Using Theorem \ref{Thm: Zuckerman} we conclude that 
\begin{multline}\label{Eqn: exact formula}
c_{t,b,r}(n) = \frac{2\pi}{n^{\frac{3}{4}}} b^{\frac{t}{2}} \sum_{k = 1}^\infty \frac{\gcd(k,t)^{\frac{t}{2}}}{k} \sum_{\substack{0 \leq h < k \\ \gcd(h,k)=1}} \Omega_{b,t}(r; h,k) e^{- \frac{2\pi i n h}{k}} \lambda_{t,r,b,h,k}^{-\frac{t}{2}} \sum_{\substack{n_1,n_2,n_3 \geq 0 \\ r_{k,h,t,b}(n_1,n_2,n_3) \geq 0}} q_t(n_1) p_t(n_2)  p(n_3) \\ \times \zeta_{kb}^{\gcd(k,t)b h_{k,t}  n_1 + \lambda_{t,r,b,h,k} h_{k,t,b,r} n_2 + bh'n_3}   \left(\frac{r_{k,h,t,b}(n_1,n_2,n_3)}{24}\right)^{\frac34} I_{\frac{3}{2}} \left(\frac{\pi}{k}\sqrt{\frac{2nr_{k,h,t,b}(n_1,n_2,n_3)}{3}} \right).
\end{multline}

 Since $x^\alpha I_{\alpha}(x)$ is monotonically increasing as $x \rightarrow \infty$ for any fixed $\alpha$, the terms which dominate asymptotically are those which have the largest possible value of $\frac1k \sqrt{r_{k,h,t,b}(n_1, n_2, n_3)}$. In particular for this we require $n_1 = n_2 = n_3 = 0$.
Note that we have $q_t(0) = p_t(0) = p(0) = 1$.
 Since the expression in question is positive we can maximize its square, that is we maximize
\begin{equation*}\label{max}
\dfrac{r_{k,h,t,b}(0,0,0)}{k^2} = \dfrac{1}{k^2} \left( 1 - \gcd(k,t)^2 + \dfrac{\lambda_{t,r,b,h,k}^2}{b^2} \right).
\end{equation*}

We consider the three possible values of $\lambda_{t,r,b,h,k}$. If $\lambda_{t,r,b,h,k} = \gcd(k,t)$, then
$$\dfrac{r_{k,h,t,b}(0,0,0)}{k^2} = \dfrac{1}{k^2} \left( 1 + \left( \dfrac{1}{b^2} - 1 \right) \gcd(k,t)^2 \right) \leq  \left(1+ \left(\frac{1}{9}-1\right)\right) < 1.$$
If $\lambda_{t,r,b,h,k} = b \gcd(k,t)$, then (noting that in this case $k>1$)
$$\dfrac{r_{k,h,t,b}(0,0,0)}{k^2} = \dfrac{1}{k^2} < 1.$$
 Finally, if $\lambda_{t,r,b,h,k} = b^2 \gcd(k,t)$, then we have
$$\dfrac{r_{k,h,t,b}(0,0,0)}{k^2} = \frac{1}{k^2}\left(1 + \left(b^2 - 1\right) \gcd(k,t)^2\right).$$
Since $b \mid\mid \dfrac{k}{\gcd(k,t)}$ in this case, we may write $\gcd(k,t) = b^\varrho d$ where $\gcd(b,d) = 1$, $b^\varrho \mid \mid t$, and $k = b^{\varrho + 1} d k_0$ for $\gcd(k_0, \frac{t}{\gcd(k,t)}) = \gcd(k_0,b) = 1$. Therefore,
$$\dfrac{r_{k,h,t,b}(0,0,0)}{k^2} = \dfrac{1 + \left(b^2-1\right) b^{2\varrho} d^2}{b^{2\varrho + 2} d^2 k_0^2},$$
which is maximized if $k_0 = 1$. In this case, we have $k = b \gcd(k,t)$ and therefore we may write
$$\dfrac{r_{k,h,t,b}(0,0,0)}{k^2} = \dfrac{1 + \left(b^2 - 1\right) \gcd(k,t)^2}{b^2 \gcd(k,t)^2} = \dfrac{b^2-1}{b^2}+\dfrac{1}{b^2 \gcd(k,t)^2}.$$
To maximize this, we need to minimize $\gcd(k,t)$, which is $\gcd(k,t) = 1$. Note that in this case
\begin{align*}
\dfrac{r_{k,h,t,b}(0,0,0)}{k^2} = 1 .
\end{align*}

Since $ht+r \equiv 0 \pmod b$, we have
\begin{align*}
\Omega_{b,t}(r;h,b) = \dfrac{\omega_{\frac{ht+r}{b}, 1}^{t} \omega_{h,b}}{\omega_{ht,b}^{t}} = \dfrac{\omega_{-r\bar{t},b}}{\omega_{-r,b}^{t}},
\end{align*}
where $\bar{t}$ denotes the inverse of $t \pmod b$. Then  by \eqref{Eqn: exact formula} we have
\begin{align*}
c_{t,b,r}(n) &\sim \dfrac{2\pi b^{\frac t2} \omega_{-r\bar{t},b} e^{\frac{2\pi i n r\bar{t}}{b}}}{(24n)^{\frac 34} \omega_{-r,b}^{t} b^{t+1}} I_{\frac 32} \left( \pi \sqrt{\frac{2n}{3}} \right)   \sim \dfrac{e^{\pi\sqrt{\frac{2n}{3}}}}{4\sqrt{3} n b^{\frac{t}{2}+1}} \dfrac{\omega_{-r\bar{t},b}}{ \omega_{-r,b}^{t}} e^{\frac{2\pi i n r\bar{t}}{b}} ,
\end{align*}
as $n \to \infty$, where we use that $I_\alpha(x) \sim \frac{e^x}{\sqrt{2\pi x}}$ as $x \rightarrow \infty$. 
Using \eqref{HR}, we obtain
\begin{align*}
\dfrac{c_{t,b,r}(n)}{p(n)} \sim \begin{cases}
 \dfrac{1}{b^{\frac{t}{2}+1}} \dfrac{\omega_{-r\bar{t},b}}{\omega_{-r,b}^{t}} e^{\frac{2\pi i n r\bar{t}}{b}} &\text{if } b \centernot | t, \\
0 &\text{otherwise.}
\end{cases}
\end{align*}
By \eqref{eqn: split}, we have
\begin{align*}
p_t(a,b;n) = \dfrac{1}{b}p(n) + \dfrac{1}{b} \sum_{r = 1}^{b-1} \zeta_b^{-ar} c_{t,b,r}(n),
\end{align*}
and so dividing through by $p(n)$ yields
\begin{align*}
\dfrac{p_t(a,b;n)}{p(n)} = \dfrac{1}{b} + \dfrac{1}{b} \sum_{r=1}^{b-1} \zeta_b^{-ar} \dfrac{c_{t,b,r}(n)}{p(n)} \sim \begin{cases}
\dfrac{1}{b} + \dfrac{1}{b^{\frac{t}{2} + 2}} \sum\limits_{r=1}^{b-1}  \dfrac{\omega_{-r\bar{t},b}}{\omega_{-r,b}^{t}} \zeta_b^{\lp n \bar{t}-a \rp r} & \text{ if } b \centernot | t, \\[+0.2cm]
\dfrac{1}{b} & \text{ otherwise}
\end{cases}
\end{align*}
as $n \to \infty$. This completes the proof in the case where $b | t$. Otherwise, setting $h = -r\bar{t}$ shows
\begin{align*}
\dfrac{p_t(a,b;n)}{p(n)} \sim \dfrac{1}{b} + \dfrac{1}{b^{\frac{t}{2} + 2}} \sum_{h=1}^{b-1}  \dfrac{\omega_{h,b}}{\omega^t_{th,b}}\zeta_b^{(at-n)h} = \dfrac{1}{b} \left( 1 + \dfrac{K(a,b,t;n)}{b^{\frac{t}{2} + 1}} \right)
\end{align*}
as $n \to \infty$. The evaluation of $K(a,b,t;n)$ in Proposition \ref{Kloosterman Sum Evaluation} then completes the proof.
\end{proof}

\begin{proof}[Proof of Corollary \ref{Corollary3}]To derive Corollary~\ref{Corollary3}, it is enough to consider the leading constants in Theorem~\ref{Theorem2}. Namely, it suffices to show that for $a,b$ fixed, $c_t(a,b;n)$ depends only on $n \pmod{b}$, which is clear from the definition of \eqref{XXX}. 
\end{proof}

\section{Proof of Theorem~\ref{Theorem4} and Corollary~\ref{Corollary5}}\label{Section4}
Here we recall the relevant generating functions for the Poincar\'e polynomials of the Hilbert schemes that pertain to Theorem~\ref{Theorem4}.

\subsection{Work of G\"ottsche and Buryak, Feigin and Nakajima}
For convenience, we let $P(X;T)$ be the usual {\it Poincar\'e polynomial}
\begin{equation*}
P(X;T):=\sum_{j}b_j(X)T^j =\sum_{j} \dim \left( H_j(X,\Q)\right) T^j,
\end{equation*}
which is the generating function for the Betti numbers of $X$.
For the various Hilbert schemes on $n$ points we consider, the work of G\"ottsche, Buryak, Feigin, and
Nakajima \cite{BuryakFeigin, IMRN, Gottsche, GottscheICM} offers the generating function
of these Poincar\'e polynomials as a formal power series in $q$. Namely, we have the following.

\begin{theorem}{\text {\rm (G\"ottsche)}}\label{HilbertGenFcn}
We have that
$$
G(T;q):=\sum_{n=0}^{\infty} P\left(\left(\C^2\right)^{[n]};T \right)q^n=\prod_{m=1}^{\infty}\frac{1}{1-T^{2m-2}q^m}=\frac{1}{ F_3(T^2; q)}.
$$
\end{theorem}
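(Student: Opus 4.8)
The theorem asserts two equalities, and the second is purely formal: unwinding the definition $F_3(\xi;q)=\prod_{n\geq1}(1-\xi^{-1}(\xi q)^n)$ at $\xi=T^2$ gives
$$
F_3\left(T^2;q\right)=\prod_{n=1}^{\infty}\left(1-T^{-2}\left(T^2q\right)^n\right)=\prod_{n=1}^{\infty}\left(1-T^{2n-2}q^n\right),
$$
so the identity $\prod_{m\geq1}(1-T^{2m-2}q^m)^{-1}=F_3(T^2;q)^{-1}$ requires no further argument. All the content lies in the first equality, i.e.\ in computing the Betti numbers of $(\C^2)^{[n]}$, and that is where I would focus.

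My plan is to produce a torus-fixed-point (Bia{\l}ynicki--Birula) description of the homology. First I would use the $(\C^{\times})^2$-action on $\C^2$ by scaling the coordinates, which lifts to $(\C^2)^{[n]}$; its fixed points are precisely the monomial ideals of colength $n$ in $\C[x,y]$, and these are in canonical bijection with partitions $\lambda\vdash n$ (the boxes of the Young diagram record the monomials spanning the length-$n$ quotient). Choosing a generic one-parameter subtorus $t\mapsto(t^{w_1},t^{w_2})$ singles out these isolated fixed points and produces a decomposition of $(\C^2)^{[n]}$ into affine cells, one for each $\lambda$.

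The combinatorial heart of the argument is the determination of the homological degree attached to each $\lambda$. Here I would invoke the Ellingsrud--Str{\o}mme description of the tangent space $\mathrm{Hom}(I_\lambda,\C[x,y]/I_\lambda)$ as a sum of $2n$ torus weights expressed through the arm and leg lengths of the boxes of $\lambda$; interpreting the associated $S^1$-moment map as a (perfect, even-index) Morse function, the class coming from $\lambda$ sits in homological degree twice the number of tangent weights that are negative for the chosen subtorus. A direct bookkeeping with the arm--leg formula shows this number of negative weights equals $n-\ell(\lambda)$, where $\ell(\lambda)$ is the number of parts of $\lambda$; one may sanity-check the small cases $(\C^2)^{[1]}=\C^2$ (giving $P=1$) and $(\C^2)^{[2]}$ (giving $P=1+T^2$). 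Since the Morse function is perfect, the odd homology vanishes and
$$
P\left(\left(\C^2\right)^{[n]};T\right)=\sum_{\lambda\vdash n}T^{2(n-\ell(\lambda))}.
$$

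Finally I would repackage the generating function by recording a partition through the multiplicities $k_m:=\#\{j:\lambda_j=m\}$, so that $|\lambda|=\sum_m m\,k_m$ and $n-\ell(\lambda)=\sum_m(m-1)k_m$. Summing over all partitions factors as an Euler product,
$$
\sum_{n=0}^{\infty}P\left(\left(\C^2\right)^{[n]};T\right)q^n=\sum_{\lambda}q^{|\lambda|}T^{2(|\lambda|-\ell(\lambda))}=\prod_{m=1}^{\infty}\sum_{k\geq0}\left(T^{2m-2}q^m\right)^k=\prod_{m=1}^{\infty}\frac{1}{1-T^{2m-2}q^m},
$$
which with the opening remark completes the proof. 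I expect the genuine obstacle to be the dimension (Morse-index) computation: verifying that the number of negative tangent weights is exactly $n-\ell(\lambda)$ for a generic subtorus requires the explicit weight formula at monomial ideals and some care with the non-compactness of $(\C^2)^{[n]}$ (equivalently, with properness of the moment map), whereas the fixed-point identification and the final factorization are routine.
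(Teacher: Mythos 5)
The paper offers no proof of this statement at all: it is quoted as a known theorem of G\"ottsche, with references to \cite{Gottsche, GottscheICM} (the underlying computation for $S=\C^2$ goes back to Ellingsrud--Str\o mme, and is also the content of Chapter 5 of \cite{Hilbert_schemes}). So you are supplying an argument where the authors supply a citation, and the argument you sketch is precisely the standard one: identify the torus-fixed points of $(\C^2)^{[n]}$ with monomial ideals, hence with partitions $\lambda\vdash n$; use a generic cocharacter to get an affine cell decomposition; read off the cell dimensions from the arm--leg weight decomposition of the tangent space $\mathrm{Hom}(I_\lambda,\C[x,y]/I_\lambda)$; and resum. Your formal manipulations are all correct: $F_3(T^2;q)=\prod_{n\geq 1}(1-T^{2n-2}q^n)$ is immediate from the definition, and the Euler-product repackaging of $\sum_\lambda q^{|\lambda|}T^{2(|\lambda|-\ell(\lambda))}$ via multiplicities is right. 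Two small points of care, both of which you essentially flag yourself. First, since $(\C^2)^{[n]}$ is non-compact, the ``perfect Morse function'' language is better replaced by the algebraic Bia{\l}ynicki-Birula decomposition: one chooses the cocharacter so that every point flows into the punctual Hilbert scheme supported at the origin (which is proper), and then the resulting filterable affine paving forces the homology to be free, concentrated in even degrees, with $b_{2k}$ equal to the number of cells of the appropriate index. Second, the count of negative tangent weights at $I_\lambda$ comes out as $n-\ell(\lambda)$ or $n-\lambda_1$ depending on which generic cocharacter you pick (which of the two coordinate directions dominates); conjugation of partitions is a bijection on $\{\lambda\vdash n\}$ interchanging the two statistics, so the Poincar\'e polynomial, and hence the generating function, is the same either way. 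With those caveats your outline is a correct proof of the quoted theorem, not merely a plausibility argument.
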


\begin{theorem}{\text {\rm (Buryak and Feigin)}}\label{QuasiHilbertGenFcn}
If $\alpha, \beta\in \N$ are relatively prime, then we have that
$$
G_{\alpha,\beta}(T;q):=\sum_{n=0}^{\infty} P\left(\left(\left(\C^2\right)^{[n]}\right)^{T_{\alpha,\beta}};T\right)q^n=
\frac{1}{F_1(T^2;q^{\alpha+\beta})} \prod_{m=1}^{\infty}\frac{1-q^{(\alpha+\beta)m}}{1-q^m}.
$$
\end{theorem}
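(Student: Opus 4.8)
The statement is due to Buryak and Feigin; the plan is to reconstruct it by a Bia{\l}ynicki--Birula (torus--localization) argument in the spirit of Ellingsrud--Str{\o}mme and Buryak--Feigin--Nakajima, reducing the computation of the Betti numbers of the fixed locus
$$X_n:=\left(\left(\C^2\right)^{[n]}\right)^{T_{\alpha,\beta}}$$
to a combinatorial generating-function identity. Recall that $(\C^2)^{[n]}$ is smooth of dimension $2n$ and parametrizes ideals $I\subseteq \C[x,y]$ of colength $n$; the torus $(\C^\times)^2$ acts by scaling the coordinates, its isolated fixed points are exactly the monomial ideals $I_\lambda$ indexed by partitions $\lambda\vdash n$, and $X_n$, being the fixed locus of a subtorus acting on a smooth variety, is itself smooth. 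First I would fix a generic one-parameter subgroup $\rho=\{(t^{p},t^{r})\}$ with $p,r>0$ such that $\rho$ and $T_{\alpha,\beta}$ together generate $(\C^\times)^2$. Then $\lim_{t\to 0}$ exists for every point (Gr\"obner degeneration to the initial monomial ideal), the $\rho$-fixed points of $X_n$ are precisely the $I_\lambda$, and the attracting cells give an affine paving of $X_n$. Consequently all odd cohomology vanishes and
$$P\left(X_n;T\right)=\sum_{\lambda\vdash n}T^{2d_\lambda},$$
where $d_\lambda$ is the number of $\rho$-positive weights on $T_{I_\lambda}X_n=\left(T_{I_\lambda}(\C^2)^{[n]}\right)^{T_{\alpha,\beta}}$.

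Next I would compute $d_\lambda$ from the Ellingsrud--Str{\o}mme/Nakajima character of the tangent space: with a standard convention for the arm $a(\square)$ and leg $\ell(\square)$, each box $\square\in\lambda$ contributes two tangent directions whose $(\C^\times)^2$-weights become, under $(t_1,t_2)=(t^{\alpha},t^{\beta})$, the integers
$$w_1(\square)=\alpha(\ell(\square)+1)-\beta\,a(\square),\qquad w_2(\square)=-\alpha\,\ell(\square)+\beta(a(\square)+1).$$
Since $w_1(\square)+w_2(\square)=\alpha+\beta=:s>0$, at most one of them vanishes, so $\square$ contributes a $T_{\alpha,\beta}$-fixed direction precisely when $w_1(\square)=0$ or $w_2(\square)=0$. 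A one-line computation shows that along such a fixed direction the $\rho$-weight has sign $+\operatorname{sgn}(p\beta-r\alpha)$ when $w_1(\square)=0$ and $-\operatorname{sgn}(p\beta-r\alpha)$ when $w_2(\square)=0$; choosing $\rho$ generic with $p\beta-r\alpha>0$ therefore gives
$$d_\lambda=\#\left\{\square\in\lambda:\ a(\square)=\alpha j,\ \ell(\square)=\beta j-1\ \text{for some } j\ge 1\right\},$$
where I used $\gcd(\alpha,\beta)=1$. Every such box has hook length $a(\square)+\ell(\square)+1=sj$, hence divisible by $s$. (Interchanging the roles of $t_1,t_2$ replaces $(\alpha,\beta)$ by $(\beta,\alpha)$, which will not affect the final answer.)

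It then remains to prove the combinatorial identity
$$\sum_{n\ge 0}q^n\sum_{\lambda\vdash n}T^{2d_\lambda}=\prod_{n\ge1}\frac{1}{1-T^2q^{sn}}\,\prod_{\substack{m\ge1\\ s\nmid m}}\frac{1}{1-q^m},$$
whose right-hand side equals $\tfrac{1}{F_1(T^2;q^{s})}\prod_{m\ge 1}\tfrac{1-q^{sm}}{1-q^m}$ by the elementary rewriting $\prod_{m}\tfrac{1-q^{sm}}{1-q^m}=\prod_{s\nmid m}(1-q^m)^{-1}$, which is exactly the asserted formula with $s=\alpha+\beta$. I expect this identity to be the main obstacle. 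The delicate point is that $d_\lambda$ depends on $\alpha$ and $\beta$ individually, whereas the right-hand side depends only on $s$; hence the identity cannot hold box-by-box and must be proved only after summing over all $\lambda\vdash n$. Expanding the right-hand product by separating the parts of a partition that are divisible by $s$ from those that are not shows it equals $\sum_\lambda T^{2c_s(\lambda)}q^{|\lambda|}$, where $c_s(\lambda)$ is the number of parts of $\lambda$ divisible by $s$; so the task reduces to proving that the two statistics $d_\lambda$ and $c_s(\lambda)$ are \emph{equidistributed} over partitions of each $n$. My plan here is to encode $\lambda$ by a beta-set on an $s$-runner abacus: the boxes counted by $d_\lambda$ correspond to prescribed bead/gap configurations, and a transfer between runners (or an explicit bijection built from the Littlewood $s$-core/$s$-quotient decomposition) should convert the $d_\lambda$-count into the parts-divisible-by-$s$ count while preserving $|\lambda|$, thereby factoring the generating function into the claimed product. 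Verifying that this reorganization is weight-preserving, and in particular that the $(\alpha,\beta)$-dependence washes out in the sum, is the crux of the argument.
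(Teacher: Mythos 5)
You are trying to prove a statement that the paper itself does not prove: Theorem \ref{QuasiHilbertGenFcn} is quoted verbatim from Buryak--Feigin \cite{BuryakFeigin} (see also \cite{IMRN}) and is used as an input, so there is no in-paper argument to compare against. Judged on its own terms, your geometric reduction is sound and is essentially the Bia{\l}ynicki--Birula/localization setup that Buryak, Feigin and Nakajima use: the fixed points of a generic $\rho$ on the smooth fixed locus are the monomial ideals, the attracting cells give an affine paving, and the Ellingsrud--Str{\o}mme tangent character correctly yields that the cell dimension $d_\lambda$ counts boxes with $a(\square)=\alpha j$, $\ell(\square)=\beta j-1$. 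Your rewriting of the right-hand side as $\sum_\lambda T^{2c_s(\lambda)}q^{|\lambda|}$, with $c_s(\lambda)$ the number of parts divisible by $s=\alpha+\beta$, is also correct.

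The genuine gap is that the entire content of the theorem is the combinatorial identity you defer: the equidistribution, over $\lambda\vdash n$, of the statistic $d_\lambda$ with the statistic $c_s(\lambda)$. You correctly observe that $d_\lambda$ depends on $(\alpha,\beta)$ individually while the product side depends only on $s$, so no box-by-box or part-by-part correspondence can work; but you then only assert that an $s$-runner abacus or core/quotient transfer ``should'' convert one count into the other. That is a plan, not an argument, and it is exactly the step where the difficulty lives --- note in particular that the boxes counted by $d_\lambda$ form a proper, $(\alpha,\beta)$-dependent subset of the boxes with hook length divisible by $s$, so the standard Littlewood $s$-core/$s$-quotient bijection (which tracks all $s$-hooks) does not directly apply and must be refined. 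Until that bijection (or an equivalent generating-function manipulation, as in \cite{IMRN}) is actually constructed and shown to preserve $|\lambda|$ while exchanging the two statistics, the proof is incomplete; everything before it is correct but is the routine part of the argument.
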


\begin{remark}
The Poincar\'e polynomials in these cases only have even degree terms.  The odd index
Betti numbers are always zero. Moreover, letting $T=1$ in these generating functions give Euler's generating function for $p(n).$ Therefore, we directly see that
$$
p(n)=P\left(\left(\C^2\right)^{[n]};1 \right)=P\left(\left(\left(\C^2\right)^{[n]}\right)^{T_{\alpha,\beta}};1\right),
$$
confirming (\ref{decompositionBetti}). Of course, the proofs of these theorems begin with partitions of size $n$.
\end{remark}

Arguing as in the proof of Corollary~\ref{ptabGenFunctions}, we obtain the following generating functions
for the modular sums of Betti numbers.

\begin{corollary}\label{HilbertModularGeneratingFunctions}
For $0\leq a<b$, the following are true.

\noindent
\normalfont{(1)} We have that
$$
\sum_{n=0}^{\infty}B\left(a,b; \left(\C^2\right)^{[n]}\right)q^n=\frac{1}{b}\sum_{r=0}^{b-1}\zeta_b^{-ar}G(\zeta_b^r;q).
$$

\noindent
\normalfont{(2)} If $\alpha, \beta\in \N$ are relatively prime, then we have
$$
\sum_{n=0}^{\infty}B\left(a,b; \left(\left(\C^2\right)^{[n]}\right)^{T_{\alpha,\beta}}\right)q^n=\frac{1}{b}\sum_{r=0}^{b-1}\zeta_b^{-ar}G_{\alpha,\beta}(\zeta_b^r;q).
$$

\end{corollary}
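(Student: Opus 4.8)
The plan is to imitate the proof of Corollary~\ref{ptabGenFunctions} essentially verbatim, the only change being that the role played there by the statistic $\#\mathcal{H}_t(\lambda)$ is now played by the homological degree $j$ recorded by the Poincar\'e polynomial. The engine in both cases is the orthogonality relation for $b$-th roots of unity, namely that $\frac1b\sum_{r=0}^{b-1}\zeta_b^{(j-a)r}$ equals $1$ when $j\equiv a\pmod{b}$ and $0$ otherwise.

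First I would recall that for any space $X$ the Poincar\'e polynomial is the degree generating function $P(X;T)=\sum_j b_j(X)T^j$, so that substituting the root of unity $T=\zeta_b^r$ yields $P(X;\zeta_b^r)=\sum_j b_j(X)\zeta_b^{rj}$. Weighting by $\zeta_b^{-ar}$ and averaging over $r$, orthogonality collapses the inner sum to the congruence class $j\equiv a\pmod{b}$:
\begin{align*}
\frac1b\sum_{r=0}^{b-1}\zeta_b^{-ar}P(X;\zeta_b^r)=\frac1b\sum_j b_j(X)\sum_{r=0}^{b-1}\zeta_b^{(j-a)r}=\sum_{j\equiv a\pmod{b}} b_j(X).
\end{align*}
Specializing $X=(\C^2)^{[n]}$, the right-hand side is exactly $B(a,b;(\C^2)^{[n]})$.

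To finish part (1), I would multiply by $q^n$, sum over $n\geq0$, and interchange the finite $r$-sum with the sum over $n$, which is legitimate at the level of formal power series since each coefficient of $q^n$ is a finite sum. Recognizing $\sum_{n\geq0}P((\C^2)^{[n]};\zeta_b^r)q^n=G(\zeta_b^r;q)$ via G\"ottsche's Theorem~\ref{HilbertGenFcn} then gives the claimed identity. Part (2) is proved by the identical argument applied to the fixed-point loci $((\C^2)^{[n]})^{T_{\alpha,\beta}}$, using Buryak and Feigin's Theorem~\ref{QuasiHilbertGenFcn} to identify $\sum_{n\geq0}P(((\C^2)^{[n]})^{T_{\alpha,\beta}};\zeta_b^r)q^n=G_{\alpha,\beta}(\zeta_b^r;q)$.

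There is no genuine obstacle here: the result is a bookkeeping consequence of roots-of-unity orthogonality, and the only point requiring minimal care is that $T$ in the Poincar\'e polynomial tracks the homological index $j$ \emph{precisely}, so that the root-of-unity filter isolates the intended residue class $a\pmod{b}$ rather than a shifted index such as $j-n$. Since G\"ottsche's and Buryak--Feigin's theorems are stated directly in terms of $P(\,\cdot\,;T)$ with $T$ recording $j$, this matching is automatic and no renormalization is needed.
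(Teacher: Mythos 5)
Your proposal is correct and matches the paper's approach: the paper proves this corollary by simply noting it follows "arguing as in the proof of Corollary~\ref{ptabGenFunctions}," i.e., by the same roots-of-unity orthogonality filter applied to the Poincar\'e polynomial evaluated at $T=\zeta_b^r$, combined with Theorems~\ref{HilbertGenFcn} and \ref{QuasiHilbertGenFcn}. Your added remark that $T$ records the homological index $j$ exactly (so no shift or renormalization is needed) is a correct and worthwhile observation, but the argument is the same.
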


\subsection{Wright's variant of the Circle Method}

The classical Circle Method, as utilized by Hardy--Ramanujan and many others, derives asymptotic or exact formulas for the Fourier coefficients of $q$-series by leveraging modular properties of the generating functions. More recently, a variation of the Circle Method due to Wright has grown increasingly important in number theory. For the proof of Theorem \ref{Theorem4} and Corollary \ref{Corollary5}, we use Wright's variation, which obtains asymptotic formulas for generating functions carrying suitable analytic properties.

\begin{remark}
	 Ngo and Rhoades \cite{NgoRhoades} proved a more restricted version\footnote{We note that hypothesis 4 in Proposition 1.8 of \cite{NgoRhoades} is stated differently than our hypothesis 2 in Proposition \ref{WrightCircleMethod} below.} of the following proposition where the generating function $F$ split as two functions. Our purposes do not require such a splitting, and so we state the proposition in terms of a single function $F$.
\end{remark}

\begin{prop} \label{WrightCircleMethod}
	Suppose that $F(q)$ is analytic for $q = e^{-z}$ where $z=x+iy \in \C$ satisfies $x > 0$ and $|y| < \pi$, and suppose that $F(q)$ has an expansion $F(q) = \sum_{n=0}^\infty c(n) q^n$ near 1. Let $c,N,M>0$ be fixed constants. Consider the following hypotheses:
	
	\begin{enumerate}[leftmargin=*]
		\item[\rm(1)] As $z\to 0$ in the bounded cone $|y|\le Mx$ (major arc), we have
		\begin{align*}
			F(e^{-z}) = z^{B} e^{\frac{A}{z}} \left( \sum_{j=0}^{N-1} \alpha_j z^j + O_\delta\left(|z|^N\right) \right),
		\end{align*}
		where $\alpha_s \in \C$, $A\in \R^+$, and $B \in \R$. 
		
		\item[\rm(2)] As $z\to0$ in the bounded cone $Mx\le|y| < \pi$ (minor arc), we have 
		\begin{align*}
			\lvert	F(e^{-z}) \rvert \ll_\delta e^{\frac{1}{\mathrm{Re}(z)}(A - \kappa)}.
		\end{align*}
		for some $\kappa\in \R^+$.
	\end{enumerate}
	If  {\rm(1)} and {\rm(2)} hold, then as $n \to \infty$ we have for any $N\in \R^+$ 
	\begin{align*}
		c(n) = n^{\frac{1}{4}(- 2B -3)}e^{2\sqrt{An}} \lp \sum\limits_{r=0}^{N-1} p_r n^{-\frac{r}{2}} + O\left(n^{-\frac N2}\right) \rp,
	\end{align*}
	where $p_r := \sum\limits_{j=0}^r \alpha_j c_{j,r-j}$ and $c_{j,r} := \dfrac{(-\frac{1}{4\sqrt{A}})^r \sqrt{A}^{j + B + \frac 12}}{2\sqrt{\pi}} \dfrac{\Gamma(j + B + \frac 32 + r)}{r! \Gamma(j + B + \frac 32 - r)}$. 
\end{prop}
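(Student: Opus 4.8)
The plan is to realize Wright's variant of the circle method through the saddle point method. By Cauchy's theorem I write
\[
c(n) = \frac{1}{2\pi}\int_{-\pi}^{\pi} F\left(e^{-z}\right) e^{nz}\, dy,
\]
where $z = x + iy$ traverses the circle of radius $e^{-x}$. The exponent $nz + A/z$ governing hypothesis (1) has a saddle on the positive real axis where $n - A/z^2 = 0$, namely at $z = \sqrt{A/n}$, so I fix the radius $x := \sqrt{A/n}$; at this value $nx + A/x = 2\sqrt{An}$ produces the anticipated exponential main term. I then split the contour into the major arc $|y| \le Mx$ and the minor arc $Mx \le |y| \le \pi$, matching the two cones in the hypotheses.

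On the minor arc, hypothesis (2) gives $|F(e^{-z})| \ll e^{(A-\kappa)/\mathrm{Re}(z)} = e^{(A-\kappa)/x}$, and since $|e^{nz}| = e^{nx}$ this portion of the integral is $\ll e^{(A-\kappa)/x + nx} = e^{2\sqrt{An} - \kappa\sqrt{n/A}}$. Because $\kappa > 0$ and $1/x = \sqrt{n/A}$ grows like $\sqrt{n}$, this is exponentially smaller than $e^{2\sqrt{An}}$, hence negligible at every order of the claimed expansion.

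On the major arc I insert hypothesis (1), so the integral splits as $\sum_{j=0}^{N-1} \alpha_j \cdot \frac{1}{2\pi}\int_{|y|\le Mx} z^{B+j} e^{nz + A/z}\, dy$ together with the contribution of the $O(|z|^N)$ remainder, which upon integration is of the order $O(n^{-N/2})$ demanded by the conclusion. The crux is the evaluation of the model integrals: I extend each truncated segment to the full vertical line $x + i\R$ at the cost of an exponentially small error and recognize the result through the Hankel (Schl\"afli) contour representation of the modified Bessel function, obtaining
\[
\frac{1}{2\pi i}\int_{x - i\infty}^{x + i\infty} z^{B+j} e^{nz + A/z}\, dz = \left(\frac{A}{n}\right)^{\frac{B+j+1}{2}} I_{-(B+j+1)}\left(2\sqrt{An}\right).
\]
Applying the standard asymptotic expansion $I_\nu(w) \sim \frac{e^w}{\sqrt{2\pi w}}\sum_{r\ge 0} (-1)^r \frac{\Gamma(\nu + r + \frac{1}{2})}{r!\, 2^r\, \Gamma(\nu - r + \frac{1}{2})}\, w^{-r}$ (with $I_{-\nu} \sim I_\nu$ up to exponentially small terms) at $w = 2\sqrt{An}$ and $\nu = B+j+1$, the $(j,r)$-term carries the power $n^{\frac{1}{4}(-2B-3) - \frac{j+r}{2}}$. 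Re-indexing by $r' = j+r$ collects these into $p_{r'} = \sum_{j=0}^{r'} \alpha_j c_{j,\, r'-j}$, and matching the constants --- the powers of $A$, the factors of $2$ and $\sqrt{\pi}$, and the Gamma quotient, noting $\nu + \tfrac{1}{2} = j + B + \tfrac{3}{2}$ --- reproduces exactly the stated $c_{j,r}$.

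The main obstacle is the uniform control of the three error sources so that they collapse into the single $O(n^{-N/2})$ term: (i) justifying the extension of each truncated major-arc segment to the infinite Bromwich line, where the integrand does not decay in absolute value and convergence is only conditional, so that the exact Bessel integral can be invoked via deformation to the Hankel contour; (ii) showing that the integrated $O(|z|^N)$ remainder from hypothesis (1) genuinely has size $n^{-N/2}$ relative to the main term; and (iii) truncating the Bessel asymptotic consistently at the same order. Everything else reduces to the routine saddle-point bookkeeping sketched above.
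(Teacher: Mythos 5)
Your argument follows the paper's proof essentially verbatim: Cauchy's theorem on the circle $|q|=e^{-\sqrt{A/n}}$, the minor arc killed by hypothesis (2), the major-arc remainder from hypothesis (1) absorbed into the $O(n^{-N/2})$ error, and the model integrals $\tfrac{1}{2\pi i}\int z^{B+j}e^{nz+A/z}\,dz$ evaluated asymptotically through the modified Bessel function $I_{-(B+j+1)}(2\sqrt{An})$ --- the paper simply outsources this last evaluation to Lemma 3.7 of Ngo--Rhoades, which you unpack by hand (correctly: your constants reproduce the stated $c_{j,r}$). The proposal is correct and takes the same route as the paper.
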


\begin{proof}
	By Cauchy's Theorem, we have 
	$$
		c(n)=\frac{1}{2\pi i} \int_{\mathcal{C}} \frac{F(q)}{q^{n+1}} dq,
	$$
	where $\mathcal{C}$ is a circle centered at the origin inside the unit circle surrounding zero exactly once counterclockwise. We choose $|q|=e^{-\lambda}$  with $\lambda:=\sqrt{\frac{A}{n}}$. Set 
	$$
		A_j(n):= \frac{1}{2\pi i} \int_{\mathcal{C}_1} \frac{z^{B+j} e^{\frac Az}}{q^{n+1}}dq,
	$$
	where $\mathcal{C}_1$ is the major arc. We claim that
	\begin{equation}\label{main a}
		c(n)= \sum_{j=0}^{N-1} \alpha_j A_j(n) + O\lp n^{\frac12 (-B-N-1)} e^{2\sqrt{An}} \rp.
	\end{equation}
	For this write
	$$
		c(n)-\sum_{j=0}^{N-1} \alpha_j A_j(n)=\mathcal{E}_1(n) + \mathcal{E}_2(n),
	$$
	where
	\begin{align*}
		\mathcal{E}_1(n):=\frac{1}{2\pi i} \int_{\mathcal{C}_2} \frac{F(q)}{q^{n+1}} dq, \quad
		\mathcal{E}_2(n):=\frac{1}{2\pi i} \int_{\mathcal{C}_1} \lp F(q)z^{-B} e^{-\frac Az}  - \sum_{j=0}^{N-1} \alpha_jz^j\rp z^{B} e^{\frac Az} q^{-n-1}dq,
	\end{align*}
	where $\mathcal{C}_2$ is the minor arc.
	
	We next bound $\mathcal{E}_1(n)$ and $\mathcal{E}_2(n)$. For $\mathcal{E}_2(n)$ we have, by condition (1)
	$$
		\left| F\lp e^{-z}\rp z^{-B}e^{-\frac Az}-\sum_{j=0}^{N-1}\alpha_j z^j  \right| \ll_\delta |z|^N.
	$$
	Note that on $\mathcal{C}$, $x=\lambda$ and that
	$$
		\left| \exp \lp \frac Az+nz \rp\right| \leq \exp\lp 2 \sqrt{An}\rp.
	$$
	Since the length of $\mathcal{C}_1$ is $\approx \lambda$, we obtain
	$$
		\mathcal{E}_2(n) \ll \lambda |z|^{N+B} \exp\lp 2\sqrt{An} \rp.
	$$
	On $\mathcal{C}_1$, we have $y \ll \lambda$, implying $|z|\sim\frac{1}{\sqrt{n}}$. This gives $\mathcal{E}_1(n)$ satisfies the bound required in \eqref{main a}.
	
	On $\mathcal{C}_2$, we estimate
	$$
		|F(q)| \ll e^{\frac{1}{\lambda}(A - \kappa)}.
	$$ 
	Therefore,
	$$
		\mathcal{E}_1(n) \ll |F(q)| |q|^{-n} \ll e^{\frac{1}{\lambda}(A - \kappa) + n\lambda} \ll e^{(2 - \kappa) \sqrt{An}}.
	$$
	The required bound \eqref{main a} follows. Using Lemma 3.7 of \cite{NgoRhoades} to estimate the integrals $A_j(n)$ now gives the claim.
\end{proof}

\subsection{Proof of Theorem~\ref{Theorem4} and Corollary \ref{Corollary5}} We now apply the Circle Method to the generating functions
in Theorems~\ref{HilbertGenFcn} and \ref{QuasiHilbertGenFcn}.

\begin{proof}[Proof of Theorem \ref{Theorem4}]
	Using first Corollary \ref{HilbertModularGeneratingFunctions} (1) and then Theorem \ref{HilbertGenFcn}, we obtain
	\[
		H_{a,b}(q) := \sum_{n=0}^\infty B\left(a,b;\left(\C^2\right)^{[n]}\right)q^n = \frac1b\left(1+\delta_{2\mid b}\right) \frac{1}{(q;q)_\infty} + \frac1b \sum_{\substack{1\le r\le b-1\\r\ne\frac b2}} \zeta_b^{-ar} \frac{1}{F_3\left(\zeta_b^{2r};q\right)}.
	\]
	We want to apply Proposition \ref{WrightCircleMethod}. For this we first show ($M>0$ arbitrary) that we have as $z\to0$ on the major arc $|y|\le Mx$
	\begin{equation}\label{major}
		H_{a,b}\left(e^{-z}\right) = \frac1b\left(1+\delta_{2\mid b}\right) \sqrt{\frac{z}{2\pi}} e^{\frac{\pi^2}{6z}} (1+O(|z|)).
	\end{equation}
	Recall that we have $P(q):=\sum_{n=0}^\infty p(n)q^n=(q;q)_\infty^{-1}$. First we note the well-known bound (for $|y|\le Mx$, as $z\to0$)
	\[
		P\left(e^{-z}\right) = \sqrt{\frac{z}{2\pi}} e^{\frac{\pi^2}{6z}} (1+O(|z|)).
	\]
	Next we consider $\frac{1}{F_3(\zeta_b^{2r};q)}$ for $\zeta_b^{2r}\ne1$ on the major arc. By Theorem \ref{Theorem1} (3)
	\[
		\frac{1}{F_3\left(\zeta_b^{2r};e^{-z}\right)} = \frac{\left(b^2z\right)^{\frac1b-\frac12}\Gamma\left(\frac1b\right)}{\sqrt{2\pi}} \prod_{j=1}^{b-1} \left(1-\zeta_b^{2rj}\right)^{\frac jb} e^{\frac{\pi^2}{6b^2z}}(1+O(|z|)) \ll |z|^{-N} e^\frac{\pi^2}{6z}
	\]
	for any $N\in\N$. This gives \eqref{major}.
	
	Next we show that we have as $z\to0$ on the minor arc $|y|\ge Mx$
	\begin{equation}\label{minor}
		H_{a,b}\left(e^{-z}\right) \ll e^{\left(\frac{\pi^2}{6}-\kappa\right)\frac1x}.
	\end{equation}
	It is well-known (and follows by logarithmic differentiation) that for some $\mathcal{C}>0$
	\[
		\left|P\left(e^{-z}\right)\right| \le x^{\frac12} e^{\frac{\pi}{6x}-\frac{\mathcal{C}}{x}}.
	\]
	We are left to bound $\frac{1}{F_3(\zeta_b^{2r};q)}$ on the minor arc. For this we write
	\[
		\Log\left(\frac{1}{F_3\left(\zeta_b^{2r};q\right)}\right) = \sum_{m=1}^\infty \frac{q^m}{m\left(1-\zeta_b^{2rm}q^m\right)}.
	\]
	Noting that $|1-\zeta_b^{2rm}q^m|\ge1-|q|^m$, we obtain
	\[
		\left|\mathrm{Log}\left(\frac{1}{F_3\left(\zeta_b^{2r};q\right)}\right)\right| \le \left|\frac{q}{1-\zeta_b^{2r}q}\right|-\frac{|q|}{1-|q|}+\log(P|q|)
	\]
	so we are done once we show that
	\[
		\left|\frac{q}{1-\zeta_b^{2r}q}\right| - \frac{|q|}{1-|q|} < -\frac{\mathcal{C}}{x}
	\]
	for some $\mathcal{C}>0$. Note that
	\[
		\frac{1}{1-\zeta_b^{2r}q} = O_{b,r}(1)
	\]
	and thus
	\[
		\left|\frac{q}{1-\zeta_b^{2r}q}\right| - \frac{|q|}{1-|q|} = -\frac1x+O_{b,r}(1)
	\]
	giving \eqref{minor}. The claim of (1) now follows by Proposition \ref{WrightCircleMethod}.\\
	(2) By Corollary \ref{HilbertModularGeneratingFunctions} (2) and Theorem \ref{QuasiHilbertGenFcn} we have
	\[
		\mathcal{H}_{a,b,\alpha,\beta}(q) := \sum_{n=0}^\infty B\left(a,b;\left(\left(\C^2\right)^{[n]}\right)^{T_{\alpha,\beta}}\right) q^n = \frac1b (1+\delta_{2\mid b}) P(q) + \frac1b \sum_{\substack{1\le r\le b-1\\r\ne\frac b2}} \zeta_b^{-ar} \frac{\left(q^{\alpha+\beta};q^{\alpha+\beta}\right)_\infty}{F_1\left(\zeta_b^{2r};q^{\alpha+\beta}\right)(q;q)_\infty}.
	\]
	We show the same bounds as in (1) with the only additional condition that
	\begin{align}\label{eqn: definition of M}
		M <  \frac{2\pi^2}{b^2} \min_{1\le r<\frac b2} \frac{r(b-2r)}{\left|\sum_{n=1}^\infty \frac{\sin\left(\frac{4\pi r}{b}\right)}{n^2}\right|}.
	\end{align}
	We only need to prove the bounds for
	\[
		\mathcal{H}_{\alpha,\beta}(q) := \frac{\left(q^{\alpha+\beta};q^{\alpha+\beta}\right)_\infty}{F_1\left(\zeta_b^{2r};q^{\alpha+\beta}\right)(q;q)_\infty}.
	\]
	for $\zeta_b^{2r}\ne1$. We may assume without loss of generality that $1\le2r<b$. We start by showing the major arc bound. By Theorem \ref{Theorem1} (1) and \eqref{Eta asymptotic}, we have, for $z$ on the major arc
	\[
		\mathcal{H}_{\alpha,\beta}(q) \ll \left|e^{\frac{\pi^2}{6z}-\frac{\pi^2}{6(\alpha+\beta)z}+\frac{\zeta_b^{2r}\phi\left(\zeta_b^{2r},2,1\right)}{(\alpha+\beta)z}}\right|.
	\]
	So to prove the major arc bound we need to show that for some $\varepsilon>0$
	\[
		\left(\frac{\pi^2}{6}-\varepsilon\right) \mathrm{Re}\left(\frac1z\right) - \mathrm{Re}\left(\frac{\zeta_b^{2r}\phi\left(\zeta_b^{2r},2,1\right)}{z}\right) > 0.
	\]
	We first rewrite
	\[
		\zeta_b^{2r} \phi\left(\zeta_b^{2r},2,1\right) = \sum_{n=1}^\infty \frac{\cos\left(\frac{4\pi rn}{b}\right)+i\sin\left(\frac{4\pi rn}{b}\right)}{n^2}.
	\]
	Now note the evaluation for $0\le\theta\le2\pi$ (see e.g. \cite{Za})
	\[
		\sum_{n=1}^\infty \frac{\cos(n\theta)}{n^2} = \frac{\pi^2}{6}-\frac{\theta(2\pi-\theta)}{4}.
	\]
	Thus we are left to show
	\[
		\frac{2\pi^2r}{b^2}(b-2r)x \ge \left|\sum_{n=1}^\infty \frac{\sin\left(\frac{4\pi rn}{b}\right)}{n^2}\right|y.
	\]
	This follows by the definition of $M$ given in \eqref{eqn: definition of M}.
\end{proof}

\begin{proof}[Proof of Corollary~\ref{Corollary5}] This follows immediately from Theorem \ref{Theorem4} and the definition of $d(a,b)$ in \eqref{XXXX}.
\end{proof}

\section{Examples}\label{Section5}
This section includes examples of the main results in this paper.

\subsection{Examples of Theorem~\ref{Theorem2} and Corollary~\ref{Corollary3}} This subsection pertains to Han's $t$-hook generating functions. For convenience, we define the proportion functions
\begin{equation*}
\Psi_t(a,b;n):=\frac{p_t(a,b;n)}{p(n)}.
\end{equation*}
\begin{example} In the case of $t=3$, we find that
\begin{align*}
H_3(\xi;q)=1+q+2q^2+3\xi q^3+(2+3\xi)q^4+(1+6\xi)q^5+\left(2+9\xi^2\right)q^6+\left(6\xi+9\xi^2\right)q^7\\
+\left(1+3\xi+18\xi^2\right)q^8+\dots.
\end{align*}
and the three generating functions $H_3(a,3;q)$ begin with the terms
\begin{displaymath}
\begin{split}
H_3(0,3;q)&=1+q+2q^2+2q^4+q^5+2q^6+q^8+\dots,\\
H_3(1,3;q)&=3q^3+3q^4+6q^5+6q^7+3q^8+\dots,\\
H_3(2,3;q)&=9q^6+9q^7+18q^8+\dots.
\end{split}
\end{displaymath}
Theorem~\ref{Theorem2} implies (independently of $a$) that
$$
p_3(a,3;n)\sim \frac{1}{12\sqrt{3}n} \cdot e^{\pi \sqrt{\frac{2n}3}} \sim \frac{1}{3}\cdot p(n).
$$
The next table illustrates the conclusion of Corollary~\ref{Corollary3}, that the proportions $\Psi_3(a,b;n)
\to \frac 13.$
\medskip

\begin{center}

\begin{tabular}{|c|cc|cc|cc|}
\hline \rule[-3mm]{0mm}{8mm}
$n$       && $\Psi_3(0,3;n)$           && $\Psi_3(1,3;n)$  & $\Psi_3(2,3;n)$ & \\   \hline 
$ 100$ &&  $\approx 0.4356$ && $\approx 0.1639$ & $\approx 0.4003$ & \\
$\ \ \vdots \ \ $ && \vdots &&$\vdots$ &  $\vdots$ & \\
$ 500$ &&  $\approx 0.3234$ && $\approx 0.3670$ & $\approx 0.3096$ & \\
$ 600$ &&  $\approx 0.3318$ && $\approx 0.3114$ & $\approx 0.3567$ & \\
$\ \ \vdots \ \ $ && \vdots &&$\vdots$ &  $\vdots$ & \\
$ 2100$ &&  $\approx 0.3320$ && $\approx 0.3348$ & $\approx 0.3332$ & \\
$ 2300$ &&  $\approx 0.3330$ && $\approx 0.3345$ & $\approx 0.3325$ & \\
$ 2500$ &&  $\approx 0.3324$ && $\approx 0.3337$ & $\approx 0.3339$ & \\
\hline
\end{tabular}
\end{center}
\end{example}
\medskip

\begin{example} We consider a typical case where the modular sums of $t$-hook functions are not equidistributed. 
We consider $t=2$, where we have
\begin{align*}
H_2(\xi;q)=1+q+2\xi q^2+(1+2\xi)q^3+5\xi^2 q^4 +\left(2\xi+5\xi^2\right)q^5+\left(1+10\xi^3\right)q^6+\left(5\xi^2+10\xi^3\right)q^7\\
+\left(2\xi +20\xi^4\right)q^8+\dots.
\end{align*}
The three generating functions $H_2(a,3;q)$ begin with the terms
\begin{displaymath}
\begin{split}
H_2(0,3;q)&=1+q+q^3+11q^6+10q^7+\dots,\\
H_2(1,3;q)&=2q^2+2q^3+2q^5+22q^8+\dots,\\
H_2(2,3;q)&=5q^4+5q^5+5q^7+\dots.
\end{split}
\end{displaymath}
Theorem~\ref{Theorem2} implies that
$$
p_2(a,3;n)\sim \frac{A(a,n)}{12\sqrt{3}n}\cdot  e^{\pi \sqrt{\frac{2n}{3}}} \sim \frac{A(a,n)}{3} \cdot p(n),
$$
where $A(a,n)\in \{0, 1, 2\}$ satisfies the congruence $A(a,n)\equiv 2-a-n\pmod 3.$
This explains the uneven distribution established by
Corollary~\ref{Corollary3} in this case. 
In particular, we have that
$$
\lim_{n\rightarrow \infty}\frac{p_t(a,3; 3n+2-a)}{p(n)}=0.
$$
 Of course, this zero distribution is weaker than the vanishing obtained in Theorem~\ref{Vanishing}.
 
\noindent
The next table illustrates the uneven asymptotics for $n\equiv 0\pmod 3.$
\medskip

\begin{center}

\begin{tabular}{|c|cc|cc|cc|}
\hline \rule[-3mm]{0mm}{8mm}
$n$       && $\Psi_2(0,3;n)$           && $\Psi_2(1,3;n)$  & $\Psi_2(2,3;n)$ & \\   \hline 
$ 300$ &&  $\approx 0.7347$ && $\approx 0.2653$ & $0$ & \\
$\ \ \vdots \ \ $ && \vdots &&$\vdots$ &  $\vdots$ & \\
$ 600$ &&  $\approx 0.6977$ && $\approx 0.3022$ & $0$ & \\
$ 900$ &&  $\approx 0.6837$ && $\approx 0.3163$ & $0$ & \\
$\ \ \vdots \ \ $ && \vdots &&$\vdots$ &  $\vdots$ & \\
$ 4500$ &&  $\approx 0.6669$ && $\approx 0.3330$ & $0$ & \\
$ 4800$ &&  $\approx 0.6669$ && $\approx 0.3330$ & $0$ & \\
$ 5100$ &&  $\approx 0.6668$ && $\approx 0.3331$ & $0$ & \\
\hline
\end{tabular}
\end{center}
\end{example}
\medskip

\begin{example} We consider another typical case where the modular sums of $t$-hook functions are not equidistributed. 
We consider $t=4$, where we have
$$
H_4(\xi;q)=1+q+2q^2+3q^3+(1+4\xi)q^4+(3+4\xi)q^5+(3+8\xi)q^6+(3+12\xi)q^7+\left(4+4\xi+14\xi^2\right)q^8+\dots.
$$
The three generating functions $H_4(a,3;q)$ begin with the terms
\begin{displaymath}
\begin{split}
H_4(0,3;q)&=1+q+2q^2+3q^3+q^4+3q^5+3q^6+3q^7+4q^8+\dots,\\
H_4(1,3;q)&=4q^4+4q^5+8q^6+12q^7+4q^8+\dots,\\
H_4(2,3;q)&=14q^8+\dots.
\end{split}
\end{displaymath}
Theorem~\ref{Theorem2}, restricted to partitions of integers which are multiples of 12, gives 
$$
p_4(a,3;12n)\sim \begin{cases} \frac{4}{9}\cdot p(12n)\ \ \ \ \ &{\text {\rm if $a=0$,}}\\
\frac{1}{3}\cdot p(12n) \ \ \ \ \ &{\text {\rm if $a=1,$}}\\
\frac{2}{9}\cdot p(12n) \ \ \ \ \ &{\text {\rm if $a=2$.}}
\end{cases}
$$
\noindent
The next table illustrates these asymptotics.
\medskip

\begin{center}

\begin{tabular}{|c|cc|cc|cc|}
\hline \rule[-3mm]{0mm}{8mm}
$n$       && $\Psi_4(0,3;12n)$           && $\Psi_4(1,3;12n)$  & $\Psi_4(2,3;12n)$ & \\   \hline 
$ 10$ &&  $\approx 0.4804$ && $\approx 0.3373$ & $\approx 0.1823$ & \\
$\ \ \vdots \ \ $ && \vdots &&$\vdots$ &  $\vdots$ & \\
$ 50$ &&  $\approx 0.4500$ && $\approx 0.3381$ & $\approx 0.2119$ & \\
$ 60$ &&  $\approx 0.4485$ && $\approx 0.3373$ & $\approx 0.2142$ & \\
$\ \ \vdots \ \ $ && \vdots &&$\vdots$ &  $\vdots$ & \\
$ 180$ &&  $\approx 0.4447$ && $\approx 0.3340$ & $\approx 0.2212$ & \\
$ 190$ &&  $\approx 0.4447$ && $\approx 0.3339$ & $\approx 0.2214$ & \\
$ 200$ &&  $\approx 0.4446$ && $\approx 0.3338$ & $\approx 0.2215$ & \\
\hline
\end{tabular}
\end{center}

\end{example}

\subsection{Examples of Theorem~\ref{Theorem4} and Corollary~\ref{Corollary5}}
Finally, we consider examples of the asymptotics and distributions in the setting of Hilbert schemes on $n$ points.

\smallskip
\begin{example} By G\"ottsche's Theorem (i.e., Theorem~\ref{HilbertGenFcn}), we have
\begin{displaymath}
\begin{split}
G(T;q)&:=\sum_{n=0}^{\infty} P\left(\left(\C^2\right)^{[n]};T \right)q^n=\prod_{m=1}^{\infty}\frac{1}{1-T^{2m-2}q^m}=\frac{1}{F_3(T^{-2}; T^2q)}\\
&=1+q+\left(1+T^2\right)q^2+\left(1+T^2+T^4\right)q^3+\left(1+T^2+2T^4+T^6\right)q^4+\dots. \\
\end{split}
\end{displaymath}
Theorem~\ref{Theorem4} (1) implies that
$$
B\left(a,3; \left(\C^2\right)^{[n]}\right) \sim \frac{1}{12\sqrt{3}n}\cdot  e^{\pi \sqrt{\frac{2n}{3}}},
$$
and so Corollary~\ref{Corollary5} implies that $\delta(a,3;n)\to \frac 13$. The next table illustrates
this phenomenon.
\medskip

\begin{center}

\begin{tabular}{|c|cc|cc|cc|}
\hline \rule[-3mm]{0mm}{8mm}
$n$       && $\delta(0,3;n)$           && $\delta(1,3;n)$  & $\delta(2,3;n)$ & \\   \hline 
$ 1$ &&  $1$ && $0$ & $0$ & \\
$ 2$ &&  $0.5000$ && $0$ & $0.500$ &\\
$\ \ \vdots \ \ $ && \vdots &&$\vdots$ &  $\vdots$ & \\
$ 18$ &&  $\approx 0.3377$ && $\approx 0.3325$ & $\approx 0.3299$ & \\
$ 19$ &&  $\approx 0.3367$ && $\approx 0.3306$ & $\approx 0.3327$ & \\
$ 20$ &&  $\approx 0.3333$ && $\approx 0.3317$ & $\approx 0.3349$ & \\
\hline
\end{tabular}
\end{center}
\medskip

\end{example}

\begin{example} By Theorem~\ref{QuasiHilbertGenFcn}, for $\alpha=2$ and $\beta=3$ we have
\begin{displaymath}
\begin{split}
G_{2,3}(T;q)&:=\sum_{n=0}^{\infty} P\left(\left(\left(\C^2\right)^{[n]}\right)^{T_{2,3}};T\right)q^n=
\frac{1}{F_1(T^2;q^{5})} \prod_{m=1}^{\infty}\frac{\left(1-q^{5m}\right)}{1-q^m}\\
&=1+q+2q^2+\dots+\left(6+T^2\right)q^5+\left(10+T^2\right)q^6+\left(13+2T^2\right)q^7+\dots.
\end{split}
\end{displaymath}
Theorem~\ref{Theorem4} (2) implies that
$$
	B\left(a,3;  \left(\left (\C^2\right)^{[n]}\right)^{T_{\alpha,\beta}}\right)\sim \frac{1}{12\sqrt{3}n} \cdot e^{\pi \sqrt{\frac{2n}{3}}},
$$
and so Corollary~\ref{Corollary5} yields that $\delta_{2,3}(a,3;n)\to \frac 13$. The next table illustrates
this phenomenon.
\medskip
\begin{center}

\begin{tabular}{|c|cc|cc|cc|}
\hline \rule[-3mm]{0mm}{8mm}
$n$       && $\delta_{2,3}(0,3;n)$           && $\delta_{2,3}(1,3;n)$  & $\delta_{2,3}(2,3;n)$ & \\   \hline 
$ 1$ &&  $1$ && $0$ & $0$ & \\
$ 2$ &&  $1$ && $0$ & $0$ &\\
$\ \ \vdots \ \ $ && \vdots &&$\vdots$ &  $\vdots$ & \\
$ 100$ &&  $\approx 0.3693$ && $\approx 0.2658$ & $\approx 0.3649$ & \\
$ 200$ &&  $\approx 0.3343$ && $\approx 0.3176$ & $\approx 0.3481$ & \\
$ 300$ &&  $\approx 0.3313$ && $\approx 0.3293$ & $\approx 0.3393$ & \\
$ 400$ &&  $\approx 0.3318$ && $\approx 0.3324$ & $\approx 0.3358$ & \\
$ 500$ &&  $\approx 0.3324$ && $\approx 0.3332$ & $\approx 0.3343$ & \\
\hline
\end{tabular}
\end{center}

\end{example}

\end{document}